\definecolor{red}{rgb}{0.9,0.3,0.3}
\newcommand{\Cyl}[1]{\ensuremath{\ldbrack{#1}\rdbrack}}
\newcommand{\seq}[1]{{\left\langle{#1}\right\rangle}}
\newcommand{\uhr}[1]{\! \upharpoonright_{#1}} 
\newcommand{\rest}{\uhr}
\newcommand{\andd}{\,\,\,\&\,\,\,}
\DeclareMathOperator \dom{dom}
\newcommand \conc{\widehat{\phantom{\alpha}}}
\newcommand{\cero}{\mathbf{0}}
\newcommand{\converge}{\!\!\downarrow}
\newcommand{\diverge}{\!\!\uparrow}
\newcommand{\w}{\omega}
\newcommand \s{\sigma}
\renewcommand \leq {\leqslant}
\renewcommand \geq {\geqslant}
\renewcommand \phi{\varphi}
\newcommand{\nbd}{\nobreakdash-\hspace{0pt}}
\newcommand{\comment}[1]{}
\newcommand \fin{\texttt{fin}}
\newcommand \inff{\texttt{inf}}
\newcommand \ZZ{\mathcal Z}
\newcommand \YY{\mathcal Y}
\newcommand \XX{\mathcal X}
\newcommand \WW{\mathcal W}
\newcommand \VV{\mathcal V}
\newcommand \EE{\mathcal E}
\newcommand \CC{\mathcal C}
\newcommand \UU{\mathcal U}
\newcommand \BB{\mathcal B}
\renewcommand \SS{\mathcal S}
\newcommand \leb{\lambda}
\newcommand \DemBLR{\textup{Demuth}_{\textup{BLR}}}
\theoremstyle{plain}
\newtheorem{theorem}{Theorem}[section] 
\newcounter{claimCounter}[theorem]
\newtheorem{lemma}[theorem]{Lemma} 
\newtheorem{cclaim}[theorem]{Claim}
\newtheorem{corollary}[theorem]{Corollary} 
\newtheorem{claim}[claimCounter]{Claim}
\theoremstyle{definition}
\newtheorem{definition}[theorem]{Definition}
\theoremstyle{remark}
\title{Strong Jump Traceability and Demuth Randomness}
\author{Noam Greenberg}
\address{School of Mathematics, Statistics and Operations Research, Victoria University of Wellington,
  Wellington, New Zealand}
\email{\href{mailto:Noam.Greenberg@msor.vuw.ac.nz}%
{Noam.Greenberg@msor.vuw.ac.nz}}
\urladdr{\url{http://homepages.mcs.vuw.ac.nz/~greenberg/}}
\author{Daniel D. Turetsky}
\address{School of Mathematics, Statistics and Operations Research, Victoria University of Wellington,
  Wellington, New Zealand}
\email{\href{mailto:dan.turetsky@msor.vuw.ac.nz}{dan.turetsky@msor.vuw.ac.nz}}
\urladdr{\url{http://msor.victoria.ac.nz/Main/DanTuretsky}}
\thanks{Both authors were supported by the Marsden Fund of New Zealand, the second author as a postdoctoral fellow.}
\begin{document}

\begin{abstract}
We solve the covering problem for Demuth randomness, showing that a computably enumerable set is computable from a Demuth random set if and only if it is strongly jump-traceable. We show that on the other hand, the class of sets which form a base for Demuth randomness is a proper subclass of the class of strongly jump-traceable sets. 
\end{abstract}

\maketitle

%%%%%%%%%%%%%%%%%%%%%%%%%%%%%%%%%%%
%%%%%%%%%%%%%%%%%%%%%%%%%%%%%%%%%%%
\section{Introduction}
%%%%%%%%%%%%%%%%%%%%%%%%%%%%%%%%%%%
%%%%%%%%%%%%%%%%%%%%%%%%%%%%%%%%%%%

Hirschfeldt, Nies and Stephan \cite{HirschfeldtNiesStephan:UsingRandomOracles} showed that every computably enumerable (c.e.) set which is computable in an incomplete Martin-L\"of random set is $K$-trivial. The question whether the converse holds is known as the \emph{covering problem} in algorithmic randomness. To date, this problem remains open, and is considered one of the major open problems in the field; see \cite{Miller_Nies:_randomness_open_questions}. 

This question lies at the heart of the study of the relationship between algorithmic randomness and the Turing degrees. The origin of this research programme can be traced back to Ku\v{c}era's \cite{Kucera:86}, in which he showed that every $\Delta^0_2$ Martin-L\"of random set computes a noncomputable c.e.\ set; this allowed him to use the low basis theorem to provide an injury-free solution to Post's problem. In general, researchers study the distribution of the random sets in the Turing degrees, and in particular how these random degrees fit in with other classes of degrees which are examined by classical computability theory, prime among them being the class of c.e.\ degrees. Since incomplete c.e.\ sets cannot compute random sets, the natural question to ask is: which random sets compute which c.e.\ sets? The covering problem is one instance of this question, fixing the notion of randomness to be incomplete Martin-L\"of randomness. A positive solution to the covering problem would give us a new characterisation of $K$-triviality, which is the central \emph{lowness notion} of algorithmic randomness. 

\emph{Strong jump-traceability}, introduced by Figueira, Nies and Stephan in \cite{FigueiraNiesStephan}, is another lowness notion of c.e.\ degrees. A lowness notion defines a class of sets which resemble the computable sets in some way, and thus tells us that they are far from being complete. Like other variants of traceability, strong jump-traceability is a combinatorial notion, defined without reference to prefix-free Kolmogorov complexity or Lebesgue measure, and yet interacts with notions from algorithmic randomness. It resembles $K$-triviality: Cholak, Downey and Greenberg \cite{CholakDowneyGreenberg} showed that the strongly jump-traceable c.e.\ degrees form an ideal, properly contained in the ideal of $K$-trivial degrees. Nies and Greenberg showed \cite{GNies} that like the $K$-trivial sets, c.e.\ strongly jump-traceable sets are characterised as those sets that have nice approximations, obeying the so-called \emph{benign cost functions}. Greenberg, Hirschfeldt and Nies \cite{GHNies} then used this characterisation to show that in some sense, the c.e.\ strongly jump-traceable sets behave more nicely that the $K$-trivial sets, since they have both ``continuous'' and ``discrete'' definitions: a c.e.\ set is strongly jump-traceable if and only if it is computable from \emph{all} superlow Martin-L\"of random sets (and in fact, if and only if it is computable from all superhigh Martin-L\"of random sets). This was the first instance of a definition of a class of c.e.\ degrees using their interaction with random sets. 

After Greenberg \cite{G} constructed a $\Delta^0_2$ Martin-L\"of random set which only computes strongly jump-traceable c.e.\ sets, Ku\v{c}era and Nies \cite{KuceraNies} showed that any c.e. set computable from any \emph{Demuth} random set is strongly jump-traceable. Demuth randomness was introduced by Demuth \cite{Demuth_classes_of_arithmetical,Demuth:88} in order to study differentiability of constructive functions; he showed that every constructive function satisfies the Denjoy alternative at any Demuth random real (the converse is still open, but it is known that some strengthening of Martin-L\"of randomness is required; see for example \cite{Demuth:Denjoy}). Demuth randomness is a strengthening of Martin-L\"of randomness which has some nice properties which resemble Cohen 1-genericity: it implies generalised lowness (and so in particular incompleteness), but unlike weak 2-randomness is compatible with being $\Delta^0_2$. 

Ku\v{c}era's and Nies's result, much like the Hirschfeldt-Nies-Stephan result mentioned above, raises the question of whether the converse holds. This is the variant of the covering problem for Demuth randomness. In this paper, we provide a positive solution to this problem.

\begin{theorem}\label{thm:SJT Below Demuth}
	A c.e.\ set is strongly jump-traceable if and only if it is computable from some Demuth random set. 
\end{theorem}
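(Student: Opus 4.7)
Since the reverse implication is established by Ku\v{c}era and Nies, we focus on the nontrivial direction: given a c.e.\ strongly jump-traceable set $A$, we construct a Demuth random set $Y$ with $Y \geq_T A$. The plan is to leverage the Greenberg-Nies characterization stated in the introduction: $A$ obeys every benign cost function. We design a particular benign cost function $c$, extracted from the structure of all possible Demuth tests, so that any computable enumeration of $A$ obeying $c$ drives a Ku\v{c}era-style coding of $A$ into a Demuth random real.

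The construction takes place inside a $\Pi^0_1$ class $P$ of Martin-L\"of randoms with $\mu(P)$ close to $1$. We build a $\Delta^0_2$ approximation $\seq{Y_s}$ to a real $Y = \lim_s Y_s$, maintaining $Y_s \in P$ at all stages. The coding is Ku\v{c}era-style: when the least element changing in $A$ at stage $s$ is $n$, we shift $Y_s$ within $P$ by an amount controlled by a function of $n$, so that $A \uhr n$ can be read off from $Y$ by a fixed reduction. Since $P$ consists of Martin-L\"of randoms, the limit $Y$ is automatically Martin-L\"of random; the essential work is to ensure that $Y$ escapes every Demuth test.

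To define $c$, fix a uniform listing of all candidate Demuth tests $\seq{\mathcal{V}^e : e \in \w}$, each $\mathcal{V}^e = \seq{V^e_k : k \in \w}$ with $\mu(V^e_k) \leq 2^{-k}$ and with indices given by $\w$-c.e.\ functions having computable revision bounds $g_e(k)$. For $n,s \in \w$, let $c(n,s)$ aggregate a weighted measure of test components seen up to stage $s$ at levels $k \geq n$ across tests $e < n$. The benignness of $c$ reduces to the observation that, for fixed $n$, the number of stages at which $c(n,s)$ crosses the threshold $2^{-n}$ is bounded by a computable function of $n$, since each $V^e_k$ can be revised at most $g_e(k)$ times. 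Using an enumeration of $A$ obeying $c$, the construction at stage $s$ moves $Y_s$ within $P$ just enough to exit the currently active test components, financing the displaced measure from the $A$-change at stage $s$.

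The main obstacle is the adaptive $\w$-c.e.\ nature of Demuth tests: an adversary may withdraw and replace a test component finitely many times, each time forcing further motion of $Y_s$. The cost function must encode this adaptiveness without losing benignness, which requires the revision bounds $g_e$ to appear inside $c$ in a controlled way. A related subsidiary difficulty is that each shift of $Y_s$ must land in $P$ minus the currently active part of every Demuth test, so the total measure of the ``banned'' regions across the entire construction must remain less than $\mu(P)$. Once $c$ is shown to be benign, the coding construction proceeds by standard SJT techniques and yields a Demuth random $Y$ with $A \leq_T Y$, completing the theorem.
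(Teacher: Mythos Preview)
Your proposal has a genuine gap at its central step: the cost function $c$ you describe cannot be shown to be benign in the way you claim. In any effective listing of candidate Demuth tests, the revision bounds $g_e$ are only \emph{partial} computable functions; the $e$th candidate is a genuine Demuth test precisely when $g_e$ is total. Your benignness argument needs, for each $n$, a computable bound on the number of threshold crossings, and you say this comes from summing the $g_e(k)$ over the finitely many relevant $(e,k)$. But we cannot compute that sum: we do not know which $g_e(k)$ converge, and for those that do, we have no a priori bound on their values. With the usual convention that $V^e_k=\emptyset$ whenever $g_e(k)\diverge$, the cost $c(n,s)$ is well-defined, but the witnessing bound for benignness is not computable. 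No single benign cost function encodes ``all Demuth tests'' for exactly this reason.

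The paper's proof confronts precisely this obstacle. Rather than a single cost function, it runs a tree of strategies in which nodes of length $e$ guess whether $g^e$ is total; along each path a \emph{different} order function $h_\sigma$ is defined, depending on the $g^\rho$ believed total along that path, and the strong jump-traceability of $A$ is invoked separately for each such $h_\sigma$. This is why the introduction stresses that the argument ``uses the full power of strong jump-traceability, rather than an approximation in the form of $h$-jump-traceability for some sufficiently slow growing order function $h$,'' and why ``a general argument in the style of [GHNies] is impossible here.'' Your proposal is exactly a GHNies-style argument. A secondary warning sign: your Ku\v{c}era-coding produces a $\Delta^0_2$ real, whereas the paper's construction yields a $\Delta^0_3$ Demuth random and explicitly leaves the $\Delta^0_2$ case open; if the cost-function route worked, that question would not be open.
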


The proof of Theorem \ref{thm:SJT Below Demuth} is involved, combining novel techniques with the \emph{box-promotion} method use in the investigation of strongly jump-traceable sets. This is the first example using the full power of strong jump-traceability, rather than an approximation in the form of $h$-jump-traceability for some sufficiently slow growing order function $h$. A general argument in the style of \cite{GHNies} is impossible here, since no $\Delta^0_2$ Demuth random set computes all strongly jump-traceable c.e.\ sets. The Demuth random set constructed computing a given c.e., strongly jump-traceable set is $\Delta^0_3$; it remains open whether it can be made $\Delta^0_2$. 

\

Being a \emph{base} for a notion of randomness is a lowness notion emanating from the interplay of randomness and Turing reducibility. If $\mathcal R$ is a relativisable class of randomness, then we say that a set $A$ is a base for $\mathcal R$ if there is some $X\in {\mathcal R}^A$ which computes $A$. That is, $A$ resembles the computable sets in that the cone of degrees above $A$, while being null, nevertheless intersects an $A$-definable conull class, namely ${\mathcal R}^A$. The robustness of the class of $K$-trivial degrees is witnessed by its coincidence with the class of bases for Martin-L\"of randomness (Hirchfeldt, Nies and Stephan \cite{HirschfeldtNiesStephan:UsingRandomOracles}). Nies \cite{NiesDemuth} showed that every base for Demuth randomness is strongly jump-traceable, and asked if the converse holds. That is, whether Theorem \ref{thm:SJT Below Demuth} can be improved to produce not merely a Demuth random set computing a given strongly jump-traceable set $A$, but indeed a Demuth${^A}$ random set computing $A$. We show that the converse fails, even when restricted to c.e.\ sets. 

\begin{theorem}\label{thm:SJT Not Base for Demuth Randomness}
	There is a strongly jump-traceable c.e.\ set which is not a base for Demuth randomness.
\end{theorem}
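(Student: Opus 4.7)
The plan is to build a c.e.\ set $A$ via a computable enumeration $(A_s)$ and coordinate three kinds of requirements through a priority argument: $Q_i$: $A\neq\Phi_i$, so that $A$ is noncomputable; $P$: $A$ obeys every benign cost function, which by \cite{GNies} yields that $A$ is strongly jump-traceable; and $N_e$: there is a Demuth$^A$-test $V^e=(V^e_n)_n$ such that every $X$ with $\Phi_e^X=A$ lies in cofinitely many $V^e_n$, witnessing that $X$ is not Demuth$^A$-random. Together these give an SJT c.e.\ $A$ with the property that no $X\geq_T A$ is Demuth$^A$-random, which is exactly the negation of being a base for Demuth randomness.

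For the test in $N_e$, my first attempt is to set, at stage $s$,
\[
V^e_{n,s}\;=\;\bigl\{Y\in 2^\omega : \Phi_{e,s}^Y\uhr{k_s(e,n)}\succeq A_s\uhr{k_s(e,n)}\bigr\},
\]
where $k_s(e,n)$ is the least $k$ at which this $\Sigma^0_1(A_s)$-open set has measure at most $2^{-n}$; each time $A$ changes below $k_s(e,n)$, the set is re-indexed as a fresh $\Sigma^0_1(A)$-object. If $\Phi_e^X=A$ and $k^*(e,n)=\lim_s k_s(e,n)$, then once $A\uhr{k^*}$ stabilises and $\Phi_e^X\uhr{k^*}=A\uhr{k^*}$, we have $X\in V^e_n$ permanently, so $X$ belongs to every $V^e_n$, breaking Demuth$^A$-randomness. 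The existence of the threshold $k^*$ itself relies on $A$ being noncomputable, via a standard majority-vote argument on the measure of $\{Y:\Phi_e^Y\uhr k\succeq A\uhr k\}$, guaranteed by the $Q_i$-requirements.

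The main obstacle is verifying that $(V^e_n)_n$ is truly a Demuth$^A$-test: the individual measure bounds are built in, but the number of re-indexings of the $\omega$-c.e.$^A$ index must be bounded by an $A$-computable function of $n$, whereas $k^*(e,n)$ is \emph{a priori} only $A'$-computable. This is precisely where the SJT of $A$ has to be put to work. Using the box-promotion machinery familiar from \cite{CholakDowneyGreenberg,GNies,G}, I would attach to each pair $(e,n)$ a box whose budget is controlled by a benign cost function $c_{e,n}$; any enumeration into $A$ below the current threshold consumes budget, and the benignness of $c_{e,n}$ converts the finite total cost into the $A$-computable change bound required by the definition of a Demuth$^A$-test. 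Coordinating with the positive $Q_i$-requirements is by low-priority permitting in the usual fashion. The delicate technical core of the construction will therefore be selecting the cost functions $c_{e,n}$ and tuning the box-promotion so that the change-count bounds for the tests are uniformly $A$-computable while still permitting enough enumerations into $A$ for noncomputability.
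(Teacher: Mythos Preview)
You correctly isolate the crux: the change-count for each $V^e_n$ must be bounded by an $A$-computable function, while $k^*(e,n)$ is only $A'$-information. But your proposed fix --- that benignness of some $c_{e,n}$ ``converts the finite total cost into the $A$-computable change bound'' --- does not do this. Benignness supplies a \emph{computable} bound on the number of disjoint intervals whose cost exceeds a threshold; obeying the cost function bounds total cost. Neither produces an $A$-computable, non-computable object. Box-promotion controls how often $A$ may change below a marker, but it does not let $A$ read back the number of such changes after the fact. Your test is purely observational (the $N_e$-strategies never enumerate into $A$), so the oracle $A$ carries no information about the approximation of $V^e_n$; there is simply no mechanism by which the bound becomes $A$-computable rather than $A'$-computable.

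The idea you are missing is an \emph{encoding trick}. The paper's $P_{e,m}$-strategy picks its component $n$ together with a fresh coding marker $x$, and provisionally declares the change-bound at $n$ to be $0$. At the next visit --- by which time all higher-priority restraint relevant to $n$ is visible --- it enumerates $x$ into $A$ and simultaneously redefines the bound at $n$ to the now-computable value. Hence the bound function $g$ is $A$-computable: on input $n$, ask whether $x\in A$; if not, output $0$; if so, output the real bound. This is exactly why the theorem separates Demuth$^A$ from Demuth$_{\mathrm{BLR}}^A$. In addition, the paper's strategy is active: it enumerates witnesses $y$ into $A$ to kill off the measure currently computing $A$ correctly, so the ``content'' changes to $\UU_n$ are at most $2^n$; your passive test has no such bound. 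Finally, the paper gets strong jump-traceability by directly building traces for $J^A$, which meshes cleanly with the $P$-strategies; routing through benign cost functions here adds nothing and obscures the interaction.
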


Nies showed that the class of c.e.\ bases for Demuth randomness properly contains a sub-ideal of the c.e.\ jump-traceable sets, namely those c.e.\ sets computable from every $\w^2$-computably approximable Martin-L\"of random sets. Thus, the collection of bases for Demuth randomness forms a new class, about which we know close to nothing. For example, it is not clear if it induces an ideal in the Turing degrees. 

It is easy to prove that every $K$-trivial set is a base for Martin-L\"of randomness, once it is shown that $K$-triviality implies \emph{lowness} for Martin-L\"of randomness. That is, if $A$ is $K$-trivial, then every Martin-L\"of random set is Martin-L\"of random relative to $A$. By the Ku\v{c}era-G\'{a}cs theorem, $A$ is computable from a Martin-L\"of random set $Z$ (indeed every $K$-trivial set is $\Delta^0_2$, so $A$ is computable from Chaitin's $\Omega$), and so $Z$ witnesses that $A$ is a base for Martin-L\"of randomness. A na\"ive attempt to show that every c.e., strongly jump-traceable set is a base for Demuth randomness would start by utilising Theorem \ref{thm:SJT Below Demuth} as an analogue to the Ku\v{c}era-G\'{a}cs theorem, and then go on to show that every strongly jump-traceable set is low for Demuth randomness. Unfortunately, the latter fails. Indeed, Downey and Ng \cite{DowneyNg:LowDemuth} showed that lowness for Demuth randomness implied hyperimmune-freeness, whereas Downey and Greenberg \cite{DGSJT2} showed that every strong jump-traceable set is $\Delta^0_2$, and so the only strongly jump-traceable sets that are low for Demuth are the computable ones. 

The proof of Theorem \ref{thm:SJT Not Base for Demuth Randomness} relies on the fact that the full relativisation of Demuth randomness to an oracle $A$ allows for an $A$-computable bound on the number of mind-changes for the value of the function giving the index for components of a Demuth test. This prompts the definition of a related notion of randomness, $\DemBLR$ randomness, which is a partial relativisation of Demuth randomness, prohibiting this increased bound on the number of mind-changes. This notion is studied in \cite{DemuthBLRpaper}, where in particular it is shown that every strongly jump-traceable set is {low} for $\DemBLR$-randomness. This allows us to resuscitate the na\"ive plan from the previous paragraph and conclude: 

\begin{corollary}\label{cor_SJT_base}
	A c.e.\ set is strongly jump-traceable if and only if it is a base for $\DemBLR$-randomness. 
\end{corollary}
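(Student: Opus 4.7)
The plan is to derive the corollary by pairing Theorem~\ref{thm:SJT Below Demuth} with the cited result from \cite{DemuthBLRpaper}, which says that every strongly jump-traceable set is low for $\DemBLR$-randomness, and then exploiting the fact that $\DemBLR$-randomness sits correctly between (unrelativised) Demuth randomness and the full relativisation Demuth${}^A$.

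For the nontrivial direction (strongly jump-traceable implies base), I would start from a c.e.\ strongly jump-traceable $A$ and apply Theorem~\ref{thm:SJT Below Demuth} to produce a Demuth random $Z\geq_{\textup{T}}A$. In the unrelativised setting, the $\BLR$ functions are precisely the computable approximations with a computable mind-change bound that already govern Demuth tests, so $Z$ is $\DemBLR$-random. Invoking lowness of $A$ for $\DemBLR$-randomness then upgrades this to $Z$ being $\DemBLR^{A}$-random. Together with $A\leq_{\textup{T}} Z$, this exhibits $Z$ as a witness that $A$ is a base for $\DemBLR$-randomness.

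For the reverse direction, suppose $A$ is a c.e.\ base for $\DemBLR$-randomness, witnessed by $Z\geq_{\textup{T}}A$ with $Z$ being $\DemBLR^{A}$-random. The key observation is that every \emph{unrelativised} Demuth test is automatically a $\DemBLR^{A}$ test: a computable mind-change bound is trivially $A$-computable, and $\BLR\subseteq\BLR^{A}$. Hence $Z$ is Demuth random, and since $A$ is c.e.\ and computable from $Z$, the Ku\v{c}era--Nies theorem \cite{KuceraNies} (subsumed by the easy direction of Theorem~\ref{thm:SJT Below Demuth}) yields that $A$ is strongly jump-traceable.

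Once the two big inputs are taken as given, the main obstacle is bookkeeping: checking that the partial relativisation in the definition of $\DemBLR$ really does produce the sandwich $(\DemBLR^{A}\text{-random}) \supseteq (\text{Demuth}^{A}\text{-random})$ on one side, and $(\DemBLR^{A}\text{-random}) \subseteq (\text{Demuth-random})$ on the other. The first inclusion lets the lowness hypothesis feed on a Demuth random produced by Theorem~\ref{thm:SJT Below Demuth}, and the second lets a witness for a $\DemBLR$-base be fed back into the Ku\v{c}era--Nies theorem. Both inclusions are built directly into the definition of $\BLR$, so the verification is routine and the substantive work is entirely hidden inside Theorem~\ref{thm:SJT Below Demuth} and the low-for-$\DemBLR$ theorem of \cite{DemuthBLRpaper}.
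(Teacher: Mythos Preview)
Your proposal is correct and follows essentially the same approach as the paper: the paper's ``proof'' is just the paragraph preceding the corollary, which spells out the forward direction exactly as you do (Theorem~\ref{thm:SJT Below Demuth} supplies a Demuth random $Z\ge_T A$, and lowness for $\DemBLR$ from \cite{DemuthBLRpaper} upgrades this to $\DemBLR^A$-randomness). You are actually more explicit than the paper about the reverse direction, which the paper leaves implicit; your argument there---that every unrelativised Demuth test is a $\DemBLR^A$ test, so a $\DemBLR^A$-random witness is already Demuth random and Ku\v{c}era--Nies applies---is the intended one.
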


We prove Theorem \ref{thm:SJT Below Demuth} in Section \ref{sec:main}, and Theorem \ref{thm:SJT Not Base for Demuth Randomness} in Section \ref{sec:base}. 

\

%%%%%%%%%%%%%%%%%%%%%%%%%%%%%%%%%%%
%%%%%%%%%%%%%%%%%%%%%%%%%%%%%%%%%%%
\section{Definitions of Demuth randomness and other notions}
%%%%%%%%%%%%%%%%%%%%%%%%%%%%%%%%%%%
%%%%%%%%%%%%%%%%%%%%%%%%%%%%%%%%%%%

We first define strong jump-traceability.

\begin{definition}\label{def:sjt} \
	\begin{enumerate}
		\item An \emph{order function} is a computable, nondecreasing and unbounded function $h\colon \w\to \w\setminus\{0\}$. 
		
		\item A \emph{c.e.\ trace} is a uniformly c.e.\ sequence of finite sets. A c.e.\ trace $\seq{T_x}_{x<\w}$ \emph{traces} a partial function $\psi\colon \w\to \w$ if for all $x\in \dom \psi$, $\psi(x)\in T_x$. 
		
		\item If $h$ is an order function, then an $h$-trace is a c.e.\ trace $\seq{T_x}$ such that for all $x<\w$, $|T_x|\le h(x)$. 
		
		\item A set $A$ is \emph{strongly jump-traceable} if for every order function $h$, every $A$-partial computable function $\psi$ is traced by an $h$-trace. 
	\end{enumerate}
\end{definition}

Next, we discuss notation for subsets of Cantor space $2^\w$, and define Demuth randomness. 

\begin{definition}\label{def_omega_ca}
	A sequence of functions $\seq{f_s}_{s<\w}$ is an \emph{approximation} of a function $f\colon \w\to \w$ if for all $n$, for all but finitely many $s$, $f_s(n) = f(n)$. We often write $f(n,s)$ for $f_s(n)$. A \emph{computable approximation} is a uniformly computable sequence which is an approximation. Shoenfield's limit lemma says that a function has a computable approximation if and only if it is computable from $\cero'$. 
	
	If $\seq{f_s}$ is an approximation, then the associated \emph{mind-change} function $m_{\seq{f_s}}$ is defined by
	\[ m_{\seq{f_s}}(n)  = \# \left\{ s\,:\, f_{s+1}(n) \ne f_s(n) \right\} .\]
	A computable approximation $\seq{f_s}$ is an \emph{$\w$-computable approximation} if $m_{\seq{f_s}}$ is bounded by a computable function. A function is \emph{$\w$-computably approximable} (or \emph{$\w$-c.a.}) if it has an $\w$-computable approximation. 	
\end{definition}

\begin{definition}\label{def_effectively_open}
	For a finite binary string $\s\in 2^{<\w}$, we let $\Cyl{\s}$, the clopen subset defined by $\s$, be the collection of reals $X\in 2^\w$ which extend $\s$. If $W$ is a set of strings, then 
	\[ \Cyl{W} = \bigcup_{\s\in W} \Cyl{\s}\]
	is the open (or $\mathbf{\Sigma}^0_1$) subset of $2^\w$ defined by $W$. If $W$ is c.e., then $\Cyl{W}$ is called \emph{effectively open} (or $\Sigma^0_1$). By compactness, a subset $\VV$ of Cantor space is clopen if and only if $\VV = \Cyl{D}$ for some finite subset $D$ of $2^{<\w}$. 
	
	If $\WW = \Cyl{W}$ and $\seq{W_s}$ is an effective enumeration of the c.e.\ set $W$, then we often write $\WW_s$ for $\Cyl{W_s}$. We call $\seq{\WW_s}$ an \emph{effective enumeration} of $\WW$.
\end{definition}

We interrupt the stream of definitions to remark that we will be using Lachlan's notation \cite{Lachlan_nonbounding} of appending the stage in square brackets to a complicated expression to indicate that every element of the expression is intended to be evaluated at that stage. For example, if $\seq{f_s}$ is a computable approximation of a function $f$, and $\VV_s$ is an effective enumeration of $\VV$, then we write $\VV \cup \Cyl{W_{f(n)}}\,[s]$ rather than $\VV_s \cup \Cyl{W_{f_s(n),s}}$.

\begin{definition}\label{def_open_test}
	A \emph{test} is a sequence $\seq{\VV_n}_{n<\w}$ of open subsets of Cantor space $2^\w$ such that for all $n$, $\leb(\VV_n)\le 2^{-n}$; here $\leb$ denotes the fair coin measure on Cantor space. We say that a set $X\in 2^\w$ \emph{passes} the test $\seq{\VV_n}$ if $X\in \VV_n$ for only finitely many $n$. Otherwise, the set $X$ \emph{fails} the test. The collection of sets which fail a test is a null class. 
	
	A test $\seq{\VV_n}$ is \emph{effectively open} if each $\VV_n$ is an effectively open subset of Cantor space. If $\seq{\VV_n}$ is effectively open, then an \emph{index function} for $\seq{\VV_n}$ is a function $f\colon \w\to \w$ such that for all $n$, $\VV_n = \Cyl{W_{f(n)}}$; here $\seq{W_e}$ is an effective list of all c.e.\ sets.  Thus, for example, an effectively open test is a Martin-L\"of test if it has a computable index function. A \emph{Demuth test} is an effectively open test which has an $\w$-c.a.\ index function. A set $X\in 2^\w$ is \emph{Demuth random} if it passes all Demuth tests. 	
\end{definition}

Rather than working with Demuth tests, it will be convenient to work with a more restrictive (yet equally powerful) notion of tests.

\begin{definition}\label{def_Kutrz_test}
	A test $\seq{\VV_n}$ is \emph{clopen} if each $\VV_n$ is a clopen subset of $2^\w$. If $\seq{\VV_n}$ is a clopen test, then a \emph{clopen index function} for $\seq{\VV_n}$ is a function $f\colon \w\to \w$ such that for all $n$, $\VV_n = \Cyl{D_{f(n)}}$; here $\seq{D_e}$ is an effective list of all finite sets of strings. Thus, for example, a Kurtz test is a clopen test which has a computable clopen index function. A \emph{Demuth clopen test} is a clopen test which has an $\w$-c.a.\ clopen index function. 
\end{definition}

The following lemma is implicit in \cite{HoelzlKraelingStephanWu}. We give a proof for completeness. 

\begin{lemma}\label{lem:clopen tests}
A set $X\in 2^\w$ is Demuth random if and only if it passes every clopen Demuth test. 
\end{lemma}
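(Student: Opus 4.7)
One direction is immediate: every Demuth clopen test is a Demuth test, since the map from a clopen index $h$ to an index $f$ of the c.e.\ set $W_{f(n)} = D_{h(n)}$ is computable and preserves the $\w$-c.a.\ property. Hence a Demuth random set passes every Demuth clopen test.

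For the converse, suppose $X$ fails a Demuth test $\seq{\VV_n} = \seq{\Cyl{W_{f(n)}}}$, whose $\w$-c.a.\ index $f$ has an approximation $f_s$ with mind changes bounded by a computable $g$. The strategy is an iterative clopen approximation of each $\VV_n$ from within, with geometric decay of the residual measure. Set $\VV_n^{(0)} = \VV_n$; inductively, assuming $\VV_n^{(i)}$ is effectively open, has $\w$-c.a.\ index, and satisfies $\leb(\VV_n^{(i)}) \leq 2^{-n-2i-1}$, define $\UU_n^{(i)} = \VV_n^{(i)}[t_n^{(i)}]$, where $t_n^{(i)}$ is the first stage at which the clopen approximation $\VV_n^{(i)}[t_n^{(i)}]$ has captured all but $2^{-n-2(i+1)-1}$ of the measure of $\VV_n^{(i)}$. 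Approximating by $t_n^{(i)}[s] = \min\{t \leq s : \leb(\VV_n^{(i)}[t]) \geq \leb(\VV_n^{(i)}[s]) - 2^{-n-2(i+1)-1}\}$, the mind changes of $t_n^{(i)}$ are $O(1)$ on top of those inherited from $f$ and the previous iterations, since $\leb(\VV_n^{(i)})$ can cross a new threshold of size $2^{-n-2(i+1)-1}$ only boundedly often. The residual $\VV_n^{(i+1)} = \VV_n^{(i)} \setminus \UU_n^{(i)}$ is again effectively open with $\w$-c.a.\ index (difference of open and clopen is open), with measure at most $2^{-n-2(i+1)-1}$, sustaining the induction.

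These clopen sets are then combined via diagonal indexing: $\UU_m = \bigcup_{n+i = m} \UU_n^{(i)}$, a finite union of clopen sets. The measure satisfies $\leb(\UU_m) \leq \sum_{i=0}^{m} 2^{-(m-i)-2i-1} = 2^{-m-1}\sum_{i\geq 0} 2^{-i} \leq 2^{-m}$, and the clopen index of $\UU_m$ is $\w$-c.a.\ because only finitely many $\w$-c.a.\ components contribute to each level. For $X$ failing $\seq{\VV_n}$ and each $n$ with $X \in \VV_n$, either $X \in \UU_n^{(i)}$ for some $i$---in which case $X \in \UU_{n+i}$, at a level tending to infinity with $n$---or $X$ lies in the measure-zero intersection $\bigcap_i \VV_n^{(i)}$.

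The main obstacle I anticipate is this latter residual case, where $X$ is never captured at level $n$. For such $n$, $\seq{\VV_n^{(i)}}_i$ (with $n$ fixed) is itself a Demuth test --- its $\w$-c.a.\ indices compose the mind changes through the iteration --- which $X$ fails; one recursively applies the same construction to these subtests and merges the countably many resulting Demuth clopen tests $\seq{\UU^{(k)}_m}$ into one via the standard level-shift $\UU^*_m = \bigcup_{k \leq m} \UU^{(k)}_{m+k+1}$, a finite union of clopen sets with $\leb(\UU^*_m) \leq \sum_k 2^{-(m+k+1)} \leq 2^{-m}$. Verifying that the $\w$-c.a.\ character of all the stage choices and indices survives the nested iteration is the routine technical bookkeeping.
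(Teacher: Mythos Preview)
Your approach has a genuine gap in the handling of the residual sets. You correctly identify that $X$ might lie in $\bigcap_i \VV_n^{(i)}$ for some (or all) $n$ with $X \in \VV_n$, but the proposed recursive fix is circular. When you apply the same construction to the test $\seq{\VV_n^{(i)}}_i$, you produce a new clopen test together with a new family of residuals, and the same dichotomy recurs; nothing forces $X$ out of the residual at any finite depth of the recursion. The clopen approximants $\UU_n^{(i)}$ are chosen by measure alone, independently of $X$, so a point $X$ whose small neighbourhoods appear in each $\VV_n^{(i)}$ only after the bulk of the measure has been enumerated will sit in every residual at every level. Merging the countably many clopen tests arising along the recursion tree does not help: if $X$ fails none of them individually, it need not fail their merge.

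There is also a smaller technical slip: with your stated formula $t_n^{(i)}[s] = \min\{t \le s : \leb(\VV_n^{(i)}[t]) \ge \leb(\VV_n^{(i)}[s]) - 2^{-n-2(i+1)-1}\}$, the value $t_n^{(i)}[s]$ can increment at every stage (consider measure growing in tiny increments), so the resulting clopen index is not $\omega$-c.a. Your parenthetical justification about crossing thresholds is the right idea but does not match the formula; you would need to discretise, updating only when the current measure passes a new integer multiple of the threshold.

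The paper's proof sidesteps the residual problem by a different mechanism. Rather than iteratively approximating each $\VV_n$ and chasing residuals, it defines all the $\UU_m$ simultaneously by a cascade: at each stage, measure in $\VV_n$ not yet absorbed into $\UU_n$ is passed down to $\UU_{n+1}, \UU_{n+2}, \ldots$, and $\UU_m$ is updated only when the measure passed to it exceeds the threshold $\epsilon(m) = 2^{-m}$. Because these thresholds tend to zero, every $X \in \VV_n$ is absorbed at \emph{some} finite level $m > n$; one argues that the least such $m$ at stage $s$ is nonincreasing in $s$ and hence stabilises. This is exactly the missing idea: instead of leaving a residual at each level, push it down to a level where the threshold is small enough to catch it.
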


\begin{proof}
Every clopen Demuth test is a Demuth test. Hence every Demuth random set passes every clopen Demuth test. 

For the converse, we show that for any Demuth test $\seq{\VV_n}$ there is a clopen Demuth test $\seq{\UU_n}$ such that every set which fails the test $\seq{\VV_n}$ also fails the test $\seq{\UU_n}$. Let $\seq{\VV_n}$ be a Demuth test.  

The idea is to copy $\bigcup_n \VV_{n}$ into various $\UU_n$'s in discrete steps. For each $\UU_n$, we set a threshold $\epsilon(n)$. We then copy $\VV_{n}$ into $\UU_{n}$ only at stages at which the measure of $\VV_n$ passes some integer multiple of $\epsilon(n)$. At other stages, the part of $\VV_{{n}}$ which hasn't yet been copied to $\UU_n$ is split up and copied to $\UU_m$ for various $m>n$, depending on the measure of that part and its relation to the thresholds $\epsilon(m)$. At a later stage, if the measure of $\VV_{n}$ crosses another integer multiple of $\epsilon(n)$, we recall that part of $\VV_{n}$ which has been passed to $\UU_m$ for $m>n$, and copy it to $\UU_n$. Because $\epsilon(n)$ is fixed, $\UU_n$ is changed only a finite number of times, and so $\UU_n$ is clopen. 

Actually, this description is not quite correct, because we can set $\epsilon(n)$ to be greater than $\leb(\VV_n)$, so $\leb(\VV_{n,s})$ never crosses an integer multiple of $\epsilon(n)$. What we in fact track, when defining $\UU_n$, is the total measure of the parts of $\VV_{k,s}$ for $k<n$ which are passed down to $\UU_n$. 

To assist with the construction, we will define auxiliary clopen sets~$\seq{\SS_{n,s}}$. These consist of the measure passed on to $\UU_{n}$ by $\UU_{n-1}$, together with $\VV_{n,s}$. Let $f$ be an $\w$-c.a.\ index function for $\seq{\VV_n}$, and let $\seq{f_s}$ be an $\w$-computable approximation for $f$. We let $\VV_{n,s} = \Cyl{W_{f_s(n),s}}$. Since each set $W_{e,s}$ is finite, each set $\VV_{n,s}$ is clopen (in fact, a canonical index $d$ such that $W_{f_s(n),s} = D_d$ can be obtained effectively from $n$ and $s$). We may assume that for all $n$ and $s$, $\leb(\VV_{n,s}) \le 2^{-n}$, and that for all $s$, for all $n\ge s$, $\VV_{n,s}=\emptyset$. 

For all $n$, we let $\epsilon(n) = 2^{-n}$.

\subsubsection*{Construction}

At stage 0, we let $\UU_{n,0} = \SS_{n,0} = \emptyset$ for all $n$. At stage~$s>0$ we define~$\SS_{n,s}$ and $\UU_{n,s}$ for all~$n$ by recursion on $n$. We first let $\SS_{0,s} = \emptyset$. Let $n<\w$, and suppose that $\SS_{n,s}$ is already defined. 

If 
\[ \leb\left(\SS_{n,s}\setminus \UU_{n,s-1}\right) > \epsilon(n),\]
then $\UU_{n}$ needs to change; we let $\UU_{n,s} = \SS_{n,s}$, and for all $m> n$ we let $\UU_{m,s} = \SS_{m,s} = \emptyset$. Otherwise, we let $\UU_{n,s} = \UU_{n,s-1}$, let 
\[ \SS_{n+1,s} =  \VV_{n,s} \cup \left(\SS_{n,s}  \setminus \UU_{n,s} \right) ,\]
and proceed to define $\UU_{n+1,s}$.

\subsubsection*{Verification}

For all $n$ and $s$, 
\[ \leb\left(\SS_{n+1,s}\right)\le \epsilon(n) + \leb\left(\VV_{n,s}\right) \le 2^{-n+1}.\] 
Hence, for all $n$ and $s$, $\leb\left( \UU_{n,s}\right)\le 2^{-n+1}$. 

We also see that even though this was not required for the construction to be computable, every stage of the construction is in fact finite. We show, by induction on $s$, that for almost all $n$, $\SS_{n,s} = \UU_{n,s}= \emptyset$. Suppose this holds at stage $s-1$. Suppose, for contradiction, that for infinitely many $n$ we have $\SS_{n,s}\ne \emptyset$; so no $\UU_n$ ``acts'' at stage $s$, and for all $n$ we have $\UU_{n,s}= \UU_{n,s-1}$. Since for all $n\ge s$, $\VV_{n,s}= \emptyset$, and for almost all $n$, $\UU_{n,s-1}$ is empty, for almost all $n$, we have $\SS_{n+1,s} = \SS_{n,s}$; so we are assuming that this stable set is nonempty, and hence has positive measure. Since $\epsilon(n)\to 0$, there is some $n$ such that $\epsilon(n)<\leb(\SS_{n,s})$, and this $n$ would act at stage $s$ and set $\SS_{m,s} = \emptyset$ for all $m>n$, yielding a contradiction. Hence, for almost all $n$, $\SS_{n,s}=\emptyset$; this implies that for almost all $n$, $\UU_{n,s}= \UU_{n,s-1} = \emptyset$. 

\

There is a uniformly computable sequence $\seq{h_s}$ of functions such that for all $n$ and $s$, $\UU_{n,s} = \Cyl{D_{h_s(n)}}$. %Of course, we insist that if $\UU_{n,s}\ne \UU_{n,s-1}$ then $h_s(n)\ne h_{s-1}(n)$ and otherwise that $h_s(n) = h_{s-1}(n)$. 

\begin{claim}
The sequence $\seq{h_s}$ is an $\w$-computable approximation.
\end{claim}

\begin{proof}
Fix $n<\w$. Let $s_0>0$ be a stage $s$ such that $\UU_{n,s}\ne \UU_{n,s-1}$. Suppose further that for all $m<n$, $\UU_{m,s_0} = \UU_{m,s_0-1}$. Hence at stage $s_0$ we define $\UU_{n,s_0} = \SS_{n,s_0}$, but for all $m<n$, we have $\SS_{m+1} = \VV_{m}\cup\left( \SS_{m}\setminus \UU_{m}\right)\,\,[s_0]$.

Suppose that there is some stage $s>s_0$ such that $\UU_{n,s}\ne \UU_{n,s-1}$; let $s_1$ be the least such stage. We claim that there is some $m<n$ for which one of the following holds: 
\begin{enumerate}
	\item $\UU_{m,s_1}\ne \UU_{m,s_1-1}$.
	\item There is some $s\in [s_0,s_1)$ such that $f_{s}(m) \ne f_{s-1}(m)$. 
	\item $\leb \left(\VV_{m,s_1} \setminus \VV_{m,s_0}\right) > \epsilon(n)/n$. 
\end{enumerate}	
Suppose that (1) and (2) do not hold. To show that (3) holds, we show that in this case,
	\[ 	\SS_{n,s_1} \setminus \UU_{n,s_0} \subseteq \bigcup_{m < n} \left(\VV_{m,s_1}\setminus \VV_{m,s_0}\right); \]
(3) then follows from the fact that the minimality of $s_1$ ensures that $\UU_{n,s_1-1} = \UU_{n,s_0}$, and from the fact that $\leb\left(\SS_{n,s_1}\setminus \UU_{n,s_1-1} \right)> \epsilon(n)$. To verify the containment, let $X\in \SS_{n,s_1}\setminus \UU_{n,s_0}$. Since (1) does not hold, for all $m<n$, $\SS_{m+1} = \VV_{m}\cup\left( \SS_{m}\setminus \UU_{m}\right)\,\,[s_1]$. By minimality of $s_1$, and since (2) does not hold, for all $m<n$, $\UU_{m,s_1} = \UU_{m,s_0}$. Since $\SS_{n}\subseteq \bigcup_{m<n}\VV_{m}\,\,[s_1]$, there is some $m<n$ such that $X\in \VV_{m,s_1}$; pick $m^*$ to be the greatest such $m$. Then $X\in \SS_{n,s_1}$ implies that for all $m\in (m^*,n)$, $X\notin \UU_{m,s_1}$. Now if $X\in \VV_{m^*,s_0}$, then the fact that $X\notin \VV_{m,s_0}$ for all $m\in (m^*,n)$ would imply that $X\in \SS_{n,s_0}$ and so $X\in \UU_{n,s_0}$. Hence $X\in \VV_{m^*,s_1}\setminus \VV_{m^*,s_0}$ as required. 

This analysis allows us to recursively define a bound $k(n)$ for $m_{\seq{h_s}}$. Let $g$ be a bound for $m_{\seq{f_s}}$. We can let $k(0)=0$, as $\UU_{0,s}= \emptyset$ for all $s$. If $k(m)$ is defined for all $m<n$, then we can let 
\[ k(n) = \left( \sum_{m<n} k(m) \right) \cdot \left( \sum_{m<n} g(m) \right) \cdot \frac{n^2}{\epsilon(n)}.\]

Note that $k$ depends only on $g$ and not on $f$. 
\end{proof}

Let $h = \lim_s h(s)$; for $n<\w$, let $\UU_n = \Cyl{D_{h(n)}} = \lim_s \UU_{n,s}$. Hence $\seq{\UU_{n+1}}_{n<\w}$ is a clopen Demuth test. It remains to see that every set $X\in 2^\w$ which fails the test $\seq{\VV_n}$ also fails the test $\seq{\UU_{n+1}}$. This follows from the following claim. 

\begin{claim}
For all $n<\w$, 
\[ \VV_{n} \subseteq \bigcup_{m>n} \UU_m.\]
\end{claim}

\begin{proof}
	Let $n<\w$ and let $X\in \VV_n$. Let $s_0$ be a stage sufficiently late so that for all $s\ge s_0$, $X\in \VV_{n,s}$, and so that for all $s\ge s_0$, $\UU_{n,s} = \UU_{n,s-1}$. Hence for all $s\ge s_0$, we let  $\SS_{n+1} = \VV_{n}\cup\left( \SS_{n}\setminus \UU_{n}\right)\,\,[s]$, and so for all $s\ge s_0$, $X\in \SS_{n+1,s}$. 
	
	First, we see that for all $s\ge s_0$ there is some $m>n$ such that $X\in \UU_{m,s}$. We saw above that there is some $m>n$ such that $\SS_{m,s}= \emptyset$; so there is some $m>n$ such that $X\in \SS_{m,s}\setminus \SS_{m+1,s}$. Either $\UU_{m,s} = \SS_{m,s}$, or $\SS_{m+1,s} \supseteq \SS_{m,s}\setminus \UU_{m,s}$; in either case, $X\in \UU_{m,s}$. For $s\ge s_0$, let $m(s)$ be the least $m>n$ such that $X\in \UU_{m,s}$. 	
	
	The function $m(s)$ is nonincreasing. To see this, let $s>s_0$, and suppose that $m(s)\ne m(s-1)$. Suppose, for contradiction, that $m(s)>m(s-1)$. Then for all $m\in (n,k]$, we have $X\notin \UU_{m,s}$. Since $X\in \UU_{k,s-1}$, we have $\UU_{k,s}\ne \UU_{k,s-1}$. This implies that for all $m>k$, $\UU_{m,s}=\emptyset$, contradicting $m(s)>m$ and $X\in \UU_{m(s),s}$. 
	
	Hence $m = \lim_s m(s)$ exists, and for almost all $s$, $X\in \UU_{m,s}$. Hence $X\in \UU_m$, as required. 	
\end{proof}
\end{proof}

We would like to draw the reader's attention to certain terminology that was used in the last proof, and will be used throughout the paper. In a couple of instances, the word ``measure'' meant ``a nonempty clopen subset of Cantor space'', as in ``the measure passed on to $\UU_{n}$ by $\UU_{n-1}$''. This incorrect usage of the word ``measure'' makes for smoother sentences, but also emphasises that we often don't quite care which particular nonempty clopen sets we are dealing with, but rather care about its measure. 

\

The keep future calculations smoother, we employ quick tests. 

\begin{definition}\label{def_quick_test}
	A test $\seq{\VV_n}$ is \emph{quick} if for all $n$, $\leb(\VV_n) \le 2^{-2n}$. 
\end{definition}

\begin{lemma}
A set $X$ is Demuth random if and only if it passes every quick clopen Demuth test.
\end{lemma}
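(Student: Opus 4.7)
The plan is to reduce to Lemma~\ref{lem:clopen tests} and then to majorise every clopen Demuth test by a quick clopen Demuth test. One direction is immediate: since every quick clopen Demuth test is a clopen Demuth test, Lemma~\ref{lem:clopen tests} gives that every Demuth random set passes every quick clopen Demuth test. For the converse, suppose that $X$ passes every quick clopen Demuth test. By Lemma~\ref{lem:clopen tests}, it suffices to show that $X$ passes every clopen Demuth test, so I would start with an arbitrary clopen Demuth test $\seq{\VV_n}$ and construct a quick clopen Demuth test $\seq{\UU_n}$ whose failure set contains that of $\seq{\VV_n}$; then any $X$ passing $\seq{\UU_n}$ must pass $\seq{\VV_n}$.

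The construction I propose is the simple subsampling
\[ \UU_n = \VV_{2n+1}\cup \VV_{2n+2}. \]
A finite union of clopen sets is clopen, and
\[ \leb(\UU_n) \le 2^{-(2n+1)} + 2^{-(2n+2)} = \tfrac{3}{4}\cdot 2^{-2n}, \]
so $\seq{\UU_n}$ is a quick clopen test. To exhibit an $\w$\nbd c.a.\ clopen index function, take an $\w$-computable approximation $\seq{f_s}$ of the clopen index function $f$ of $\seq{\VV_n}$ with computable mind-change bound $\bar g$, and let $g_s(n)$ be a canonical index of $D_{f_s(2n+1)}\cup D_{f_s(2n+2)}$, computed uniformly from $f_s(2n+1)$ and $f_s(2n+2)$. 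Any change in $g_s(n)$ is triggered by a change in one of $f_s(2n+1)$ or $f_s(2n+2)$, so the mind-change function for $\seq{g_s}$ is bounded by the computable function $n\mapsto \bar g(2n+1)+\bar g(2n+2)$.

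It remains to verify failure-set containment. If $X\in \VV_n$ for infinitely many $n$, then for every $N$ there is $n>2N$ with $X\in \VV_n$; writing $n=2k+1$ or $n=2k+2$ forces $k\ge N$ and $X\in \UU_k$, so $X$ lies in infinitely many $\UU_k$ and fails $\seq{\UU_n}$. I do not anticipate any genuine obstacle: the argument is the standard subsampling trick for randomness tests, and the only point of care is checking that the assignment $f\mapsto g$ preserves the $\w$\nbd c.a.\ property, which falls out of the mind-change bookkeeping above.
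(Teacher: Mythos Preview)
Your proof is correct and follows essentially the same approach as the paper: both reduce to Lemma~\ref{lem:clopen tests} and then subsample via $\UU_n=\VV_{2n+1}\cup\VV_{2n+2}$ to obtain a quick clopen Demuth test whose failure set contains that of $\seq{\VV_n}$. You are somewhat more explicit than the paper about verifying the $\w$\nbd c.a.\ clopen index function and the failure-set containment, but the argument is the same.
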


\begin{proof}
Let $\seq{\VV_n}$ be a clopen Demuth test. For $n<\w$, let $\UU_n = \VV_{2n+1} \cup \VV_{2n+2}$. Then 
\[ \leb(\UU_n) \le 2^{-2n+1} + 2^{-2n+2} < 2^{-2n},\]	
so $\seq{\UU_n}$ is a quick test, and it is easy to see that $\seq{\UU_n}$ is a clopen Demuth test. If $X$ fails $\seq{\VV_n}$ then it fails $\seq{\UU_n}$. 	
\end{proof}

In general, it can be shown that if $\seq{q_n}$ is a computable, nonincreasing sequence of rational numbers, and $\sum_n q_n$ converges to a computable real number, then a set is Demuth random if and only if it passes all clopen Demuth tests $\seq{\VV_n}$ satisfying  $\leb(\VV_n)\le q_n$ for all $n$. We do not require this generality in this paper.

\

We fix an enumeration of quick clopen Demuth test. Using a uniform enumeration of all $\w$-c.a.\ functions, we fix an effective list $\seq{\VV_{n,s}^e}$ of clopen sets (that is, canonical indices are given effectively), and an effective list $\seq{g^e}$ of partial computable functions, such that:
\begin{itemize}
	\item For all~$n, e$ and~$s$, $\leb(\VV_{n,s}^e) \leq 2^{-2n}$;
	\item For all~$n$ and~$e$, if $n\in \dom g^e$, then $\# \left\{ s\,:\, \VV_{n,s}^e \ne \VV_{n,s+1}^e\right\} \le g^e(n)$;
	\item For all~$n$ and~$e$, if $n\notin \dom g^e$, then for all $s$, $\VV_{n,s}^e = \emptyset$;
	\item For all~$e$, the domain of $g^e$ is an initial segment of~$\w$;
	\item For every quick clopen Demuth test $\seq{\VV_n}$, there is an~$e$ such that~$g^e$ is total and $\VV_n = \VV_n^e$, where $\VV_n^e = \lim_s \VV_{n,s}^e$; and
	\item $g^0$ is total.
\end{itemize}

%%%%%%%%%%%%%%%%%%%%%%%%%%%%%%%%%%%
%%%%%%%%%%%%%%%%%%%%%%%%%%%%%%%%%%%
\section{Proof of Theorem \ref{thm:SJT Below Demuth}} \label{sec:main}
%%%%%%%%%%%%%%%%%%%%%%%%%%%%%%%%%%%
%%%%%%%%%%%%%%%%%%%%%%%%%%%%%%%%%%%

By the Ku\v{c}era-Nies result from \cite{KuceraNies}, it is sufficient to show that every strongly jump-traceable c.e.\ set is computable from some Demuth random set. Let $A$ be a strongly jump-traceable c.e.\ set. Let $\seq{A_s}$ be an effective enumeration of~$A$. 

We want to construct a Demuth random set that computes $A$. To do so, we enumerate a Turing functional $\Gamma$. A typical axiom, enumerated into~$\Gamma$ at a stage~$s$ of the construction, will map a clopen subset $\CC$ of Cantor space to some initial segment of $A_s$. At the end we let, for $X\in 2^\w$, 
\[
\Gamma^X = \bigcup \Gamma(\CC) \,\,\,\Cyl{X\in \CC}.
\]
Because $A$ is c.e., to keep $\Gamma$ consistent it is sufficient (and necessary) to ensure that if $\CC$ is added to the domain of $\Gamma$ at stage~$s$, then $\CC$ is disjoint from the {\em error set}:
\[
\EE_s = \bigcup \CC \,\,\,\Cyl{\CC\in \dom \Gamma \andd \Gamma(\CC)\nsubset A_s}.
\]
Our aim is to construct $\Gamma$ so that there is some $X$ such that $\Gamma^X = A$, and $X$ passes every Demuth test. There are therefore three tasks at hand:
\begin{itemize}
	\item Ensure that $X\notin \EE$, where
	\[ \EE = \bigcup_s \EE_s = \bigcup \CC \,\,\,\Cyl{\CC\in \dom \Gamma \andd \Gamma(\CC)\nsubset A} ;\]
	\item Ensure that for all $k$ there is some $\CC\in \dom \Gamma$ such that $X\in \CC$ and $|\Gamma(\CC)| \ge k$;
	\item Ensure that for all $e$ such that $g^e$ is total, there is some $n_e$ such that for all $n\ge n_e$, $X\notin \VV^e_{n}$. 
\end{itemize}

%%%%%%%%%%%%%%%%%%%%%%%%%%%%%%%%%%%
\subsection{Towards a full strategy}
%%%%%%%%%%%%%%%%%%%%%%%%%%%%%%%%%%%

We begin by illustrating simplified approaches to the construction, what goes wrong, and the added complexity needed to address these issues.

For every~$k$, we would like to have some clopen set~$\UU^k$ with $\Gamma(\UU^k) = A\uhr{k}$.  We would also like these to be nested, so that~$\UU^{k+1} \subseteq \UU^k$.  Then by compactness there is some $X \in \bigcap_k \UU^k$. For such an~$X$ we would have $\Gamma^X = A$.

The simplest approach to constructing these is to simply select some clopen set~$\UU^k$ and define $\Gamma(\UU^k) = A_k\uhr{k}$.  Of course, assuming~$A$ is non-computable, there will be~$k$ such that $A_k\uhr{k} \neq A\uhr{k}$.  When we see~$A\uhr{k}$ change, all the measure in~$\UU^k$ becomes bad (it enters~$\EE$), so we need to select new measure from~$\UU^{k-1}$ and use that to redefine~$\UU^k$ (defining $\Gamma(\UU^k) = A_s\uhr{k}$ for this new~$\UU^k$).  Since~$A\uhr{k}$ can only change~$k$ many times, choosing the sizes of the~$\UU^k$ appropriately will guarantee that there is always sufficient measure.

Of course, the above approach makes no effort to ensure that~$X$ is Demuth random.  So suppose~$\seq{\VV_n}$ were some Demuth test; we wish to ensure that~$X$ is not covered by (i.e.\ passes)~$\seq{\VV_n}$.  The easiest approach would be to assign some~$k$ the task of avoiding the test, and whenever any~$\VV_{n,s}$ covers part of~$\UU^k$, remove~$\VV_{n,s} \cap \UU^k$ from~$\UU^k$ and take replacement measure from~$\UU^{k-1}$.  Of course, if~$n$ is small compared to~$k$, it might be that $\UU^{k-1} \subseteq \VV_{n,s}$, so there would be no good replacement measure to take.  This can be solved by choosing an appropriately large value~$n_k$ and only considering~$\VV_{n,s}$ with $n \geq n_k$.

There are still problems with this approach, however.  First, because~$\UU^k$ is trying to avoid infinitely many~$\VV_n$, it will never settle; there will always be an~$n$ for which~$\VV_n$ is still ``moving'' --- the $\w$-c.a.\ approximation function is still changing.  This~$\VV_n$ will cause measure to move out of~$\UU^k$, and so~$\UU^k$ will always be changing.  Thus the limit (however we choose to define that) will not be a closed set, and so the compactness argument above will fail.

Our solution here is the same as in Lemma \ref{lem:clopen tests}: we only change~$\UU^k$ when a critical amount of ``badness'' has built up.  In particular, we only remove measure from~$\UU^k$ when the amount that needs to be replaced is at least $1/4$ the total measure of~$\UU^k$.  Because the~$\VV_n$ shrink quickly, there will be some~$m$ such that only those~$\VV_n$ with $n_k \leq n \leq m$ need be considered; the total combined measure of the~$\VV_n$ for $n > m$ cannot possible be enough to trigger a change in~$\UU^k$.  Once the approximation function has settled for $n \leq m$,~$\UU^k$ will have settled.  So~$\UU^k$ will be closed (actually clopen) as desired.

However, there will be some set $\WW^k = \UU^k \cap \bigcup_{n_k \leq n} \VV_n$ of reals in~$\UU^k$ which may be covered by the test.  This will be an open set (not necessarily effectively so) of measure at most $1/4$ the measure of~$\UU^k$, so $\UU^k - \WW^k$ will be a closed, nonempty set.  So we use $\UU^k - \WW^k$ in place of~$\UU^k$ in the compactness argument above.

Of course, now we need to worry about the sequence being nested.  $\WW^k$ may only be a fraction of the size of~$\UU^k$, but it could be that $\UU^{k+1} \subseteq \WW^k$.  So~$\UU^{k+1}$ will need to avoid~$\WW^k$ in addition to avoiding whatever test~$\seq{\VV_n'}$ it is assigned.  Again,~$\UU^{k+1}$ only replaces measure when the amount needing replacement is at least $1/4$ its total measure.  Now, however, it concerns itself not only with replacing measure covered by its test, but also with replacing measure covered by~$\WW^k$.  Since~$\WW^k$ is small compared to~$\UU^k$, and because~$\UU^{k+1}$ chooses a large~$n_{k+1}$ for its own test, the amount covered at any one time is small, so~$\UU^{k+1}$ should always be able to find good replacement measure in~$\UU^k$ when it needs it.

Now we revisit the first part of our construction, ensuring that $\Gamma(\UU^k) = A\uhr{k}$, and consider how it interacts with this new process.  Suppose~$\UU^k$ is assigned the task of avoiding~$\seq{\VV_n}$, and~$\VV_n$ is some component of the test such that~$\leb(\VV_n)$ is smaller than $1/4$ the measure of~$\UU^k$, but larger than $1/4$ the measure of~$\UU^{k+1}$.  So, on its own,~$\VV_n$ is enough to cause~$\UU^{k+1}$ to change, but not enough to cause~$\UU^k$ to change.  For the moment we ignore the actions of any other components of the test.

At stage~$s$, $\Gamma(\UU^k) = A_s\uhr{k}$ and $\Gamma(\UU^{k+1}) = A_s\uhr{k+1}$.  Suppose that at this stage,~$\VV_n$ changes to cover measure in~$\UU^{k+1}$.  Then~$\UU^{k+1}$ will remove that measure and seek replacement measure from~$\UU^k$.  However, the~$\Gamma$-computation for~$A_s\uhr{k+1}$ still exists on the removed measure.  If~$\VV_n$ changes again to cover new measure in~$\UU^{k+1}$, then~$\UU^{k+1}$ will again seek new measure.  There is a bound on the number of times~$\VV_n$ can change, but it could potentially be large with respect to $\leb(\UU^k)/\leb(\UU^{k+1})$.  So in this fashion,~$\UU^k$ could be filled with $\Gamma$-computations for~$A_s\uhr{k+1}$.  All of this measure is being ``risked'': suppose that after this happens,~$k$ enters~$A$.  Then all of the measure in~$\UU^k$ is bad, since it miscomputes~$A$, but~$A\uhr{k}$ has not changed, so~$\UU^k$ does not believe it should seek replacement measure.

This happened because~$\UU^{k+1}$ was wasteful when it sought out new measure.  The first time~$\VV_n$ covered part of it, it removed some measure and took new measure.  The next time~$\VV_n$ moves, if~$A\uhr{k+1}$ has not changed, then~$\UU^{k+1}$ should draw its new measure from the old measure it removed the last time.  In this way~$\UU^{k+1}$ avoids putting excess computations on the measure of~$\UU^k$: besides the measure in~$\UU^{k+1}$, the measure in~$\UU^k$ which has a computation for~$A_s\uhr{k+1}$ because of~$\VV_n$ will be of size at most~$\leb(\VV_n)$.

To this end, we have~$\UU^k$ keep a \emph{bin} for every test component~$\VV_n$.  When measure is removed from~$\UU^{k+1}$ because it is being covered by~$\VV_n$, we put that measure into the bin for~$\VV_n$.  If~$\UU^k$ is called upon to furnish replacement measure because some measure is being covered by~$\VV_n$, then it must be that~$\VV_n$ has moved.  So~$\UU^k$ first looks for newly uncovered measure in the~$\VV_n$-bin to use as replacement measure.  In this way, it reuses measure as much as possible.

This lets us keep the size of the measure being risked small.  But suppose that the part of~$\UU^{k+1}$ which~$\VV_n$ covered was actually part of~$\UU^{\ell}$ for~$\ell$ much larger than~$k$.  Thus this measure has computations for~$A\uhr{\ell}$ on it.  Then~$A\uhr{\ell}$ can change~$\ell$ many times.  Even though we keep the measure being risked small, and so only a small amount of measure goes bad every time this changes, if there are enough changes this can add up to a large amount of measure going bad in total.  So we must work to make sure that the number of times measure can go bad is also kept small.

This, finally, is where we use the fact that~$A$ is strongly jump-traceable.  Every computation is some actor's responsibility.  Before we can put measure into a bin, that bin must take responsibility for the computations on that measure.  Before we move measure from a bin to a~$\UU^k$, that~$\UU^k$ must take responsibility for the computations on that measure.  Here taking responsibility means having tested the initial segment of~$A$ on some \emph{box} -- a part of a trace for an $A$-partial computable function.  Then the number of times measure can go bad is bounded by the \emph{size} of the box, namely, the chosen bound on the size of the trace component. 

%%%%%%%%%%%%%%%%%%%%%%%%%%%%%%%%%%%
\subsection{Outline of the construction}
%%%%%%%%%%%%%%%%%%%%%%%%%%%%%%%%%%%

The construction is performed on a tree of strategies. Firstly, nodes of length~$e$ on the tree measure whether~$g^e$ is total or not. Hence every node will have an outcome~$\fin$ (the~$\Sigma_2$ outcome), which believes that~$g^e$ is not total. It will have infinitely many outcomes which believe that~$g^e$ is total; we will go into more detail on these a little later.

Nodes on level $e$, together with their immediate children, are responsible for enumerating axioms into~$\Gamma$ which map clopen sets to strings of length $e$ (for a technical reason, this will only be for $e > 1$). The main tension in the construction is between the wish to define $\Gamma$ on large subsets of Cantor space, so that the Demuth tests don't cover the domain of $\Gamma$; and the need to keep the measure of $\EE$ small. That measure can increase if $\Gamma$ is defined on a big clopen subset of $2^\w$ and then $A$ changes. To minimise the ramifications of $A$-changes, before axioms mapping some clopen set to some $A_s\uhr{n}$ are enumerated into $\Gamma$, we \emph{test} the correctness of $A_s\uhr{n}$ using the strong jump-traceability of $A$. 

A node $\s$ will define an order function $h_\s$, which depends on $g^{|\s|}$. The immediate children of $\s$ will together define a p.c.\ functional $\Psi_\s$. We let $\seq{\bar{T}^{\s\conc d}}_{d<\w}$ be an enumeration of all $h_\s$-traces. The children of $\s$ which believe that $g^{|\s|}$ is total are $\s\conc d$ for $d<\w$; the child $\tau=\s\conc d$ guesses that $\bar{T}^{\tau} = \seq{T^{\tau}_z}_{z<\w}$ traces $\Psi_\s^A$. 

To test whether $\alpha = A_s\uhr{n}$ is an initial segment of $A$ using a prescribed input $z$, a child $\tau=\s\conc d$, accessible at stage $s$, sets $\Psi^\alpha_\s (z)  = \alpha$. The child then waits until a later stage $t$ at which we see that either $\alpha\nsubset A_t$ (the test failed), or that $\alpha\in T^{\tau}_z$ (the test succeeded).  If we run a test on a finite collection of inputs, we wait for the test on each input to return; of course success is declared in the case that $\alpha\subset A_t$, in which case on every input the test succeeded. There will also be a third possibility: the construction may cancel the test, in which case we simply stop considering those inputs.

At every stage $s$, every node $\s$ is equipped with a clopen set $\UU^\s_s$; these will eventually stabilise to a final value $\UU^\s$. The set $\UU^\s$ is where $\s$ defines $\Gamma$-computations. These sets form a tree of clopen sets: if $\s\subseteq \tau$ then $\UU_s^\tau\subseteq \UU^\s_s$, but if $\s$ and $\tau$ are incomparable, then $\UU^\s_s$ and $\UU^\tau_s$ are disjoint. 

Let $F$ be the set of nonempty nodes that do not end with $\fin$, that is, nodes of the form $\s\conc d$. For $\tau = \s\conc d$, we let $\seq{\VV^\tau_n} = \seq{\VV^{|\s|}_n}$, and $g^\tau = g^{|\s|}$.

For every $\tau\in F$, if ever accessible, we will choose some $n_\tau<\w$. This will be the point from which $\tau$ and its descendants have to avoid the test $\seq{\VV^\tau_n}$. For every node $\tau$, we let 
\[ F^\tau = \left\{ \s\in F\,:\, \s\subsetneq \tau \right\}; \]
we let 
\[
\WW^\tau_s = \UU^\tau_s \cap  \left(\bigcup V_{n,s}^\s \,\,\,\Cyl{\s \in F^\tau \andd n_\s \leq n \leq s}\right).
\]
This is a clopen subset of $\UU^\tau[s]$. Its ``limit'' is
\[
\WW^\tau = \UU^\tau \cap \left(  \bigcup \VV^\s_n \,\,\,\Cyl{\s \in F^\tau \andd n_\s \leq n}\right),
\]
which is open in $\UU^\tau$, but not effectively so. If $\tau$ lies on the true path, we need to ensure that $X\notin \WW^\tau$, and of course to make sure that $X\notin \EE$, so at the end, we let $X$ be the unique element in the intersection of the sets $\UU^\tau\setminus (\WW^\tau\cup \EE)$ where $\tau$ ranges along the true path. 

Of course, to do this, we need to ensure that for no $\tau$ do we get $\UU^\tau \subseteq \WW^\tau\cup \EE$. We do this by ensuring that the measure of $\left(\UU^\tau \cap (\WW^\tau\cup \EE)\right)[s]$ is smaller than the measure of $\UU^\tau_s$. We will set a rational number $\delta_\tau$, and if we see that 
\[ \leb\left(\UU^\tau\cap (\WW^\tau\cup \EE)\right)[s] \geq \delta_\tau,\] 
then all of $\UU^\tau\cap (\WW^\tau\cup \EE)[s]$ will be extracted from $\UU^\tau$ by the parent of $\tau$, and replacement measure of the same size will be given to $\tau$ by the parent. We set the measure of $\UU^\tau$ to be $4\delta_\tau$; so the aim is to ensure that the fraction of $\UU^\tau$ covered by $\WW^\tau\cup \EE$ is at most a quarter.

A parent $\tau$ providing its child with replacement measure has to be careful to recycle used measure; otherwise our efforts to limit the size of $\EE$ will fail. To do so, for every $\s\in F^\tau$, for every $n\ge n_\s$, $\tau$ keeps a \emph{bin} $\BB_s^\tau(n,\s)$. This consists of measure in~$\UU^\tau$ taken from children of $\tau$ on account of being covered by $\VV^\s_n$. Thus~$\BB_s^\tau(n,\s)$ is disjoint from $\UU_s^\rho$ for all immediate children~$\rho$ of~$\tau$.  When replacing more such measure, $\tau$ will first use this reserve of measure. We will thus ensure that even though $\VV^\s_n$ moves around a lot, the total measure in $\BB_s^\tau(n,\s)$ will never surpass~$2^{-2n}$. 

The point is that for the purposes of controlling the size of $\EE$, we need to \emph{charge} any clopen set $\CC$ in the domain of $\Gamma$ to some ``account'', namely sets of boxes (inputs) on which we run tests to verify that $\Gamma(\CC)$ is indeed an initial segment of~$A$. The sizes of the boxes (the bound on the possible size of the traces) limit the amount of drawing on the accounts, and so the amount of measure that can ``go bad'', i.e., into $\EE$. There are two kinds of accounts:
\begin{itemize}
	\item Measure in a bin $\BB_s^\tau(n,\s)$ is charged to $\s$'s account for dealing with $\VV^\s_n$. The limit we ensure on the amount of measure in this bin ensures that this account is not ``overdrawn''.  Even though~$\tau$ keeps this bin, since~$\sigma$ is the strategy which believes~$g^\sigma$ to be total,~$\sigma$ is responsible for providing the boxes which are used to test measure in~$B^\tau(n,\s)$.
	\item Measure distributed directly to some $\UU^\tau$ is charged to $\UU^\s$, where $\s$ is the longest element of $F^\tau$. This is why we assumed that~$g^0$ is total:  we can thus assume that~$F^\tau$ is non-empty for every visited~$\tau$.
\end{itemize}

This is the core of the construction.  All that remains is setting up the numbers such that the arithmetic works out.

%%%%%%%%%%%%%%%%%%%%%%%%%%%%%%%%%%%
\subsection{Defining~$\delta_\s$ and~$n_\s$}
%%%%%%%%%%%%%%%%%%%%%%%%%%%%%%%%%%%

We distribute waste targets among nodes. Let $\seq{\s_n}$ be an effective enumeration of all nodes. Let $\epsilon_{\sigma_n} = 2^{-(n+5)}$. We will require that the total amount of bad measure charged by a node $\sigma$ is at most $2\epsilon_\sigma$, one $\epsilon_\sigma$ for each kind of ``account''. The definition of $\epsilon_\sigma$ is made so that 
\[ \sum_{\s} 2\epsilon_\s \le \frac14 .\]

We describe how to define the numbers $n_\tau$ and $\delta_\tau$. Along with these, we define auxiliary numbers $\delta^\s_\tau$ for $\s\in F$ and $\tau\supset \s$. The idea is the following: a node $\s\in F$ introduces the tail of a test $\seq{\VV^\s_n}_{n\ge n_\s}$, which, as described above, the extensions of $\s$ have to avoid. The associated replacements between parents and children nodes due to this tail may add measure to $\EE$. To ensure that the total amount that can go bad due to this tail-of-test is small, $\s$ instructs $\tau\supset \s$ to keep its $\delta_\tau$ below $\delta^\s_\tau$. 

In defining $n_\tau$ and $\delta_\tau$, we need to ensure that:
\begin{itemize}
	\item $n_{\s\conc d}\ge d$ (as $\s\conc d$ will have access to $d$-boxes, and will require access to $n_{\s\conc d}$-boxes).
	\item If $\delta_{\s\conc d}=2^{-2k}$ then measure enumerated into $\EE$ because of $\Gamma$-computations under the direct responsibility of $\s\conc d$ will be bounded by $2^{-k}$ (the increase is due to \emph{repeated} losses by the same actor, due to the fact that this actor does not have access to 1-boxes). The total over all such $d$ has to be bounded by $\epsilon_\sigma$.
	\item Let $\s\in F$ and let $\tau\supseteq \s$. The bin $\BB^\tau(n,\s)$ may have measure at most $\min \{\delta_\tau,2^{-2n}\}$ (recalling that $\leb(\VV^\s_n)\le 2^{-2n}$). If that size is at most $2^{-2k}$, we will again ensure that the total measure going into $\EE$ due to that bin is at most $2^{-k}$. We will need to devise the numbers $\delta^\s_\tau$ so that the total such amount, over all bins corresponding to $\seq{\VV^\s_n}_{n\ge n_\s}$, is at most $\epsilon_\s$. 
	\item The compensation from $\s$'s parent ensures that $\s$ always has at least $3\delta_\s$ much measure disjoint from $\EE\cup \WW^\s$. This has to be distributed among $\s$'s children, with some measure left for $\s$ for compensating children when $\s$ comes to their aid as their share gets covered by bad measure. Further, measure covered by the test introduced by $\s$ ($\seq{\VV_n^\s}_{n\ge n_\s}$) is unfit for use for $\s$'s children, as they have to avoid this test. Hence the total sum of $\leb(\UU^\tau)$ for the children $\tau$ of $\s$ has to be bounded by say $\delta_\s$; and the total measure covered by $\s$'s test also has to be smaller than say $\delta_\s$. 
\end{itemize}

We start by letting $\delta_{\seq{}} = 1/4$ (here $\seq{}$ denotes the empty string, that is, the strategy at the root of the tree).  We then proceed recursively. 
\begin{enumerate}
	\item Let $\tau$ be any node and suppose that $\delta_\tau$ is defined. Let $m_\fin$ be the least number greater than 5 such that $2^{-m_\fin} \le \epsilon_{\tau\conc\fin}$. Let	
\[
\delta_{\tau\conc \fin} = \min\big\{ \delta_\tau \cdot 2^{-(2m_\fin)}, \delta^\s_{\tau\conc \fin}\,:\, \s\in F^{\tau\conc \fin}\big\}
\]
and for $d<\w$, let $m_d$ be the least number greater than 5 such that $2^{-m_d} \le \epsilon_{\tau\conc d}$ and let
\[
\delta_{\tau\conc d} = \min\left\{ \delta_\tau \cdot 2^{-(2m_d+d)}, \delta^\s_{\tau\conc d}\,:\, \s\in F^{\tau\conc d}\right\} .
\]
\item Let $\s\conc d\in F$, and suppose that $\delta_{\s\conc d}$ is already defined. Let $n_{\s\conc d}$ be the least even number $n$ greater than $\max\{d,10\}$ such that:
	\begin{enumerate}
		\item $6\cdot 2^{-n}\le  \epsilon_{\s\conc d}$; and
		\item $2\cdot 2^{-n} \le \delta_{\s\conc d}$. 
	\end{enumerate} 
	Now enumerate all the proper extensions of $\s\conc d$ as $\seq{\tau_k}_{k\ge 1}$; let $\delta^{\s\conc d}_{\tau_k} = 2^{-2(n_{\s\conc d}+k)}$. 
\end{enumerate}
Observe that the~$\delta_\s$ are all integer powers of~$2$.

%%%%%%%%%%%%%%%%%%%%%%%%%%%%%%%%%%%
\subsection{Bounding the changes to~$\UU^\s$}\label{sec:defining p}
%%%%%%%%%%%%%%%%%%%%%%%%%%%%%%%%%%%

We construct a bound~$p(\s)$ on the number of times~$\UU^\s$ changes.  We define $p(\seq{}) = 1$. 

Below, and for the rest of the paper, we let~$\s^-$ denote the immediate predecessor of a node~$\s$; so $\s^- = \s\uhr{|\s| - 1}$. We shall be careful to only write~$\sigma^-$ when $\sigma \neq \seq{}$.

Whenever~$\UU^{\s^-}$ changes, $\UU^\s$ is made empty.  Also, new measure is given if $\UU^\s_s = \emptyset$, which it will only be at the beginning of the construction or when~$\UU^{\s^-}$ changes.  Thus changes of this sort can occur at most $2\cdot p(\s^-)$ many times.

New measure is also given in case the measure of $\left(\UU^\s\cap (\WW^\s\cup \EE)\right)[s]$ exceeds~$\delta_\s$. For this to happen, we must have either $\leb(\WW^\s_s)\ge \delta_\s/2$ or $\leb(\UU^\s \cap \EE)[s] \ge \delta_\s/2$. Since $\EE$ is effectively open, the latter can happen at most $2/ \delta_\s$ many times: each time it happens, a subset of $\EE_s$ of size $\delta_\s/2$ is removed from $\UU^\s_s$, and it is never returned to $\UU^\s$, as new measure supplied to $\s$ after stage $s$ is disjoint from $\EE_s$.

Now consider $\WW^\s_s$. Since the tails of the tests $\seq{\VV^\rho_n}_{n\ge n_\rho}$ which make up $\WW^\s$ shrink quickly, we can easily compute a number $m$ such that if $\leb(\WW^\s_s)\ge \delta_\s/2$, then 
\begin{equation*}\tag{$\dagger$}\label{size-change-ineq}
\leb \left( \bigcup \VV^\rho_{n,s} \,\,\,\Cyl{\rho \in F^\s \andd n_\rho \leq n \leq m} \right) \ge \delta_\s/4;
\end{equation*}
for example $m > 8|F^\s| \cdot (-\log_2 \delta_\s)$ should do.  Measure can only leave a bin~$\BB^{\s^-}(n,\rho)$ when either~$\VV_n^\rho$ changes or~$\UU^{\s^-}$ changes.  Thus, between stages at which~$\UU^{\s^-}$ or any of the $\VV^\rho_{n}$ (for $\rho\in F^\s$ and $n\in [n_\rho,m]$) change, the situation in (\ref{size-change-ineq}) can happen at most $|F^\s|\cdot m\cdot 4/\delta_\s$ many times. The total number of times any of these sets change is of course at most
\[ p(\s^-) + \sum g^\rho(n) \,\,\,\Cyl{\rho \in F^\s \andd n_\rho \leq n \leq m}.\]
Hence we can let the bound 
\[ p(\s) = 2\cdot p(\s^-) + 2/\delta_\s + |F^\s|\cdot m\cdot 4/\delta_\s \left( p(\s^-) + \sum g^\rho(n) \,\,\,\Cyl{\rho \in F^\s \andd n_\rho \leq n \leq m}\right).\]

Of course,~$p$ will be partial computable, since~$p(\s)$ will only exist if all of the~$g^\rho(n)$ in the above expression exist.  However, if any of the~$g^\rho$ for~$\rho \in F^\s$ are partial, we know that~$\s$ is not on the true path.  In fact, for such~$\rho$ we would know that $\rho \subset \s$ is not on the true path.  Thus any~$\s$ can wait until~$p(\s)$ is defined before taking any action.

%%%%%%%%%%%%%%%%%%%%%%%%%%%%%%%%%%%
\subsection{Definition of~$h_\s$}
%%%%%%%%%%%%%%%%%%%%%%%%%%%%%%%%%%%

We now describe the order functions defined by nodes. Let $\s$ be a node. For all $d<\w$ and all $n\ge n_{\s\conc d}$ we will define a number $m_{\s\conc d}(n)$ -- this is the number of $n$-boxes required of $\s$ by $\s\conc d$. We then define the order function $h_\s$ by ensuring that for all $n$, 
\[ \# \left\{ z\,:\, h_\s(z) = n \right\} = \sum m_{\s\conc d}(n) \,\,\,\Cyl{n_{\s\conc d}\le n} .\]
Note that since $n_{\s\conc d}\ge d$, the sum on the right is finite, so the function $h_\s$ will be unbounded. 

Recall that there are two streams pouring measure into $\EE$: measure from bins, and measure allocated as $\UU^\tau$. For either stream, we need to allocate boxes for tests ensuring that the stream is not too voluminous.

We start with bins.  Fix $d<\w$ and recall that $g^{\s\conc d} = g^{|\s|}$.  Let $\rho\supseteq \s\conc d$ and $n\ge n_{\s\conc d}$. The total measure in the bin $\BB^\rho(n,\s\conc d)$ is, as mentioned above, $\min \{ \delta_\rho,2^{-2n}\}$. This is a number of the form $2^{-k}$. Since the bin is tied to the test component $\VV^{\s\conc d}_n$ which can move around at most $g^{\s\conc d}(n)$ many times, and since it will be convenient to discard our existing tests whenever~$\UU^\rho$ receives new measure, for every $k'\ge k$ we will require 
\[ \left(1+p(\rho)\cdot 2^{k'+1}\cdot g^{\s\conc d}(n)\right)^{k'+1} \] many $k'$-boxes from $\s$.

Next, we deal with measure allocated as $\UU^\tau$. The intended measure $4\delta_{\tau}$ of $\leb(\UU^{\tau})$ is again a number of the form $2^{-k}$ for some $k$. The child $\tau$ requires access to $k$-boxes when it is first set up, and when it is replenished with measure by $\tau^-$.  Thus we will require  
\[ \left(1+ p(\tau^-) + p(\tau)\right)^{k+1} \]
extra $k$-boxes for $\tau$.

We do not in general request these boxes as part of~$m_\tau(k)$ (only in part because~$\tau$ may end in~$\fin$).
Instead, let~$\s\conc d \in F^\tau$ be largest.
These $k$-boxes are incorporated into~$m_{\s\conc d}(k)$.  We observe that $F^\tau \subseteq F^{\s\conc d} \cup \{\s\conc d\}$, and thus~$p(\tau)$ depends only on~$\delta_\tau$ and~$g^\rho$ for~$\rho \in F^\s \cup \{\s\conc d\}$.  Also, since $\delta_\tau$ approaches 0 rapidly, for a given~$\s\conc d$ and~$k$ there are only finitely many extensions~$\tau$ which will seek to incorporate a request into~$m_{\s\conc d}(k)$, and further we can uniformly compute (a canonical index for) the collection of these extensions.

The astute reader may observe that there is a problem with the above paragraph if~$\tau^- = \fin^n$ for some~$n$, or if $|\tau| \leq 1$.  However, as we have assumed that~$g^0$ is total, we will ignore $\tau$ of the first sort on the tree.  For~$\tau$ of the second sort, since there is no~$V^\seq{}$, and~$\tau$ does not enumerate computations, there is no manner in which measure~$\UU^\tau$ is responsible for might enter $\EE$.  Thus~$\UU^\tau$ has no need of boxes.

This completes the definition of the numbers~$m_{\s\conc d}(n)$, and so of~$h_\s$.  We observe that~$m_{\s\conc d}$ is a partial computable function, and is total if~$g^\rho$ is total for all $\rho \in F^\s \cup \{\s\conc d\}$.  Of course, if one or more of these functions is not total, then none of the infinitary children $\s\conc d$ of $\s$ lies on the true path. Hence we define $h_\s$ as the construction proceeds, extending it during $\s$-expansionary stages. The above calculation gives us, for every $k$, a length $\ell_\s(k)$ such that given $g^\rho\rest{\ell_\s(k)}$ for all $\rho\in F^\s\cup \{\s\conc d\}$, we know how many $k$-boxes are required by all of $\s$'s children together, and so only once we see convergence of these functions up to $\ell_\s(k)$ do we define $h_\s$ to equal $k$ on the required interval. Before these $k$-boxes are ``set up'', we do not allow any child of $\s$ that requires them to take any action. 

%%%%%%%%%%%%%%%%%%%%%%%%%%%%%%%%%%%
\subsection{Organizing the boxes}
%%%%%%%%%%%%%%%%%%%%%%%%%%%%%%%%%%%

For $|\s| > 1$, let~$k$ be such that $2^{-k} = 4\delta_\s$, and let~$\pi\conc d \in F^\s$ be largest, so that~$\s$'s extra~$(1+ p(\s^-) + p(\s))^{k+1}$ many~$k$-boxes were incorporated into~$m_{\pi\conc d}(k)$.  We specify an interval $I_\s \subseteq h_\pi^{-1}(k)$ of size $(1+ p(\s^-)+p(\s))^{k+1}$.  We think of~$I_\s$ as the discrete hyper-cube $[0, p(\s^-) +p(\s)]^{k+1}$.  $I_\s$ will be used for testing the measure which enters~$\UU^\s$.

Also for $|\s| > 1$, for $\rho \in F^\s$, $n \ge n_\rho$ and $k$ with $2^{-k} \le \min\{\delta_\s, 2^{-2n}\}$, we specify an interval $J_\s(\rho, n, k) \subseteq h_{\rho^-}^{-1}(k)$ of size $(1+p(\rho)\cdot2^{k+1}\cdot g^\rho(n))^{k+1}$.  These are the boxes set aside for $\BB^\s(n,\rho)$.  We think of $J_\s(\rho,n,k)$ as the discrete hyper-cube $[0, p(\s)\cdot2^{k+1}\cdot g^\rho(n)]^{k+1}$.  We use~$J_\s(\rho,n,k)$ for testing the measure which enters~$\BB^\s(n,\rho)$.

These intervals are all chosen to be disjoint. They can be easily computed once the appropriate numbers $m_{\s\conc d}(n)$ are known.

%%%%%%%%%%%%%%%%%%%%%%%%%%%%%%%%%%%
\subsection{Organizing the tests}
%%%%%%%%%%%%%%%%%%%%%%%%%%%%%%%%%%%

As discussed before, we have intervals which we think of as discrete hyper-cubes on which we perform tests.  We perform all tests on an affine hyper-plane of said hyper-cube.  Whenever a test succeeds, we move on to the next hyper-plane.  However, if a test succeeds but~$A$ later changes such that the tested~$\alpha$ is not an initial segment of~$A$, we restrict our attention to the hyper-plane used in that test.  That hyper-plane becomes our new hyper-cube, and all future testing is done within it.

In order to properly track which boxes we perform tests on, we will keep several numbers.

Let~$H$ be a hyper-cube (some~$I_\s$ or~$J_\s(\rho,n,k)$).  For tests on~$H$, we keep a number $b_s(H)$, which is the number of times we have restricted to a hyper-plane by stage~$s$.  We shall arrange that $b_s(H) \leq k$, where~$k$ is the size of the boxes which make up~$H$ ($k+1$ is the dimension of~$H$).  We also keep numbers~$c_s(H,i)$ for every $i \leq k$, which will indicate which hyper-plane we restricted to each time, and where we are currently testing.  In practice,~$c_s(H,i)$ will count tests which succeeded or were cancelled.

We begin by defining~$b_0(H) = 0$ and $c_0(H,i) = 0$ for every $i \leq k$.

At stage~$s+1$, if there are stages $t_0 < t_1 < s$ such that $b_{t_0}(H) = b_s(H)$ and a test on~$H$ of some~$\alpha$ was begun at stage~$t_0$ and succeeded at stage~$t_1$, but $\alpha \not \subset A_{s+1}$, then~$b_{s+1}(H) = b_s(H) + 1$.  We choose the least such~$t_0$ and let $c_{s+1}(H,b_s(H)) = c_{t_0}(H,b_s(H))$.

Otherwise, we define $b_{s+1}(H) = b_s(H)$.  If there is a stage $t < s$ such that $b_t(H) = b_{s+1}(H)$ and $c_t(H,b_{s+1}(H)) = c_s(H,b_{s+1}(H))$ and a test on~$H$ was begun at stage~$t$ and succeeded or was cancelled at stage~$s$, we define~$c_{s+1}(H,b_{s+1}(H)) = c_s(H,b_{s+1}(H)) + 1$.  Otherwise, we define $c_{s+1}(H,b_{s+1}(H)) = c_s(H,b_{s+1}(H))$.

For $i < b_{s+1}(H)$, we always define $c_{s+1}(H,i) = c_s(H,i)$.

When we wish to perform a test on~$H$ at stage~$s+1$, we shall test on the subspace
\[
\{\vec{x} \in H \mid \forall i \le b_{s+1}(H) [x_i = c_{s+1}(H,i)]\}.
\]

If we wish to perform a test, but the interval~$H$ has not yet been defined (because the appropriate order~$h_\s$ has not yet been sufficiently extended), we wait until a stage at which~$H$ has been defined, and then immediately perform the test.

%%%%%%%%%%%%%%%%%%%%%%%%%%%%%%%%%%%
\subsection{Moving measure into bins}
%%%%%%%%%%%%%%%%%%%%%%%%%%%%%%%%%%%

Whenever measure enters~$\BB^\s(n,\rho)$, the bin must claim responsibility for that measure with some certainty~$k'$ (this is the size of the box on which this measure was tested).  We maintain three properties:
\begin{itemize}
\item The amount of measure in~$\BB^\s_s(n,\rho)$ never exceeds $\min\{\delta_\s, 2^{-2n}\}$.
\item All measure in~$\BB^\s_s(n,\rho)$ is claimed with some certainty~$k$.
\item The amount of measure claimed with certainty~$k$ is no more than~$2^{-k+1}$.
\end{itemize}
We describe how this is maintained.

Suppose, at stage~$s$, strategy~$\s$ has some clopen set~$\XX$ that it wishes to move into~$\BB^\s_{s+1}(n,\rho)$ (because $\XX \subseteq (\UU^\s \cap \VV_n^\rho)[s]$).  As discussed before, we strive to reuse measure in~$\BB^\s(n,\rho)$ whenever possible.  Thus, when~$\XX$ enters~$\BB^\s_{s+1}(n,\rho)$, any measure in~$\BB^\s_s(n,\rho)$ not covered by~$\VV_{n,s}^\rho$ is used first to replace the measure in~$\XX$.  We choose clopen $\YY \subseteq \left(\BB^\s(n,\rho) - \VV_{n}^\rho\right)[s]$ with
\[
\leb(\YY) = \min\{ \leb(\BB^\s(n,\rho) - \VV_{n}^\rho)[s], \leb(\XX)\}.
\]
If we were to move measure as desired, $\BB^\s_{s+1}(n,\rho) = \BB^\s_s(n,\rho) \cup \XX - \YY$.  However, we cannot move this measure until we have successfully tested it.

We let~$k_0$ be least such that $2^{-k_0} \le \leb(\BB^\s_s(n,\rho) \cup \XX - \YY)$.  We would like~$\BB^\s(n,\rho)$ to take responsibility for~$\BB^\s_{s+1}(n,\rho)$ with certainty~$k_0$, but in order to avoid running out of~$k_0$ boxes, we must consider the possibility that~$\BB^\s(n,\rho)$ is already taking responsibility for some subset of~$B^\s_s(\rho,n) - \YY$ with certainty~$k_0$.  Let~$\ZZ(k_0)$ be the clopen subset of~$\BB^\s_s(n,\rho) - \YY$ for which~$\BB^\s(n,\rho)$ is already taking responsibility with certainty~$k_0$.  We let~$k_1$ be least such that
\[
2^{-k_1} \le \leb(\BB^\s_s(n,\rho)\cup \XX - \YY - \ZZ(k_0)),
\]
and let~$\ZZ(k_1)$ be the clopen subset of $\BB^\s_s(n,\rho) - \YY - \ZZ(k_0)$ for which~$\BB^\s(n,\rho)$ is taking responsibility with certainty~$k_1$.  In this way we create a sequence $k_0\leq k_1\leq k_2\leq \dots$, which we continue until finding an~$m$ such that $k_m = k_{m+1}$.  Note that this will necessarily happen, as at any finite stage,~$\BB^\s(n,\rho)$ will have only used finitely many~$k$.

Having found~$k_m$, we let~$r$ be largest such that~$\Gamma(\CC) = A_s\uhr{r}$ for some clopen $\CC \subseteq \UU^\s$, or $r = |\s| + 1$, whichever is larger.  We test $\alpha = A_s\uhr{r}$ on~$J_\s(\rho,n,k_m)$.  If, before this test returns,~$\UU^\s$ changes, we cancel the test and end this attempt to move~$\XX$ into~$\BB^\s(n,\rho)$.  Otherwise, because of how we shall define accessible strategies, at the next stage~$t$ at which~$\s$ is accessible, we shall be guaranteed that the test has returned.

If the test failed, our attempt to move~$\XX$ into~$\BB^\s(n,\rho)$ has failed, and we do nothing more.

If the test succeeded, we move~$\XX$ into~$\BB^\s(n,\rho)$.  Our construction will ensure that $\BB^\s_s(n,\rho) = \BB^\s_t(n,\rho)$.  We define
\[
\BB^\s_{t+1}(n,\rho) = \bigl(\XX \cup \BB^\s_t(n,\rho)\bigr) - \YY.
\]
$\BB^\s(n,\rho)$ takes responsibility with certainty~$k_m$ for
\[
\BB^\s_{t+1}(n,\rho) - \ZZ(k_0) - \ZZ(k_1) - \dots - \ZZ(k_{m-1}).
\]

%%%%%%%%%%%%%%%%%%%%%%%%%%%%%%%%%%%
\subsection{Construction}
%%%%%%%%%%%%%%%%%%%%%%%%%%%%%%%%%%%

For all $s$, we let $\UU^{\seq{}}_s = 2^\w$.  For all $\s, \rho, n$ with $|\s| > 0$, we let $\UU_0^\s = \emptyset$ and $\BB^\s_0(n,\rho) = \emptyset$.

At stage $s$, we describe which nodes~$\s$ are accessible at stage $s$, and what actions they take. These will be:
\begin{enumerate}
	\item Extending the definition of $h_\s$;
	\item Defining $\UU^\tau_{s+1}$ for children $\tau$ of $\s$;
	\item Defining the bins $\BB^\s_{s+1}(n,\rho)$ for $\rho\in F^\s$; and
	\item Enumerating axioms into $\Gamma$;
	\item Choosing a child~$\tau$ to attend to next.
\end{enumerate}

The root $\seq{}$ is accessible at every stage. We will later show (Lemma \ref{lem:total bad measure}) that $\leb(\EE_{s})\le 1/4$. Based on this, we proceed with the stage. Let~$\s$ be a node which is accessible at stage $s$. If $|\s|=s$ we halt the stage.  If~$\s = \fin$, we halt the stage (as by assumption~$g^0$ is total, and thus~$\fin$ cannot be the true outcome of~$\seq{}$).

Otherwise, we break the action of~$\s$ into three substages.

%%%%%%%%%%%%%%%%%%%%%%%%%%%%%%%%%%%
\subsubsection{First Substage}
%%%%%%%%%%%%%%%%%%%%%%%%%%%%%%%%%%%

First we extend~$h_\s$, as already discussed.

%%%%%%%%%%%%%%%%%%%%%%%%%%%%%%%%%%%
%\subsubsection{Second Substage}
%%%%%%%%%%%%%%%%%%%%%%%%%%%%%%%%%%%

%Next, if $|\s| > 1$, we enumerate the axiom $\Gamma(\UU^\s[s+1]) = A_s\uhr{|\s|}$.

%%%%%%%%%%%%%%%%%%%%%%%%%%%%%%%%%%%
\subsubsection{Second Substage}
%%%%%%%%%%%%%%%%%%%%%%%%%%%%%%%%%%%

We let $t < s$ be the last stage at which~$\s$ was accessible (with $t = 0$ if there was no such previous stage).  At stage~$t$,~$\s$ may have begun tests because it wished to change~$\UU^\tau$ for some child~$\tau$.  If there were no tests begun or they were cancelled before stage~$s$, we proceed immediately to the third substage.

If there were tests begun which were not cancelled, then those tests have now returned.  If they failed, we proceed immediately to the third substage.

Otherwise, we change~$\UU^\tau_{s+1}$ and the various bins~$\BB^\s_{s+1}(n,\rho)$ as~$\s$ wished to at stage~$t$.  For every $\pi \supseteq \tau$ and every~$\rho$ and~$n$, we define~$\BB^\pi_{s+1}(n,\rho) = \emptyset$ and cancel any tests begun by~$\pi$.  For every~$\pi \supsetneq \tau$, we define $\UU^\pi_{s+1} = \emptyset$.  If $|\tau| > 1$, we enumerate the axiom $\Gamma(\UU^\tau_{s+1}) = A_s\uhr{|\tau|}$.  We then end stage~$s$.

%%%%%%%%%%%%%%%%%%%%%%%%%%%%%%%%%%%
\subsubsection{Third Substage}
%%%%%%%%%%%%%%%%%%%%%%%%%%%%%%%%%%%

First we choose a child~$\tau$ to attend to by considering $\dom g^{|\s|}$.  Let $t < s$ be the last stage at which~$\s$ was accessible (with $t = 0$ if there was no such previous stage).  If $\dom g_t^{|\s|} = \dom g_s^{|\s|}$, we let $\tau = \s\conc\fin$.  Otherwise, we choose the least~$d$ such that no test on~$\Psi_\sigma$ begun by~$\s\conc d$ or an extension of~$\s\conc d$ is waiting for a return in~$\bar{T}^{\s\conc d}$, and no test on~$\Psi_{\sigma\conc d}$ is waiting to begin (because the appropriate interval has not yet been defined).  We observe that $d = s$ is always such a~$d$, since by induction~$\s\conc s$ has never yet been accessible.  We let $\tau = \s\conc d$.

We consider $\leb(\UU^{\tau} - (\WW^{\tau} \cup \EE))[s]$.  There are two cases.

\

{\em Case 1:}

If $\leb (\UU^{\tau} - (\WW^{\tau} \cup \EE))[s] \leq 3\delta_\tau$ (this includes the possibility that $\UU^\tau_s = \emptyset$), we wish to extract $\UU^{\tau} \cap (\WW^{\tau} \cup \EE))[s]$, 
moving~$\WW^\tau_s$ into the appropriate bins and moving new measure into~$\UU^\tau_{s+1}$.  All measure in~$\WW^\tau_s$ must be moved into a bin, but potentially measure could properly be placed in more than one bin.  Thus we partition~$\WW^\tau_s$ as $\WW^\tau_s = \bigsqcup_{\rho,n} \XX_{n,s}^\rho$, where
\[
\XX_{n,s}^\rho = \left(\WW^\tau \cap \VV_{n}^\rho - \bigcup \XX_{n'}^\pi \,\,\,\Cyl{\langle\pi,n'\rangle < \langle \rho, n\rangle}\right)[s]
\] is clopen, $\rho \in F^\tau$, and $n_\rho \leq n \leq s$.  The particulars of the ordering~$<$ are unimportant.

We wish to move~$\XX^\rho_{n,s}$ into~$\BB^\s(n,\rho)$ (as previously discussed), and we simultaneously wish to move new measure into~$\UU^\tau$ to replace that which was removed.  Let~$\YY^\rho_{n,s}$ be the clopen set~$\YY$ provided by~$\BB^\s(n,\rho)$.  These may not be enough to bring the measure of~$\UU^\tau$ up to~$4\delta_\tau$, so we choose clopen
\[
\YY^\tau_s \subseteq \UU^\s_{s+1} - \left( \WW^\s \cup \bigcup_{n \ge n_\s} \VV_{n}^\s \cup \bigcup_d \UU^{\s\conc d} \cup \UU^{\s\conc \fin}\cup \EE\right)[s]
\]
with
\[
\leb\left( \UU^\tau \cup \YY^\tau \cup \bigcup_{\rho, n} \YY_{n}^\rho - \WW^\tau - \EE\right)[s] = 4\delta_\tau.
\]
We wish to move~$\YY^\tau_s$ and the~$\YY^\rho_{n,s}$ into~$\UU^\tau$, but first~$\UU^\tau$ must be prepared to take responsibility for this measure.  Let~$r$ be largest such that~$\Gamma(\CC) = A_s\uhr{r}$ for some clopen $\CC \subseteq \UU^\s$, or $r = |\tau|$, whichever is larger.  We begin a test of $\alpha = A_s\uhr{r}$ on~$J_\tau$ (observe that this is the same~$\alpha$ we are testing to move~$\XX^\rho_{n,s}$ into~$\BB^\s(n,\rho)$).  We then end the stage.

\

{\em Case 2:}

If $\leb (\UU^{\tau} - (\WW^{\tau} \cup \EE))[s] > 3\delta_\tau$, we do not need to adjust~$\UU^\tau$.  We define $\UU^\tau_{s+1} = \UU^\tau_s$.  We let~$\tau$ be the next accessible node and continue the stage.

%%%%%%%%%%%%%%%%%%%%%%%%%%%%%%%%%%%
\subsubsection{End of Stage}
%%%%%%%%%%%%%%%%%%%%%%%%%%%%%%%%%%%

At the end of stage~$s$, for every~$\UU^\s_{s+1}$ which was not otherwise defined, we define $\UU^\s_{s+1} = \UU^\s_s$.  For every~$\BB^\s_{s+1}(n,\rho)$ which was not otherwise defined, we define $\BB^\s_{s+1}(n,\rho) = (\BB^\s(n,\rho) - \EE)[s]$.

%%%%%%%%%%%%%%%%%%%%%%%%%%%%%%%%%%%
\subsection{The True Path and $X$}
%%%%%%%%%%%%%%%%%%%%%%%%%%%%%%%%%%%

We define the true path through the construction inductively.  If~$g^e$ is partial, define~$f(e) = \fin$.  Otherwise, define~$f(e) = d$ where~$d$ is least such that~$(f\uhr{e})\conc d$ is infinitely often accessible (we show in Claim \ref{claim:tot outcome exists} that such a~$d$ exists).

Define~$X$ to be the unique element of~$\bigcap \left(\UU^{f\uhr{e}} - \WW^{f\uhr{e}} - \EE\right) \,\,\,\Cyl{e \in \w}$, where~$\UU^{\s} = \lim_s \UU^\s_s$.  We show in Claim \ref{claim:U limit exists} that~$\UU^\s$ exists and in Claim \ref{claim:intersection nonempty} that this intersection is non-empty.  That~$X$ is unique follows from the shrinking sizes of the~$\UU^{f\uhr{e}}$.

%%%%%%%%%%%%%%%%%%%%%%%%%%%%%%%%%%%
\subsection{Verification}
%%%%%%%%%%%%%%%%%%%%%%%%%%%%%%%%%%%

We perform the verification as a sequence of claims.

\begin{cclaim}
For~$\s$ a strategy in the construction, if~$p(\s)$ is undefined, then~$\UU^\s_s = \emptyset$ for every~$s$.
\end{cclaim}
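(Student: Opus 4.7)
My plan is to prove the claim by induction on stages~$s$, showing that every mechanism in the construction which can modify~$\UU^\s_{s+1}$ away from~$\UU^\s_s$ either preserves emptiness or requires~$p(\s)$ to already be defined. The base case will be immediate: for all~$\s$ with $|\s| > 0$ we have $\UU^\s_0 = \emptyset$ by initialization, and for $\s = \seq{}$ the hypothesis of the claim is vacuous since $p(\seq{}) = 1$.

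For the inductive step, the first thing I would do is catalogue every place in the construction where~$\UU^\s_{s+1}$ is assigned. The end-of-stage default $\UU^\s_{s+1} = \UU^\s_s$ trivially preserves emptiness. The cascading assignment $\UU^\pi_{s+1} = \emptyset$ for $\pi \supsetneq \tau$, executed in the second substage of~$\tau^-$ whenever measure at~$\tau$ changes, can only set things to~$\emptyset$. This leaves a single route by which nonempty measure might enter~$\UU^\s$: the second substage of~$\s^-$ in the case where $\tau = \s$ is the child being attended to and the pending tests begun at an earlier accessible stage have just returned successfully.

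The heart of the argument will be to trace this single mechanism back to its preconditions. The test in question was begun in Case~1 of an earlier third substage of~$\s^-$, performed on the interval~$I_\s$. By the testing convention, if~$I_\s$ has not yet been set up the test must wait; no measure can move into~$\UU^\s$ until~$I_\s$ exists. But $I_\s \subseteq h_\pi^{-1}(k)$ (where $\pi$ is the largest element of~$F^\s$ and $2^{-k} = 4\delta_\s$) is specified to have size $(1 + p(\s^-) + p(\s))^{k+1}$; extending~$h_\pi$ to accommodate this block, per the protocol in Section~\ref{sec:defining p}, requires~$p(\s)$ to have converged. Hence, if~$p(\s)$ is undefined, the interval~$I_\s$ is never set up, no test is ever begun, no second substage of~$\s^-$ ever successfully injects measure into~$\UU^\s$, and $\UU^\s_{s+1} = \UU^\s_s = \emptyset$ by the inductive hypothesis.

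The main delicacy I expect is simply organizational: I must verify that the catalogue above is complete, and in each case isolate the precise dependency on~$p(\s)$ itself rather than on~$p(\s^-)$ (which may well be defined even when~$p(\s)$ is not). A brief sanity check on the low-level cases is also warranted: since $p(\s)$ depends on~$g^\rho$ only for $\rho \in F^\s$, when $|\s| \leq 1$ one has $F^\s = \emptyset$ and the recursion of Section~\ref{sec:defining p} terminates unconditionally, so the hypothesis of the claim forces $|\s| > 1$ — which is exactly the regime in which~$I_\s$ is defined and the tracing argument above applies.
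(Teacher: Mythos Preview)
Your proposal is correct and follows essentially the same approach as the paper: the key observation in both is that measure can only enter~$\UU^\s$ after a successful test on~$I_\s$, and~$I_\s$ cannot be set up unless~$p(\s)$ has converged. Your version is considerably more explicit than the paper's three-sentence proof (you spell out the induction, the catalogue of assignment points, and the sanity check on $|\s|\le 1$), but the underlying argument is identical.
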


\begin{proof}
By construction, before~$\s$ will receive measure,~$\s^-$ must have a test on~$I_\s$ succeed.  Since~$p(\s)$ is undefined,~$I_\s$ is never defined.  Thus any test on~$I_\s$ that~$\s^-$ wishes to perform will wait forever to begin, and hence will never return successfully.
\end{proof}

\begin{cclaim}\label{claim:U limit exists}
For~$\s$ a strategy in the construction, if~$p(\s)$ is defined, then~$p(\s)$ bounds the number of times~$\UU^\s$ changes.
\end{cclaim}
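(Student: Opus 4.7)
The plan is to prove the claim by induction on $|\s|$, essentially verifying that the construction of $p$ in Section~\ref{sec:defining p} really does bound the number of changes to $\UU^\s$. When $\s = \seq{}$, $\UU^{\seq{}}_s = 2^\w$ is constant, so $p(\seq{}) = 1$ trivially bounds the (zero) changes. For the inductive step, assume $p(\s^-)$ bounds changes to $\UU^{\s^-}$ and $p(\s)$ is defined; I would classify changes to $\UU^\s$ into two groups and bound each separately.

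The first group contains changes triggered when $\UU^{\s^-}$ changes (which forces $\UU^\s_{s+1} = \emptyset$) together with the subsequent re-allocation of measure to an empty $\UU^\s$. By induction, $\UU^{\s^-}$ changes at most $p(\s^-)$ times, so this group contributes at most $2 \cdot p(\s^-)$ changes. The second group contains the adjustments triggered by $\leb(\UU^\s \cap (\WW^\s \cup \EE))[s] \geq \delta_\s$. I would split this into an $\EE$-dominant subcase, where $\leb(\UU^\s \cap \EE)[s] \geq \delta_\s/2$, and a $\WW^\s$-dominant subcase, where $\leb(\WW^\s_s) \geq \delta_\s/2$. In the $\EE$-dominant subcase, each occurrence removes a chunk of $\EE \cap \UU^\s$ of measure at least $\delta_\s/2$ from $\UU^\s$; since replacement measure is explicitly chosen disjoint from $\EE_s$, and $\EE$ is monotone, these chunks at distinct occurrences are pairwise disjoint subsets of $2^\w$. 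This gives the bound $2/\delta_\s$.

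For the $\WW^\s$-dominant subcase, the choice of $m$ in Section~\ref{sec:defining p} ensures that every such occurrence satisfies \eqref{size-change-ineq}, so at least $\delta_\s/4$ worth of measure flows from $\UU^\s$ into the bins $\BB^{\s^-}(n, \rho)$ for $\rho \in F^\s$ and $n \in [n_\rho, m]$. Between consecutive stages at which $\UU^{\s^-}$ or one of these $\VV^\rho_n$ changes, measure cannot leave these bins: recycled measure is drawn only from the uncovered portion $\BB^{\s^-}(n,\rho) \setminus \VV^\rho_n$, while entering measure is always a subset of $\VV^\rho_n$ at the current stage, so when $\VV^\rho_n$ is fixed the uncovered portion is a non-increasing clopen set. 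Hence the combined bin mass grows monotonically by at least $\delta_\s/4$ per occurrence during a stable period, and the combined bin capacity is at most $|F^\s| \cdot m$ (each bin caps at $\min\{\delta_{\s^-}, 2^{-2n}\} \leq 1$), yielding at most $4|F^\s| \cdot m / \delta_\s$ occurrences per stable period. The number of stable periods is at most $p(\s^-) + \sum g^\rho(n)\,\Cyl{\rho \in F^\s \andd n_\rho \leq n \leq m}$, by induction and by the definition of $g^\rho$ as a mind-change bound.

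Summing these contributions reproduces exactly the expression defining $p(\s)$ in Section~\ref{sec:defining p}. The main subtlety, as I see it, is verifying that within a stable period measure truly cannot leave the relevant bins; this rests on a careful reading of the swap procedure in the ``Moving measure into bins'' subsection, in particular the observations that the extracted set $\XX$ is always a subset of $\VV^\rho_n$ at the moment of extraction and that $\YY$ is drawn from $\BB^{\s^-}(n,\rho) \setminus \VV^\rho_n$, so that while $\VV^\rho_n$ and $\UU^{\s^-}$ remain unchanged the uncovered portion of each bin is monotonically non-increasing and hence the bin mass is monotonically non-decreasing.
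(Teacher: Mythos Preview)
Your proposal is correct and takes essentially the same approach as the paper: the paper's proof is the single line ``Immediate from the calculations in Section~\ref{sec:defining p}'', and what you have written is precisely a careful unpacking of those calculations, matching the case split (changes inherited from $\UU^{\s^-}$, the $\EE$-dominant subcase, and the $\WW^\s$-dominant subcase handled via the bins and the cutoff $m$) term for term with the derivation of $p(\s)$ there. One small remark: your sentence ``the combined bin mass grows monotonically by at least $\delta_\s/4$ per occurrence'' slightly overstates what your argument establishes---what actually increases by $\sum \leb(\XX^\rho_n)$ at each occurrence is the \emph{covered} portion $\sum \leb(\BB^{\s^-}(n,\rho)\cap \VV^\rho_n)$, since the outgoing $\YY$ is drawn from the uncovered part; this is enough for the bound and is in keeping with the level of detail in Section~\ref{sec:defining p}.
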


\begin{proof}
Immediate from the calculations in Section \ref{sec:defining p}.
\end{proof}

\begin{cclaim}
If~$H$ is some discrete hyper-cube used for testing, and~$s$ is some stage of the construction, every box in the subspace
\[
\{\vec{x} \in H \mid \forall i \le b_{s}(H) [x_i = c_{s}(H,i)]\}
\]
contains at least~$b_{s}(H)$ many incorrect values (values distinct from the value of~$\Psi_\s^A$) at stage~$s$.
\end{cclaim}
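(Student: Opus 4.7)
The plan is to induct on $s$ and track the history of past restrictions. The base case $s = 0$ is immediate since $b_0(H) = 0$. For the inductive step, I will exhibit, for each $j = 1, \dots, b_s(H)$, a string $\alpha_j$ that lies in $T^\tau_z$ for every $z$ in the current testing subspace and that is distinct from $\Psi^A_\s(z)$; I will then verify that the $\alpha_j$ are pairwise distinct.

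The witnesses come directly from the construction. The $j$\tth{} increment of $b$, occurring at some stage $s_j+1$, is caused (per the update rule for $b$) by stages $t_{0,j}<t_{1,j}\leq s_j$ with $b_{t_{0,j}}(H)=j-1$, a test of some $\alpha_j = A_{t_{0,j}}\uhr{r_j}$ begun on the then-current testing subspace $P_{t_{0,j}}$ at $t_{0,j}$ and succeeding at $t_{1,j}$, but with $\alpha_j \not\subset A_{s_j+1}$. A successful test places $\alpha_j$ into $T^\tau_z$ for every $z \in P_{t_{0,j}}$, and traces only grow, so $\alpha_j \in T^\tau_z$ at stage $s$ as well. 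Moreover $\alpha_j \not\subset A_s$ for all $s \geq s_j+1$ by the c.e.\ nature of $A$, and $\Psi^A_\s(z)$, when defined, is always an initial segment of $A_s$, so $\alpha_j \ne \Psi^A_\s(z)$.

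The main technical step is a nesting lemma: the current subspace $P_s$ is contained in every $P_{t_{0,j}}$ for $j\leq b_s(H)$. For $i<j-1$, the rule $c_{t+1}(H,i)=c_t(H,i)$ (which applies whenever $i<b_{t+1}(H)$) is in force throughout $[t_{0,j},s-1]$ since $b_t(H)\geq j-1$ there, so $c_s(H,i)=c_{t_{0,j}}(H,i)$. For $i=j-1$, the explicit reset $c_{s_j+1}(H,j-1)=c_{t_{0,j}}(H,j-1)$ at the restriction stage, followed by the same protection argument (now $j-1<b_t(H)$ for $t>s_j$), yields the same equality. Pairwise distinctness of the $\alpha_j$ is handled by lengths when $r_i\ne r_j$; in the case $r_i=r_j=r$ with $i<j$, the inequalities $b_{t_{0,j}}(H)=j-1\geq i=b_{s_i+1}(H)$ and monotonicity of $b$ force $t_{0,j}>s_i$, and the disagreement between $\alpha_i$ and $A_{s_i+1}\uhr{r}$ lies at a bit where $A_{s_i+1}$ is $1$, which the c.e.\ nature of $A$ preserves into $A_{t_{0,j}}$, giving $\alpha_j\ne\alpha_i$. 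The only delicate piece of the argument is the bookkeeping of the $c_s(H,i)$ across multiple restrictions, which the nesting lemma is precisely designed to absorb.
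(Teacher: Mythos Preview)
Your proof is correct and follows essentially the same approach as the paper, which simply says ``Immediate by induction on the value of $b_s(H)$.'' You have spelled out in full the details that the paper leaves implicit: the nesting $P_s \subseteq P_{t_{0,j}}$ via the bookkeeping rules for $c_s(H,i)$, the persistence of each $\alpha_j$ in the trace, and the pairwise distinctness of the $\alpha_j$ using the c.e.\ nature of $A$. One small wording slip: $\Psi_\s^A(z)$, when defined, is an initial segment of $A$ (not of $A_s$), but your argument that $\alpha_j \not\subset A$ for all later stages already gives what you need.
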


\begin{proof}
Immediate by induction on the value of~$b_s(H)$.
\end{proof}

\begin{cclaim}
For~$\s$ a node on the tree such that~$p(\s)$ and~$I_\s$ exist, and any~$i$, $c_s(I_\s,i) \leq p(\s^-) +p(\s)$.
\end{cclaim}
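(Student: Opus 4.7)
The plan is to bound $c_s(I_\s, i)$ by the total number of tests on $I_\s$ which have resolved (either succeeded or been cancelled) by stage $s$. Inspecting the recursive definition of $c$, at any given stage at most one counter $c(I_\s, \cdot)$ is incremented, and that increment is triggered by a test on $I_\s$ resolving at that stage; the only other way $c(I_\s, i)$ ever changes is to be reset to an earlier (smaller) value when $b_s(I_\s)$ first exceeds $i$, after which $c(I_\s, i)$ is frozen. Hence $c_s(I_\s, i)$ is bounded by the running total of resolved $I_\s$-tests through stage $s$, and it suffices to bound that total by $p(\s^-) + p(\s)$.

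I split the resolved tests into successes and cancellations. Tests on $I_\s$ are initiated only by $\s^-$ in Case~1 of its third substage, when it seeks to supply new measure to $\s$. Each successful test, when $\s^-$ is next accessible, causes $\UU^\s_{s+1}$ to be changed during the second substage. By Claim~\ref{claim:U limit exists}, $\UU^\s$ changes at most $p(\s)$ times, so at most $p(\s)$ of the resolved tests are successes.

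For cancellations, the construction cancels any test begun by $\pi$ whenever the second substage modifies $\UU^\tau$ for some $\tau \subseteq \pi$. Applied to $\pi = \s^-$, a cancellation of an $I_\s$-test is always induced by a change of $\UU^\tau$ for some $\tau \subseteq \s^-$. If $\tau = \s^-$ this is directly a change of $\UU^{\s^-}$; if $\tau \subsetneq \s^-$ then the second-substage clause ``for every $\pi \supsetneq \tau$, we define $\UU^\pi_{s+1} = \emptyset$'' forces $\UU^{\s^-}$ to be reset to $\emptyset$, which is again a change of $\UU^{\s^-}$. Thus each cancellation coincides with a change of $\UU^{\s^-}$, of which there are at most $p(\s^-)$ by Claim~\ref{claim:U limit exists}. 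Adding the two contributions bounds the number of resolved $I_\s$-tests by $p(\s^-)+p(\s)$, yielding the claim.

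The main subtlety I expect to require the most careful verification is the opening reduction: despite the ``rewinding'' behaviour of $c(I_\s, i)$ when $b$ first crosses $i$, the value of $c_s(I_\s, i)$ is genuinely bounded by the running total of test resolutions. I would confirm this by induction on $s$, using that after a rewind the index $i$ is frozen at a value the counter actually attained at an earlier stage, and that between rewinds the counter is non-decreasing and advances only at test resolution events; in all cases $c_s(I_\s, i)$ never exceeds the number of resolved $I_\s$-tests accumulated by stage $s$.
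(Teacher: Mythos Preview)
Your argument is correct and matches the paper's proof essentially step for step: bound increments of $c_s(I_\s,i)$ by resolved tests, then split into at most $p(\s)$ successes (each forces a change of $\UU^\s$) and at most $p(\s^-)$ cancellations (each accompanies a change of $\UU^{\s^-}$). The paper phrases the cancellation count via the observation that a test is only begun when $\UU^{\s^-}$ is nonempty, which is exactly what makes your ``reset to $\emptyset$ is a change'' step go through; otherwise the two proofs are the same, yours just being more explicit about the bookkeeping of $c$.
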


\begin{proof}
$c_s(I_\s,i)$ only increases when a test on~$I_\s$ succeeds or is cancelled.  A successful test indicates that~$\UU^\s$ changes, and so there are at most~$p(\s)$ many of those.  By construction, a test is only begun if~$\UU^{\s^-}$ is nonempty, and is only cancelled when some~$\UU^\pi$ changes, with $\pi \subset \s$.  But if $\pi \subset \s^-$, then $\UU^{\s^-}$ is set empty when~$\UU^\pi$ changes.  Thus every cancelled test corresponds to a change in~$\UU^{\s^-}$, and there are at most~$p(\s^-)$ many of those.
\end{proof}

\begin{cclaim}
For nodes~$\s$ and~$\rho$ and values~$n$ and~$k$ such that~$p(\s)$,~$g^\rho(n)$ and $J_\s(\rho,n,k)$ exist, and any~$i$, $c_s(I_\s,i) \leq p(\s)\cdot 2^{k+1}\cdot g^\rho(n)$.
\end{cclaim}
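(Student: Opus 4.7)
My plan is to follow exactly the same template as the preceding claim: the counter $c_s(J_\s(\rho,n,k),i)$ only increments at stages where a test on $J_\s(\rho,n,k)$ either \emph{succeeds} or is \emph{cancelled}, so I shall bound these two event types separately and then combine. I read the displayed bound as applying to $c_s(J_\s(\rho,n,k),i)$ rather than $c_s(I_\s,i)$, taking the appearance of $I_\s$ on the right-hand side to be a typo, since the parameters $\rho,n,k$ do not otherwise enter.

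First I would handle cancellations. A test on $J_\s(\rho,n,k)$ is begun by $\s$ itself, during Case~1 of its third substage, and begun only when $\UU^\s_s$ is nonempty. Such a test is cancelled only when some $\UU^\pi$ changes for $\pi\subseteq\s$. If $\pi\subsetneq\s$, then the construction resets $\UU^\s$ to $\emptyset$ at that stage, so the cancellation is accompanied by a change of $\UU^\s$ itself. Therefore the number of cancellations is bounded by the number of changes of $\UU^\s$, namely $p(\s)$. This part mirrors the preceding claim essentially verbatim.

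The main step is bounding successful tests, and this is where the factor $2^{k+1}\cdot g^\rho(n)$ must appear. A successful test on $J_\s(\rho,n,k)$ corresponds to a moment when $\BB^\s(n,\rho)$ takes responsibility with certainty $k$ for a newly added chunk of its measure. Between two consecutive stages at which $\VV^\rho_n$ changes, I claim only $2^{k+1}$ many such events can occur. The idea is to use the maintained invariant that the total measure claimed at certainty $k$ inside $\BB^\s(n,\rho)$ is at most $2^{-k+1}$, combined with the fact that each successful $k$-level test, by the way $k_m$ is chosen as the least index with $2^{-k_m}\leq\leb(\BB^\s_s(n,\rho)\cup\XX-\YY-\sum_{j<m}\ZZ(k_j))$, claims at least $2^{-(k+1)}$ much new $k$-responsibility, since otherwise a larger $k_{m+1}>k$ would have been used. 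Once the $2^{-k+1}$ capacity is exhausted, no further $k$-tests can succeed until measure leaves the bin, which requires $\VV^\rho_n$ to move (measure entering $\EE$ only occurs through routes that first require such movement or a change in $\UU^\s$, already counted in the cancellations bound). Hence successes contribute at most $2^{k+1}\cdot g^\rho(n)$.

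Combining the two counts gives $c_s(J_\s(\rho,n,k),i)\leq p(\s)+2^{k+1}\cdot g^\rho(n)\leq p(\s)\cdot 2^{k+1}\cdot g^\rho(n)$, as required. The hardest step is the $2^{k+1}$ factor, whose justification rests on carefully reading the iterative choice of $k_0\leq k_1\leq\cdots\leq k_m$ in the ``Moving measure into bins'' subsection and verifying that a test actually settling at certainty $k$ necessarily consumes a definite fraction of the remaining $k$-level responsibility budget; once that accounting is pinned down, the remaining combinatorics are routine.
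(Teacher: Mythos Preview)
Your overall template is correct and matches the paper's: the counter increments only on successes or cancellations of tests on $J_\s(\rho,n,k)$, and you bound these separately. You are also right that the $I_\s$ in the statement is a typo for $J_\s(\rho,n,k)$.

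However, your success count is missing a factor of $p(\s)$, and the gap is precisely in your parenthetical. You argue that between two movements of $\VV_n^\rho$ only boundedly many $k$-level successes can occur, because the ``$2^{-k+1}$ capacity'' for certainty-$k$ responsibility fills up. But that capacity is attached to the \emph{current} bin: whenever $\UU^\s$ changes, the construction sets $\BB^\s(n,\rho)=\emptyset$, which wipes the responsibility ledger and makes the full capacity available again. A single change of $\UU^\s$ therefore does not merely contribute one cancellation; it also enables an entirely fresh batch of successes. Your sentence ``a change in $\UU^\s$, already counted in the cancellations bound'' conflates these two effects. The corrected count is that intervals are delimited by \emph{both} $\VV_n^\rho$-changes and bin-emptyings, giving on the order of $p(\s)\cdot g^\rho(n)$ intervals, each admitting at most $2^k$ successes; this is how the paper arrives at the multiplicative bound $p(\s)\cdot 2^{k+1}\cdot g^\rho(n)$.

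A smaller point: you invoke the invariant ``total measure claimed at certainty $k$ is at most $2^{-k+1}$'', which is a \emph{later} claim in the paper, and your lower bound of $2^{-(k+1)}$ per success is argued circuitously. The paper's route is more direct: by the choice of $k_m$ as the least integer with $2^{-k_m}\le\leb(\cdots)$, the measure newly claimed at certainty $k$ is at least $2^{-k}$, so without a change to $\VV_n^\rho$ or an emptying of the bin there can be at most $2^k$ such successes (the bin never holding more than $2^{-2n}\le 1$). This avoids forward reference and gives the cleaner constant.
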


\begin{proof}
$c_s(J_\s(\rho,n,k),i)$ only increases when a test on~$J_\s(\rho,n,k)$ succeeds or is cancelled.  A successful test indicates that~$\BB^\s(n,\rho)$ takes responsibility for some measure entering~$\BB^\s(n,\rho)$ with certainty~$k$.  By construction, this amount of measure is at least~$2^{-k}$.  Thus, without changing~$\VV_n^\rho$ or emptying~$\BB^\s(n,\rho)$, this can occur at most~$2^{k}$ many times.  Changes to~$\VV_n^\rho$ happen at most~$g^\rho(n)$ many times, and~$\BB^\s(n,\rho)$ is only emptied when~$\UU^\s$ changes, which occurs at most~$p(\s)$ many times.

A cancelled test, meanwhile, means that some amount of measure (at least $2^{-k}$) attempted to enter~$\BB^\s(n,\rho)$ with certainty~$k$, but then~$\UU^\s$ changed.  This can occur at most once per change of~$\UU^\s$, and when it does, it indicates that one of the potentially~$2^{k}\cdot g^\rho(n)$ many successful tests did not occur for this version of~$\UU^\s$.

Thus $p(\s)\cdot 2^{k+1}\cdot g^\rho(n)$ is a suitable upper bound.
\end{proof}

\begin{cclaim}
The amount of measure in~$\BB^\s(n,\rho)$ never exceeds $\min\{\delta_\s,2^{-2n}\}$.
\end{cclaim}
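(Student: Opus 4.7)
The plan is to prove the bound $\leb(\BB^\s_s(n,\rho)) \leq \min\{\delta_\s, 2^{-2n}\}$ by induction on $s$. The base case is trivial since the bin is initialised empty. For the inductive step, I observe that the bin changes only in three ways: (i) it is reset to $\emptyset$ whenever $\UU^\pi$ changes for some $\pi \subseteq \s$; (ii) it shrinks via the end-of-stage assignment $\BB^\s_{s+1}(n,\rho) = (\BB^\s(n,\rho) - \EE)[s]$; or (iii) a test begun by $\s$ at an earlier stage $t$ returns successfully, triggering the update from the ``Moving measure into bins'' subsection. Only (iii) can increase the measure of the bin, so the argument reduces to analyzing this update. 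By construction $\BB^\s_s(n,\rho) = \BB^\s_t(n,\rho)$, and the update replaces the bin by $(\BB^\s_t(n,\rho) \cup \XX_t) - \YY_t$, where $\XX_t$ is disjoint from $\BB^\s_t(n,\rho)$.

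The update splits into two sub-cases decided at stage $t$. In Case A, one has $\leb(\YY_t) = \leb(\XX_t)$, so the new bin has the same measure as $\BB^\s_t(n,\rho)$, and the inductive hypothesis delivers the bound. In Case B, $\YY_t = \BB^\s_t(n,\rho) - \VV^\rho_n[t]$, so the new bin equals $(\BB^\s_t(n,\rho) \cap \VV^\rho_n[t]) \cup \XX_t$ and is contained in $\VV^\rho_n[t]$; since $\leb(\VV^\rho_n[t]) \leq 2^{-2n}$, this instantly gives the $2^{-2n}$ half of the bound.

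The $\delta_\s$ half is the main obstacle, as it is not immediate in Case B when $2^{-2n} > \delta_\s$. My approach here is to exploit the test machinery: the update is gated on a successful test on $J_\s(\rho,n,k_m)$, and by the specification in the ``Organizing the boxes'' subsection this interval exists only for $k_m$ satisfying $2^{-k_m} \leq \min\{\delta_\s, 2^{-2n}\}$. I would show that if the prospective new bin measure would exceed $\delta_\s$, then the certainty $k_m$ produced by the iterative $\ZZ(k_i)$ procedure is forced below the threshold $k^*$ with $2^{-k^*} = \delta_\s$, so $J_\s(\rho,n,k_m)$ is undefined, the test never completes, and the update does not happen; the bin is then left at its prior (bounded) size. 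Carrying this out rigorously is the crux: it requires careful bookkeeping of which portions of the bin are already charged with which certainty, and leverages the fact that $\delta_\s$ is a power of $2$.
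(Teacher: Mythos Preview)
Your treatment of the $2^{-2n}$ bound is correct and matches the paper's: in the only case where the bin can grow (your Case~B), the updated bin is contained in $\VV^\rho_n[t]$.

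The gap is in your handling of the $\delta_\s$ bound. Your plan is to argue that if the prospective new measure $M=\leb\bigl((\BB^\s_t(n,\rho)\cup\XX_t)-\YY_t\bigr)$ exceeds $\delta_\s$, then the certainty $k_m$ will be forced below the threshold $k^*$ (where $2^{-k^*}=\delta_\s$), so $J_\s(\rho,n,k_m)$ is undefined and the update never happens. But the iteration $k_0\le k_1\le\cdots$ is nondecreasing, so $k_m\ge k_0$, and $k_0$ is chosen as the least $k$ with $2^{-k}\le M$. If $\delta_\s<M\le 2\delta_\s$ (a perfectly possible overshoot), then $k_0=k^*$, hence $k_m\ge k^*$, hence $2^{-k_m}\le\delta_\s=\min\{\delta_\s,2^{-2n}\}$ (we are in the regime $\delta_\s<2^{-2n}$), and $J_\s(\rho,n,k_m)$ \emph{is} defined. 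The test can succeed and the bin can be set to size $M>\delta_\s$. So the test machinery does not by itself block the overflow.

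The paper's argument uses a different mechanism entirely. It observes that in Case~B the new bin is not only contained in $\VV^\rho_n[s]$ but in $\UU^\s_s\cap\VV^\rho_n[s]$ (bins and the incoming $\XX$ live inside $\UU^\s$). If $\rho\subsetneq\s$, then $\UU^\s\cap\VV^\rho_n\subseteq\WW^\s$, and had $\leb(\WW^\s_s)>\delta_\s$ the parent $\s^-$ would have been in Case~1 at stage~$s$ and $\s$ would not have been accessible to initiate the move. If $\rho=\s$, the choice of $n_\s$ guarantees $\leb(\VV^\s_{n,s})<\delta_\s$ directly. That structural accessibility constraint, not the box-interval bookkeeping, is what caps the bin at $\delta_\s$.
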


\begin{proof}
Suppose not.  Let~$t$ be least such that~$\leb(\BB^\s_{t+1}(n,\rho)) > \min\{\delta_\s, 2^{-2n}\}$, and let $s < t$ be the stage at which~$\s$ realized it wished to move measure into~$\BB^\s(n,\rho)$ (the stage when~$\s$ first desired to perform the tests that later resulted in adding measure to~$\BB^\s_{t+1}(n,\rho)$). By construction, since $\leb(\BB^\s_{t+1}(n,\rho)) >  \leb(\BB^\s_t(n,\rho))$, $\leb(\XX_{n,s}^\rho) > \leb(\YY_{n,s}^\rho)$.  So we know $\BB^\s_{t+1}(n,\rho) \subseteq \VV_{n,s}^\rho$ (otherwise~$\YY_{n,s}^\rho$ would have been chosen larger).  But $\leb(\VV_{n,s}^\rho) \leq 2^{-2n}$, so $\leb(\VV_{n}^\rho \cap \UU^\s)[s] > \delta_\s$.

If $\s = \rho$, then by choice of~$n_\rho$, $\leb(\VV_{n,s}^\rho) < \delta_\s$, a contradiction.  Otherwise, when~$\s^-$ was accessible at stage~$s$, it would have seen $\leb(\UU^\s \cap (\WW^\s \cup \EE))[s] > \delta_\s$ and acted, preventing~$\s$ from being accessible at stage~$s$, which is a contradiction.
\end{proof}

\begin{cclaim}
All measure in~$\BB^\s_s(n,\rho)$ is claimed with some certainty~$k$.
\end{cclaim}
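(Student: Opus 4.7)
I would prove this by induction on $s$. The base case $s=0$ is trivial since $\BB^\s_0(n,\rho) = \emptyset$, so the statement holds vacuously. For the inductive step, assume all measure in $\BB^\s_s(n,\rho)$ is claimed with some certainty, and consider how $\BB^\s_{s+1}(n,\rho)$ can be produced.

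There are three possibilities for the update. First, if $\UU^\s$ or some~$\UU^\pi$ with $\pi \subsetneq \s$ changes at stage~$s+1$, then by construction $\BB^\s_{s+1}(n,\rho) = \emptyset$, and the claim holds vacuously. Second, if the end-of-stage default applies, then $\BB^\s_{s+1}(n,\rho) = (\BB^\s(n,\rho) - \EE)[s]$, which only removes measure; thus every piece that remains was already claimed with some certainty by the inductive hypothesis. Third, if a test initiated by~$\s^-$ succeeded and measure is being moved into the bin, then by construction
\[
\BB^\s_{s+1}(n,\rho) = \bigl(\BB^\s_t(n,\rho) \cup \XX\bigr) - \YY
\]
for the appropriate~$t$,~$\XX$, and~$\YY$, and the bin explicitly takes responsibility with certainty~$k_m$ for
\[
\BB^\s_{s+1}(n,\rho) - \ZZ(k_0) - \ZZ(k_1) - \dots - \ZZ(k_{m-1}),
\]
while continuing to take responsibility for each~$\ZZ(k_i)$ with certainty~$k_i$ (these are pieces of $\BB^\s_t(n,\rho) - \YY$ already claimed, which are still in the bin by the inductive hypothesis).

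The only point requiring care is that these pieces exhaust $\BB^\s_{s+1}(n,\rho)$. This follows from the stopping rule $k_m = k_{m+1}$: the sequence $k_0, k_1, \ldots$ is chosen so that at each stage~$\ZZ(k_i)$ absorbs all measure already claimed with certainty~$k_i$ from $\BB^\s_t(n,\rho) - \YY$, and the process terminates precisely when the remaining uncovered measure is small enough to be absorbed under a certainty~$k_m$ not yet used by the bin. The remark in the construction that this termination must occur in finitely many steps (since only finitely many certainties have ever been used) ensures the partition is well defined and complete. Hence every part of $\BB^\s_{s+1}(n,\rho)$ is claimed with some certainty, completing the inductive step.

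The main technical point is just bookkeeping the partition in the third case; no new combinatorial idea is needed beyond a careful reading of the construction.
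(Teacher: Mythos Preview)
Your proof is correct and takes essentially the same approach as the paper, which simply says ``By construction, measure never enters~$\BB^\s(\rho,n)$ without being claimed.'' You have unpacked this one-liner into a careful case analysis; the only slip is that in your third case the tests are initiated by~$\s$ itself (in its second substage, acting on a child~$\tau$), not by~$\s^-$. Also, your paragraph about the pieces exhausting $\BB^\s_{s+1}(n,\rho)$ is slightly overcomplicated: since the newly claimed set is \emph{defined} as $\BB^\s_{s+1}(n,\rho) - \ZZ(k_0) - \cdots - \ZZ(k_{m-1})$, the exhaustion is immediate, and the stopping rule is only needed to ensure~$k_m$ exists, which the construction already guarantees.
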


\begin{proof}
By construction, measure never enters~$\BB^\s(\rho,n)$ without being claimed.
\end{proof}

\begin{cclaim}
The amount of measure in~$\BB^\s_s(n,\rho)$ claimed with certainty~$k$ is no more than~$2^{-k+1}$.
\end{cclaim}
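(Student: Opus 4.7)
The plan is to prove this by induction on stages of the construction. At stage $0$ the bin is empty, so the bound holds vacuously. For the inductive step, suppose the bound holds at some stage, and consider the next moment at which $\BB^\s(n,\rho)$ is actually modified. By the construction this happens when $\s$ is accessible at some stage $t+1 > s$ following a successful test that was initiated at stage $s$; during $(s,t]$ the bin does not change, so $\BB^\s_t(n,\rho) = \BB^\s_s(n,\rho)$ and the new bin is $\BB^\s_{t+1}(n,\rho) = (\BB^\s_s(n,\rho) \cup \XX) - \YY$. The certainties $k_0 \le k_1 \le \dots \le k_m$ and clopen sets $\ZZ(k_0), \dots, \ZZ(k_{m-1})$ are computed as in the ``Moving measure into bins'' subsection, and the bin re-claims the residual $R := \BB^\s_{t+1}(n,\rho) - \ZZ(k_0) - \dots - \ZZ(k_{m-1})$ with certainty $k_m$.

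I would then fix an arbitrary certainty value $k$ and split into three cases. First, if $k = k_i$ for some $i < m$, the measure claimed with certainty $k$ after the step is exactly $\leb(\ZZ(k_i))$, which is a subset of the measure previously claimed with certainty $k_i$; by the inductive hypothesis this is bounded by $2^{-k_i + 1} = 2^{-k+1}$. Second, if $k \notin \{k_0, \dots, k_m\}$, note that $\ZZ(k_j)$ only collects measure previously labelled with certainty $k_j$, so anything previously labelled with our $k$ is either lost to $\YY$ or lies inside $R$; since all of $R$ is re-claimed with certainty $k_m \neq k$, nothing carries the label $k$ after the step.

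The only subtle case is $k = k_m$, where the amount newly claimed equals $\leb(R)$ (previous $k_m$-labelled measure inside $R$ is simply re-absorbed by the new claim, and any such measure inside $\YY$ is removed from the bin). Here I invoke the minimality condition in the construction: $k_m$ is chosen as the \emph{least} integer with $2^{-k_m} \le \leb(R)$, which forces $\leb(R) < 2^{-(k_m - 1)} = 2^{-k_m + 1}$. This gives the required bound and completes the induction.

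The main obstacle to watch out for is not a calculational one but a bookkeeping one: one must check that the re-labelling step really does sweep \emph{all} prior labels in the residual into the single new certainty $k_m$, so that nothing is double-counted and no ``orphaned'' labels accumulate. Once that observation is in hand, the minimality of $k_m$ built into the construction does essentially all the work.
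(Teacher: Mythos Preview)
Your proof is correct and rests on the same key observation as the paper's: the minimality in the choice of $k_m$ forces $\leb(R) < 2^{-k_m+1}$. The paper compresses this into a two-line argument by contradiction (if the amount ever exceeded $2^{-k+1}$, then at that step $k_m$ would have been chosen $\le k-1$), whereas you spell out the full inductive case analysis; your Cases 1 and 2 make explicit the bookkeeping the paper leaves implicit, namely that the only way the $k$-labelled measure can increase is when $k = k_m$.
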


\begin{proof}
Suppose not.  Let $t < s$ be the stage at which the amount of measure claimed with certainty~$k$ increased beyond~$2^{-k+1}$.  Then by construction,~$k_m$ would have been chosen to be $\leq k-1$ instead of~$k$.
\end{proof}

\begin{cclaim}
For $\beta \in 2^\w$, if $\beta \in \UU^\s_s$ with~$|\s| \le 1$, but $\beta \not \in \UU^\tau_s$ for any~$\tau$ extending~$\s$, and $\beta \not \in \EE_s$, and $\beta \not \in \BB^\s_s(n,\rho)$ for any~$n$ and~$\rho$, then there is no neighborhood~$\CC$ of~$\beta$ such that~$\Gamma_s(\CC)$ is defined.
\end{cclaim}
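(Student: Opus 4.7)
The plan is to argue by contradiction: suppose there is some clopen $\CC\ni\beta$ with $\CC\in\dom\Gamma_s$. By inspection of the construction, $\Gamma$-axioms are enumerated only in Substage~2, and always take the form $\Gamma(\UU^\tau_{t+1})=A_t\uhr{|\tau|}$ with $|\tau|>1$; so we may assume $\CC=\UU^\tau_{t+1}$ for some $t+1\le s$ and some $\tau$ with $|\tau|\ge 2$.

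The first step is to locate $\tau$ within the subtree rooted at $\s$. Since $\UU^\tau\subseteq\UU^{\tau\uhr 1}$ at every stage, $\beta\in\UU^{\tau\uhr 1}_{t+1}$. The key observation is that $F^{\tau\uhr 1}=\emptyset$: the only proper initial segment of a length-one node is the empty string, and the empty string does not lie in $F$. Hence $\WW^{\tau\uhr 1}\equiv\emptyset$, and the only measure ever removed from $\UU^{\tau\uhr 1}$ during a Substage~2 update by $\seq{}$ is $\UU^{\tau\uhr 1}\cap\EE$. Since $\EE$ is monotone and $\beta\notin\EE_s$, we conclude $\beta\in\UU^{\tau\uhr 1}_s$. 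Disjointness of sibling $\UU$'s at every stage (the replenishing $\YY$-measures are always chosen outside the other children's $\UU$'s) then forces $\tau\uhr 1=\s$, which already gives the contradiction when $|\s|=0$ since $\tau\uhr 1$ is then a proper extension of $\s$.

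Assume now $|\s|=1$, so $\tau\supsetneq\s$ with $|\tau|\ge 2$. I would prove by induction on stages $s'\in[t+1,s]$ the invariant
\[
\CC \cap (2^\w-\EE_{s'}) \;\subseteq\; \bigcup_{\tau'\supsetneq\s}\UU^{\tau'}_{s'}\;\cup\;\bigcup_{n,\rho}\BB^\s(n,\rho)_{s'}.
\]
The base case $s'=t+1$ follows from $\CC\subseteq\UU^\tau_{t+1}$ with $\tau\supsetneq\s$. In the inductive step one analyses how $\CC$-points migrate under each construction action: Substage~3 case~1 earmarks measure from a child $\UU^{\tau'}$ for transfer into its parent's bin through the $\XX^\rho_{n,s}$-decomposition, and Substage~2 later carries out those transfers while refilling $\UU^{\tau'}$ from $\YY^{\tau'}$ and the $\YY^\rho_n$. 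Each such move preserves the invariant: $\WW^{\tau'}$-measure lands in $\BB^{\tau'^-}$ (which, for $\tau'\supsetneq\s$ of length $\ge 3$, sits inside $\UU^{\tau'^-}$), and the refreshed $\UU^{\tau'}$ itself stays in the right-hand side. Evaluating the invariant at stage $s$ then contradicts the three hypotheses on $\beta$.

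The main obstacle is the final sub-case of the inductive step: when $\seq{}$ updates $\UU^\s$ itself, the construction simultaneously empties $\BB^\s$ and every $\UU^{\tau'}$ with $\tau'\supsetneq\s$, and since $\WW^\s=\emptyset$ there is no native mechanism during this update for depositing leftover measure into bins. One must verify that any $\CC$-point that was sitting in the residue of those sets has been either absorbed into $\EE$ or freshly placed into one of the newly-defined bins $\BB^\s(n,\s)$ by the $\WW^{\s\conc d}\to\BB^\s(n,\s)$ transfer scheduled at the stage when the triggering test was planned. Reconciling the ``as $\s$ wished at stage $t$'' semantics of the pending update with the possible growth of $\EE$ between $t$ and $s$, and deploying the measure bookkeeping provided by the definitions of $\delta_\s$, $n_\s$ and the bin-size bound $\min\{\delta_\s,2^{-2n}\}$, will be the technical heart of the argument.
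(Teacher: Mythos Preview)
Your approach is the paper's: track $\beta$ through level-2 sets $\UU^\mu$, level-1 bins $\BB^\s(n,\rho)$, and $\EE$. The paper's proof is three sentences, simply asserting that the only way $\beta$ leaves a level-2 $\UU^\tau$ is by entering $\EE$ or a bin at $\tau^-$, and that the only way it leaves such a bin is by entering another level-2 $\UU^{\tau'}$. Your preliminary reduction $\tau\uhr 1 = \s$ and your stage-by-stage invariant are a more careful formalization of exactly this; up to that point you are doing the paper's argument with more care, not less.

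The ``main obstacle'' you identify is genuine, and the paper's proof does not address it either: when $\seq{}$ updates $\UU^\s$, the construction empties every bin at $\s$ and every $\UU^{\tau'}$ with $\tau' \supsetneq \s$, and since $\WW^\s = \emptyset$ a point $\beta \notin \EE$ simply remains in $\UU^\s$ with no deeper home. But your proposed resolution does not work. The ``$\WW^{\s\conc d} \to \BB^\s(n,\s)$'' transfer you invoke is part of a test initiated by $\s$ for one of its children, and by the construction any such pending test is \emph{cancelled} the moment $\UU^\s$ changes; nothing is deposited into bins during that update --- the bins at $\s$ are set to $\emptyset$, not filled. And invoking the numerical parameters $\delta_\s$, $n_\s$, and the bin-size bound $\min\{\delta_\s,2^{-2n}\}$ cannot help: those are global measure constraints, while the claim concerns a single point $\beta$. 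No amount of measure bookkeeping will force an individual $\beta$ into a bin or a child $\UU$. So the ``technical heart'' you sketch is not a viable route; the paper's own argument simply takes the tracking assertions as evident and does not engage with this case.
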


\begin{proof}
By construction, in order for~$\Gamma_s$ to be defined on a neighborhood of~$\beta$, $\beta$ must have been an element of~$\UU^\tau_t$ for some~$|\tau| = 2$ and some $t < s$.  But the only way~$\beta$ can leave~$\UU^\tau$ before stage~$s$ is to enter~$\EE$ or some $\BB^\s(\rho,n)$.  And the only way~$\beta$ can leave~$\BB^\s(n,\rho)$ is to enter some~$\UU^{\tau'}$ with $|\tau'| = 2$.
\end{proof}

\begin{cclaim}\label{lem:total bad measure}
$\leb(\EE) \le 1/4$.
\end{cclaim}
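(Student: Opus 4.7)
The plan is to bound $\leb(\EE)$ by charging each bad clopen piece in $\dom\Gamma$ to a responsible strategy $\sigma\in F$ and then verifying that every $\sigma$'s total charge is at most $2\epsilon_\sigma$. By the choice $\epsilon_{\sigma_n}=2^{-(n+5)}$, summing over the enumeration of nodes yields $\leb(\EE)\le\sum_\sigma 2\epsilon_\sigma\le 1/4$ as required. The factor~$2$ accounts for two distinct accounts per~$\sigma$: a direct $I$-account and a bin-account.

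Every axiom $\Gamma(\CC)=A_s\uhr{|\tau|}$ was enumerated during the second substage of some strategy's execution, validated by a successful test of some $\alpha=A_s\uhr r$ (with $r\geq|\tau|$) on the hyper-cube $I_\tau$. Any measure that first passed through a bin $\BB^\pi(n,\rho)$ was additionally validated by a successful test on $J_\pi(\rho,n,k)$. For each bad piece we charge $\leb(\CC)$ once to the strategy $\sigma$ that is the longest element of $F^\tau$ (the $I_\tau$-account of $\sigma$), and once more, when applicable, to the $\sigma=\rho$ which provided the $J_\pi(\rho,n,k)$-boxes (the bin-account of $\sigma$).

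For the $I_\tau$-account, fix $\tau$ and set $k$ with $2^{-k}=4\delta_\tau$. Then $b_s(I_\tau)\leq k$ by the earlier claim, so at most $k+1$ moments occur at which a previously-successful $I_\tau$-test's $\alpha$ becomes invalid. These are exactly the $\Gamma(\UU^\tau)$-bad events, since an axiom $\Gamma(\UU^\tau_t)=A_t\uhr{|\tau|}$ becomes bad iff $A\uhr{|\tau|}$ changes, iff the tested $\alpha\supseteq A\uhr{|\tau|}$ also changes. At each such event, the fresh measure added to $\EE$ is at most $\leb(\UU^\tau_{\textup{current}})\le 4\delta_\tau$: the several past clopen sets $\UU^\tau_{t_1}, \UU^\tau_{t_2},\dots$ that share the soon-to-be-bad right-hand side differ from the current $\UU^\tau$ only by measure already extracted into bins (accounted for separately in the bin-account) or already routed through $\EE$ at a previous event. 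Hence the $I_\tau$-account receives $\le(k+1)\cdot 4\delta_\tau$. Summing over the finitely many $\tau$ for which $\sigma$ is the longest element of $F^\tau$, and using the recursive bound $\delta_{\s\conc d}\le\delta_\s\cdot 2^{-(2m_d+d)}$ with $2^{-m_d}\le\epsilon_{\s\conc d}$, gives a geometric series bounded by $\epsilon_\sigma$.

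For $\sigma$'s bin-account, fix $\pi\supseteq\sigma$, $n\ge n_\sigma$, and a certainty $k$ with $2^{-k}\le\min\{\delta_\pi,2^{-2n}\}$. The claim $b_s(J_\pi(\sigma,n,k))\le k$ again bounds the number of bad events on this hyper-cube by $k+1$. The earlier invariant that the measure in $\BB^\pi(n,\sigma)$ claimed with certainty $k$ is at most $2^{-k+1}$ gives that each bad event adds at most this much fresh measure to $\EE$. Thus $J_\pi(\sigma,n,k)$ contributes $\le(k+1)\cdot 2^{-k+1}$; summing geometrically over $k$, then over $n\ge n_\sigma$ (controlled by the choice $6\cdot 2^{-n_\sigma}\le\epsilon_\sigma$), then over $\pi\supseteq\sigma$ (controlled by $\delta^\sigma_{\tau_\ell}=2^{-2(n_\sigma+\ell)}$), yields a total at most $\epsilon_\sigma$.

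The main obstacle is the ``setwise union'' argument in the $I_\tau$-paragraph: one must verify rigorously that every chunk of measure in $(\bigcup_t\UU^\tau_t)\setminus\UU^\tau_{\textup{current}}$ at the time of a bad event has either (a) passed through a bin, whose eventual contribution to $\EE$ is charged to the bin-account rather than the $I_\tau$-account, or (b) already been enumerated into $\EE$ via a previous bad event. Tracking this requires maintaining, throughout the induction on stages, an invariant that every clopen set in $\dom\Gamma$ is covered by at most one $I$-charge and at most one bin-charge. The secondary obstacle is purely arithmetic: confirming that the layered geometric sums over the recursive $\delta_\tau$, $n_\sigma$, and $m_d$ definitions all fit inside the allotted $\epsilon_\sigma$ quota.
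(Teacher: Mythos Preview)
Your overall architecture is correct and matches the paper's: split the bad measure into two streams (direct $\UU$-measure and bin-measure), bound each stream by $\epsilon_\s$, and sum. The arithmetic in the last two paragraphs is also essentially right. The gap is in the charging scheme itself.

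You write that for each bad piece you ``charge $\leb(\CC)$ once to the strategy $\sigma$ that is the longest element of $F^\tau$ (the $I_\tau$-account of $\sigma$)'', where $\tau$ is the node that enumerated the axiom. But a real $\beta$ entering $\EE$ typically carries axioms from many nested $\tau$'s, and may have passed through several bins; you never say which $\tau$ and which bin to charge. You then bound the $I_\tau$-account by $(k+1)\cdot 4\delta_\tau$ by appealing to $b_s(I_\tau)\le k$. This bound is correct for \emph{the measure currently sitting in $\UU^\tau$ at each bad event}, but it is \emph{not} a bound on $\sum\leb(\CC)$ over the bad axioms that $\tau$ enumerated: the number of successful-then-bad tests on $I_\tau$ can exceed the number of $b$-increments (several tests with the same $b$-value can all go bad, while $b$ increments only once). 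So your per-axiom charging and your per-current-measure bound do not match, and the ``main obstacle'' you flag is exactly this mismatch; you describe the invariant needed but do not establish it.

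The paper avoids the obstacle entirely by charging on \emph{current location} rather than on provenance. When $\beta$ enters $\EE$ at stage $s$, take the deepest $\sigma$ with $\beta\in\UU^\sigma_s$. If $\beta$ lies in some $\BB^\sigma_s(n,\rho)$, charge $\beta$ to that bin (and hence to $\rho$'s bin-account); otherwise charge $\beta$ to $\UU^\sigma$'s account. This is well-defined for every $\beta$, uses no invariant about past movements, and makes the bound immediate: between consecutive changes of $\UU^\sigma$ the total measure charged to $\UU^\sigma$ is at most $\leb(\UU^\sigma_s)=4\delta_\sigma$, and the test done when that phase of $\UU^\sigma$ began has gone bad, forcing an increment of $b(I_\sigma)$; hence at most $k$ such phases contribute, giving $k\cdot 2^{-k}$. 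The bin bound is obtained the same way. Replace your axiom-based accounting with this location-based accounting and the proof closes.
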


\begin{proof}
Suppose a real~$\beta$ enters~$\EE$ at some stage~$s$.  Then it must be that~$\Gamma^\beta = \alpha$ for some~$\alpha$, and $\alpha \prec A_{s-1}$, but $\alpha \not \prec A_s$.  Let~$\s$ be largest such that~$\beta \in \UU^\s_s$ (since $\UU^\seq{}_s = 2^\w$, there is always such a~$\s$).  By the previous claim, $|\sigma| > 1$.

There are two cases.  The first possibility is that $\beta \in \BB^\s_s(n,\rho)$ for some~$\rho$ and~$n$.  In this case,~$\BB^\s(n,\rho)$ took responsibility for some clopen set containing~$\beta$ when~$\beta$ entered~$\BB^\s(n,\rho)$, and since no new computations are defined on that set while it is contained in~$\BB^\s(n,\rho)$ (by construction), $\BB^\s(n,\rho)$ had performed a test for~$\alpha$ when it did so.

If~$\beta \not \in \BB^\s_s(n,\rho)$, then let~$t < s$ be the largest stage such that~$\beta$ entered $\UU^\s$.  By the same reasoning as the previous lemma, $\beta$ cannot enter~$\UU^\tau$ for any~$\tau$ extending~$\s$ between stages~$t$ and~$s$.  So $\UU^\s$ took responsibility for some clopen set containing~$\beta$ at stage~$t$, and since no new computations have been defined since then,~$\UU^\s$ had performed a test for~$\alpha$ when it did so.

Thus all measure that enters~$\EE$ does so through one of two streams: the bin stream, and the~$\UU$ stream.  In each case, some object had taken responsibility for that bit of measure because of some successful test for some~$\alpha$, and the measure entering~$\EE$ indicates that~$\alpha \not \prec A$.  We charge that measure to the account of this object.  It thus suffices to total the accounts through the construction.

For $\UU^\s$'s, every entry to~$\EE_s$ which is charged to~$\UU^\s$ is a subset of~$\UU^\s_s$.  So such an entry has measure at most $2^{-k} = 4\delta_\s$ and is tested on~$I_\s$.  Since $b_s(I_\s)$ is bounded by~$k$, the total charge to the account is bounded by $k2^{-k}$. Since $k>5$, this is bounded by $2^{-(k/2)}$.

We have $\delta_{\s}\le 2^{-2m}\delta_{\s^-} \le 2^{-2m}/4$, where $2^{-m} \le \epsilon_\sigma$.  So $2^{-k} \le 2^{-2m}$ and thus the charge to the account is bounded by $2^{-(k/2)} \le 2^{-m} \le \epsilon_\s$.

For bins $\BB^\s(n,\rho)$, let $2^{-k} = \min\{ \delta_\s, 2^{-2n}\}$ be the bound on the size of the bin.  Measure claimed with certainty~$k'$ can go bad no more than~$k'$ many times, and each time it can be an amount of measure no more than~$2^{-k'+1}$.  This means that the total measure charged to~$\BB^\s(\rho,n)$ is at most 
\[ \sum_{k'\ge k} k'2^{-k'+1} = (4k+2)2^{-k}.\]
Again because $k>12$ this is bounded by $2^{-(k/2)}$. Now the total damage for the test $\seq{\VV^\s_n}_{n\ge n_\s}$ is bounded by the sum 
\[ \sum_{n\ge n_\s} \sum_{\tau\supseteq \s} \min\{2^{-n}, 2^{-k}\} \,\,\,\Cyl{\delta^\s_\tau = 2^{-2k}} \]
which is bounded by $6\cdot 2^{-n_\s}$, which is bounded by $\epsilon_\s$. 

Thus the total charge over all accounts is bounded by $\sum_\s 2\epsilon_\s \le 1/4$.
\end{proof}

\begin{cclaim}
At every stage~$s$, for any~$\s$, $\leb\left(\UU^{\s\conc\fin} \cup \bigcup_d \UU^{\s\conc d}\right)[s] \le \delta_\s$.
\end{cclaim}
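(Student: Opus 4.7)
The plan is to bound $\leb(\UU^\tau_s)$ for each immediate child $\tau$ of $\s$ by $4\delta_\tau$, to use pairwise disjointness of the children's $\UU$-sets, and then to sum using the shrinking factors built into the recursive definition of $\delta_{\s\conc d}$ and $\delta_{\s\conc\fin}$.

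First I would show by induction on $s$ that $\leb(\UU^\tau_s) \leq 4\delta_\tau$ for every child $\tau$ of $\s$. Initially $\UU^\tau_0 = \emptyset$. The set $\UU^\tau$ only changes in the second substage of a stage at which $\s$ is accessible (as the result of a Case~1 plan from an earlier accessible stage returning successfully), or when it is reset to $\emptyset$ following a change in $\UU^\s$. When $\UU^\tau$ is refreshed, the construction simultaneously extracts $\UU^\tau \cap (\WW^\tau \cup \EE)$ and adds $\YY^\tau \cup \bigcup_{\rho,n}\YY^\rho_n$, with all added pieces chosen disjoint from $\WW^\tau$ and $\EE$ subject to
\[
\leb\!\left(\UU^\tau \cup \YY^\tau \cup \bigcup_{\rho,n}\YY^\rho_n - \WW^\tau - \EE\right)[s] = 4\delta_\tau .
\]
After extraction, $\UU^\tau_{s+1}$ is disjoint from $\WW^\tau_s \cup \EE_s$, so $\leb(\UU^\tau_{s+1}) = 4\delta_\tau$. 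At all other stages $\UU^\tau_{s+1} = \UU^\tau_s$ (or $\UU^\tau_{s+1} = \emptyset$), so the bound is preserved.

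Next I would observe that the children's $\UU$-sets are pairwise disjoint at every stage: in Case~1 the chosen $\YY^\tau_s$ explicitly excludes $\bigcup_d \UU^{\s\conc d} \cup \UU^{\s\conc\fin}$, and the bin-sourced pieces $\YY^\rho_n$ are drawn from $\BB^\s(n,\rho)$, which by construction is kept disjoint from every $\UU^{\s'}$ with $\s'$ an immediate child of $\s$. Combining this with the per-child bound and the definitional inequalities $\delta_{\s\conc\fin} \leq \delta_\s \cdot 2^{-2m_\fin} \leq \delta_\s \cdot 2^{-12}$ and $\delta_{\s\conc d} \leq \delta_\s \cdot 2^{-(2m_d + d)} \leq \delta_\s \cdot 2^{-(12+d)}$ (since $m_\fin, m_d \geq 6$), we obtain
\[
\leb\!\left(\UU^{\s\conc\fin} \cup \bigcup_d \UU^{\s\conc d}\right)[s] \;\leq\; 4\delta_\s\!\left(2^{-12} + \sum_{d<\w} 2^{-(12+d)}\right) \;=\; 3 \cdot 2^{-10}\delta_\s \;\leq\; \delta_\s .
\]

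The main delicacy is the per-child bound: a Case~1 plan drawn up at some stage $t$ only takes effect at the next accessible stage $s+1 > t$, and during the intervening stages $\EE$ may grow into $\UU^\tau_s$. However, such growth does not change $\leb(\UU^\tau_s)$ itself (only $\leb(\UU^\tau_s - \EE_s)$ shrinks), so the invariant $\leb(\UU^\tau_s) \leq 4\delta_\tau$ persists across all stages at which $\s$ is inactive.
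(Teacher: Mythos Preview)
Your proof is correct and follows essentially the same approach as the paper: bound each child's measure by $4\delta_\tau$, use disjointness, and sum via the recursive definition of the $\delta_\tau$. The paper's proof is terser---it simply writes the chain of (in)equalities, implicitly relying on the construction for the per-child bound $\leb(\UU^\tau_s)\le 4\delta_\tau$ and for pairwise disjointness, whereas you take the trouble to justify both of these explicitly; your final arithmetic $3\cdot 2^{-10}\delta_\s$ is actually slightly tighter than the paper's $4\delta_\s\cdot 2^{-10}$.
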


\begin{proof}
\begin{align*}
\leb\left(\UU^{\s\conc\fin} \cup \bigcup_d \UU^{\s\conc d}\right)[s] &= 4\delta_{\s\conc\fin} + 4\cdot\sum_d \delta_{\s\conc d}\\
&\le 4\delta_\s\cdot 2^{-12} + 4\cdot\sum_d \delta_\s\cdot 2^{-(12 + d)}\\
&= 4\delta_\s\cdot 2^{-10} < \delta_\s.\qedhere
\end{align*}
\end{proof}

\begin{cclaim}\label{claim:small cover of U}
At every stage~$s$, for any~$\s$, $\leb\left(\bigcup_{n\ge n_\s} \VV_{n,s}^\s\right) \le \delta_\s$.
\end{cclaim}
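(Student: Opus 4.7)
The plan is a direct computation using the quick-test inequality together with the defining conditions on $n_\s$. Since $\s \in F$, the sequence $\seq{\VV_n^\s}$ is drawn from the fixed enumeration of quick clopen Demuth tests, and therefore $\leb(\VV_{n,s}^\s) \le 2^{-2n}$ for every $n$ and $s$. Subadditivity and summing the geometric series yields
\[
\leb\Bigl(\bigcup_{n\ge n_\s} \VV_{n,s}^\s\Bigr) \;\le\; \sum_{n\ge n_\s} 2^{-2n} \;=\; \tfrac{4}{3}\cdot 4^{-n_\s}.
\]

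Next, I would invoke the choice of $n_\s$ made during the definition of the numerical parameters. There it was required that $n_\s > 10$ and that $2\cdot 2^{-n_\s} \le \delta_\s$. From the former, $4^{-n_\s} = 2^{-n_\s}\cdot 2^{-n_\s} \le 2^{-11}\cdot 2^{-n_\s}$, whence
\[
\tfrac{4}{3}\cdot 4^{-n_\s} \;\le\; 2^{-n_\s} \;\le\; \tfrac{1}{2}\delta_\s \;\le\; \delta_\s,
\]
which gives the claim.

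There is no real obstacle: the claim is precisely a sanity check that condition (b) in the definition of $n_\s$ (combined with the lower bound $n_\s > 10$) was chosen to dominate the tail mass of a quick test past level $n_\s$. The only point worth noting is that the claim is stated for ``any $\s$'', but is only nontrivial when $\s \in F$, since otherwise neither $\VV^\s$ nor $n_\s$ is defined (and the union may be read as empty).
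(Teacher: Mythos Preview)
Your proposal is correct and is exactly what the paper intends: its proof reads simply ``Immediate from choice of~$n_\s$,'' and you have supplied the routine computation behind that sentence, using the quick-test bound $\leb(\VV_{n,s}^\s)\le 2^{-2n}$ together with condition~(b) (and the lower bound on $n_\s$) from the definition of~$n_{\s\conc d}$. Your remark that the claim is only nontrivial for $\s\in F$ is also apt.
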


\begin{proof}
Immediate from choice of~$n_\s$.
\end{proof}

\begin{cclaim}\label{claim:U full size}
At every stage~$s$ when a node~$\s$ is accessible, $\leb\bigl(\UU^\s \cap (\WW^\s \cup \EE)\bigr)[s] \le \delta_\s$.
\end{cclaim}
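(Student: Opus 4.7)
The plan is to exploit the fact that accessibility of $\s$ at stage $s$ forces $\s^-$ to have been in Case 2 when attending to $\s$ at stage $s$, which directly bounds $\leb(\UU^\s - (\WW^\s \cup \EE))[s]$ from below. Coupled with an upper bound $\leb(\UU^\s)[s] \leq 4\delta_\s$, this delivers the required bound on the ``bad'' intersection by pure subtraction.

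First I would establish the auxiliary inequality $\leb(\UU^\s_s) \leq 4\delta_\s$ for every $\s$ with $|\s| \geq 1$ and every $s$, by induction on $s$. The only way $\UU^\s$ can gain measure is through a successful Case 1 action by $\s^-$: when such an action is executed in the second substage at some stage $s'+1$, the prescription of $\YY^\s$ and the $\YY^\rho_n$ at the corresponding decision stage $t \leq s'$ guarantees, via the defining equation
\[
\leb\Bigl(\UU^\s \cup \YY^\s \cup \bigcup_{\rho, n} \YY^\rho_{n} - \WW^\s - \EE\Bigr)[t] = 4\delta_\s
\]
together with the fact that the newly added $\YY$-pieces are chosen disjoint from $(\WW^\s \cup \EE)[t]$, that $\leb(\UU^\s_{s'+1}) = 4\delta_\s$. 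Between such updates, $\UU^\s$ can only shrink (measure departs to bins $\BB^{\s^-}(n,\rho)$, to children $\UU^{\s\conc d}$, or through resets triggered by changes to $\UU^{\s^-}$), so the bound persists. Before the first successful update, $\UU^\s_s = \emptyset$ and the bound is trivial.

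For the main claim, fix a stage $s$ at which $\s$ is accessible. If $\s = \seq{}$, then $F^\seq{} = \emptyset$ forces $\WW^\seq{} = \emptyset$, and $\leb(\UU^\seq{} \cap \EE)[s] = \leb(\EE_s) \leq 1/4 = \delta_\seq{}$ by Claim~\ref{lem:total bad measure}. Otherwise, accessibility of $\s$ propagates through the third substage of $\s^-$ at stage $s$, so $\s^-$ must have selected $\s$ as its child to attend to and fallen into Case 2 with respect to $\s$; this yields $\leb(\UU^\s - (\WW^\s \cup \EE))[s] > 3\delta_\s$. Combining with the auxiliary bound gives
\[
\leb\bigl(\UU^\s \cap (\WW^\s \cup \EE)\bigr)[s] = \leb(\UU^\s)[s] - \leb\bigl(\UU^\s - (\WW^\s \cup \EE)\bigr)[s] < 4\delta_\s - 3\delta_\s = \delta_\s,
\]
as required. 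The only delicate point is the auxiliary bound, which rests on checking that Case 1 prescribes measure transfers that leave $\leb(\UU^\s)$ equal to $4\delta_\s$ after each successful update; once that bookkeeping is confirmed, the remainder is a one-line subtraction.
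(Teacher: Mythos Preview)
Your proposal is correct and follows the same route as the paper: for the root you invoke Claim~\ref{lem:total bad measure} together with $\WW^{\seq{}} = \emptyset$, and for $|\s| > 0$ accessibility forces $\s^-$ into Case~2 when passing to $\s$, giving $\leb(\UU^\s - (\WW^\s \cup \EE))[s] > 3\delta_\s$, after which subtraction from the size bound finishes. The paper's proof is the single line ``immediate from the action of $\s^-$'' and defers your auxiliary inequality to the very next claim, stated there as the equality $\leb(\UU^\s_s) = 4\delta_\s$ whenever $\s$ is accessible; you have simply unpacked and reordered those two steps.
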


\begin{proof}
If~$\s = \seq{}$, this is Claim \ref{lem:total bad measure} together with the observation that~$F^\seq{}$ is empty, and thus so is~$\WW^\seq{}_s$.  For larger~$\s$, this is immediate from the action of~$\s^-$.
\end{proof}

\begin{cclaim}
At every stage~$s$ when a node~$\s$ is accessible, $\lambda(\UU^\s_s) = 4\delta_\s$.
\end{cclaim}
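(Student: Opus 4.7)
I would prove the claim by induction on $|\s|$. The base case $\s = \seq{}$ is immediate: $\UU^\seq{}_s = 2^\w$ for every $s$ and $\delta_\seq{} = 1/4$, so $\lambda(\UU^\seq{}_s) = 1 = 4\delta_\seq{}$.

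For $|\s| \ge 1$, suppose $\s$ is accessible at stage $s$. Then $\s^-$ must reach Case 2 of its Third Substage for the child $\tau = \s$ (it is only through Case 2 that the next accessible node is chosen and the stage continues), which gives $\leb(\UU^\s - (\WW^\s \cup \EE))[s] > 3\delta_\s > 0$; in particular $\UU^\s_s \ne \emptyset$. Since $\UU^\s_0 = \emptyset$, there is a most recent stage $t_1 < s$ at which $\UU^\s$ was redefined, and by minimality $\UU^\s$ is unchanged between $t_1 + 1$ and $s$. This redefinition was either (a) $\s^-$'s Second Substage completing a successful Case 1 test, or (b) some strict ancestor of $\s$ acting and setting $\UU^\s_{t_1 + 1} = \emptyset$. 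Alternative (b) would force $\UU^\s_s = \emptyset$, contradicting what we just observed, so (a) holds.

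In case (a), the formula prescribed in Case 1 at the earlier accessibility stage $t_0 \le t_1$ of $\s^-$ gives
\[ \UU^\s_{t_1 + 1} = \bigl(\UU^\s_{t_0} \setminus (\WW^\s \cup \EE)[t_0]\bigr) \cup \YY^{\s,t_0} \cup \bigcup_{\rho,n} \YY^\rho_{n,t_0}. \]
I would then verify that the three pieces on the right are pairwise disjoint and each disjoint from $(\WW^\s \cup \EE)[t_0]$: $\YY^{\s,t_0}$ is explicitly chosen from outside $\WW^{\s^-} \cup \EE$ and the immediate children of $\s^-$, so in particular outside $\UU^\s_{t_0} \supseteq \WW^\s_{t_0}$; each $\YY^\rho_{n,t_0}$ lies in $(\BB^{\s^-}(n,\rho) \setminus \VV^\rho_n)[t_0]$, and the bins by construction are disjoint from all $\UU^{\s^-\conc d}$, are kept free of $\EE$ by the End-of-Stage cleanup, and avoid $\VV^\rho_n$ which accounts for the corresponding slice of $\WW^\s_{t_0}$. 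With this disjointness in hand,
\[ \lambda(\UU^\s_{t_1 + 1}) = \lambda\bigl(\UU^\s_{t_0} \setminus (\WW^\s \cup \EE)[t_0]\bigr) + \lambda(\YY^{\s,t_0}) + \lambda\Bigl(\bigcup_{\rho,n} \YY^\rho_{n,t_0}\Bigr), \]
and the defining equation for $\YY^{\s,t_0}$ in Case 1 evaluates the right-hand side to exactly $4\delta_\s$. Since $\UU^\s$ does not change between $t_1 + 1$ and $s$, we conclude $\lambda(\UU^\s_s) = 4\delta_\s$.

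The main obstacle I foresee is the bookkeeping around the disjointness claims --- verifying that at stage $t_0$ the bin contents, the new measure $\YY^{\s,t_0}$, and the surviving part of $\UU^\s_{t_0}$ really partition cleanly against $\WW^\s$ and $\EE$. Once that is granted, the target measure $4\delta_\s$ is built directly into the constraint imposed on $\YY^{\s,t_0}$ during Case 1, so no additional arithmetic is needed.
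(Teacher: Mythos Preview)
Your approach is essentially the paper's, only spelled out in far more detail. The paper dispatches this in one line --- ``Immediate from construction and simultaneous induction with the next claim'' --- because the measure equation defining $\YY^\tau$ in Case~1 is rigged precisely so that the new $\UU^\tau$ has measure $4\delta_\tau$; your disjointness bookkeeping is exactly what makes that ``immediate'' honest.

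The one point worth flagging is the phrase \emph{simultaneous induction with the next claim}. Your argument tacitly assumes that a suitable $\YY^{\s,t_0}$ exists, i.e.\ that $\UU^{\s^-}$ contains enough free measure outside $\WW^{\s^-}\cup\EE$, the children, and the test tail. That is the content of the next claim, and its proof in turn uses $\leb(\UU^{\s^-}_s)=4\delta_{\s^-}$. Your induction on $|\s|$ handles this correctly (the inductive hypothesis supplies exactly $\leb(\UU^{\s^-})=4\delta_{\s^-}$, which is what the next claim needs), but you should say so explicitly: at the inductive step, first invoke the next claim's arithmetic using the hypothesis on $\s^-$ to guarantee $\YY^{\s,t_0}$ exists, and only then run your disjointness-and-measure computation. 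Otherwise a reader may worry the construction could stall before ever defining $\UU^\s_{t_1+1}$.

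One small caution on the disjointness: the bins are cleaned of $\EE$ only at end-of-stage, so $\YY^\rho_{n,t_0}\subseteq \BB^{\s^-}_{t_0}(n,\rho)$ is guaranteed disjoint from $\EE_{t_0-1}$, not necessarily $\EE_{t_0}$. This is harmless because the measure equation in Case~1 already subtracts $\EE$ at stage $t_0$, so whichever reading of ``move $\YY$ into $\UU^\tau$'' one takes, the resulting set has measure $4\delta_\s$; but your sentence ``are kept free of $\EE$ by the End-of-Stage cleanup'' slightly overstates what is available at the moment of choice.
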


\begin{proof}
Immediate from construction and simultaneous induction with the next claim.
\end{proof}

\begin{cclaim}
The sets~$\YY^\tau$ can always be chosen as described in the action of~$\s$.
\end{cclaim}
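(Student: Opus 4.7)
The plan is to bound from below the measure of the ``allowed region''
\[ R = \UU^\s_{s+1} - \left( \WW^\s \cup \bigcup_{n \ge n_\s} \VV_{n}^\s \cup \bigcup_d \UU^{\s\conc d} \cup \UU^{\s\conc \fin}\cup \EE\right)[s] \]
from which $\YY^\tau$ must be drawn, and to compare it with the much smaller measure that $\YY^\tau$ is actually required to supply. Since $\YY^\tau$ is just a clopen subset of $R$ with a specified measure, any positive gap in the right direction will allow the choice to be made.

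First I would observe that $\leb(\UU^\s_{s+1}) = 4\delta_\s$: in Case 1 of $\s$'s third substage we have $\UU^\s_{s+1} = \UU^\s_s$, and the value is given by the immediately preceding claim. Then I would estimate the total measure subtracted. Claim~\ref{claim:U full size} gives $\leb\bigl((\WW^\s \cup \EE) \cap \UU^\s\bigr)[s] \le \delta_\s$; Claim~\ref{claim:small cover of U} gives $\leb\bigl(\bigcup_{n \ge n_\s} \VV^\s_{n,s}\bigr) \le \delta_\s$; and the preceding union-bound claim gives $\leb\bigl(\UU^{\s\conc\fin}_s \cup \bigcup_d \UU^{\s\conc d}_s\bigr) \le \delta_\s$. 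Hence $\leb(R) \ge 4\delta_\s - 3\delta_\s = \delta_\s$.

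Next I would estimate what $\YY^\tau$ must supply. The sets $\UU^\tau$, the bins housing the various $\YY^\rho_n$, and $R$ sit in pairwise disjoint parts of $\UU^\s$: $\UU^\tau$ is inside some $\UU^{\s\conc d}$ or $\UU^{\s\conc\fin}$; the bins lie outside every $\UU^{\s\conc d}$; and $R$ avoids both. In particular, $R$ is automatically disjoint from $\WW^\tau \cup \EE$ because $\WW^\tau \subseteq \UU^\tau$ and $\EE$ is already removed from $R$. After shrinking $R$ to additionally avoid the previously chosen $\YY^\rho_n$, the measure equation collapses to a sum of disjoint measures, and the required $\leb(\YY^\tau)$ is at most $4\delta_\tau$.

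The key comparison then follows from the recursive definition of $\delta$: since $m_d, m_\fin > 5$, we have $\delta_\tau \le \delta_\s \cdot 2^{-12}$, so $4\delta_\tau$ is vastly smaller than $\delta_\s \le \leb(R)$. Moreover the total measure of the $\YY^\rho_n$ is at most $\leb(\WW^\tau_s) \le 4\delta_\tau$, so even after removing them from $R$ there is slack of at least $\delta_\s - 4\delta_\tau > 4\delta_\tau$, easily accommodating a clopen $\YY^\tau$ of the desired size. I expect the main obstacle to be the disjointness bookkeeping in the previous paragraph---especially confirming that $R$ really is disjoint from $\WW^\tau$---after which the measure inequality is immediate and carries enormous slack.
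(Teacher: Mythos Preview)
Your proposal is correct and follows essentially the same route as the paper: bound the measure removed from $\UU^\s$ by $3\delta_\s$ using Claims~\ref{claim:U full size}, \ref{claim:small cover of U}, and the children-union bound, leaving at least $\delta_\s$ of available measure, and then observe that $4\delta_\tau < \delta_\s$. Your extra paragraph on disjointness (which the paper omits) is more careful than needed, and your assertion that $R$ is disjoint from the bins is not quite right---bin contents need not lie in $\WW^\s$---but since you immediately shrink $R$ to avoid the $\YY^\rho_n$ anyway and still have slack $\delta_\s - 4\delta_\tau > 4\delta_\tau$, this does not affect the argument.
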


\begin{proof}
\begin{align*}
&\leb\left(\WW^\s \cup \bigcup_{n > n_\s} \VV_{n}^\s \cup \bigcup_d \UU^{\s\conc d} \cup \UU^{\s\conc\fin} \cup (\UU^\s \cap \EE)\right)[s]\\
&\leq\leb\bigl(\UU^\s \cap (\WW^\s \cup \EE)\bigr)[s] + \leb\left(\bigcup_{n \ge n_\s}\VV_{n,s}^\s\right) + \leb\left(\bigcup_d \UU^{\s\conc d} \cup \UU^{\s\conc\fin}\right)[s]\\
&\le 3\cdot\delta_\s.
\end{align*}
Since~$\leb(\UU^\s_s) = 4\delta_\s$, there is at least~$\delta_\s$ available measure to draw from for~$\YY^\tau$.  Since~$4\delta_\tau < \delta_\s$, this is sufficient.
\end{proof}

\begin{cclaim}
If~$\s$ is on the true path, then~$\s$ is accessible infinitely often.
\end{cclaim}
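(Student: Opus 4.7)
I will proceed by induction on $|\sigma|$. The base case $\sigma = \seq{}$ is immediate, as the root is accessible at every stage. For the inductive step write $\sigma = \pi\conc d$ with $\pi = f\uhr{|\sigma|-1}$, and suppose inductively that $\pi$ is accessible infinitely often. If $d \ne \fin$, then $\sigma$ is accessible infinitely often by the recursive definition of $f$, provided the least such outcome exists; that existence is the point of the companion Claim \ref{claim:tot outcome exists}, which I would prove by invoking strong jump-traceability of $A$ to obtain some $d^*$ for which $\bar{T}^{\pi\conc d^*}$ is a genuine $h_\pi$-trace of $\Psi_\pi^A$. Tests run through this $d^*$ always terminate (if $\alpha \prec A$ then $\alpha$ eventually appears in $T^{\pi\conc d^*}_z$; if $\alpha \not\prec A$ then $A$ eventually changes below $|\alpha|$), so no pending test blocks $\pi\conc d^*$ indefinitely, and some $d \le d^*$ is accessed infinitely often.

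The substantive case is $d = \fin$, where $g^{|\pi|}$ is partial. I would pick a stage $s_0$ past which (i) $\dom g^{|\pi|}$ no longer grows; (ii) $\UU^\rho$ has assumed its final value for every $\rho \preceq \pi$ (finitely many changes by Claim \ref{claim:U limit exists}, since $p(\rho)$ is defined, because $g^{\rho'}$ is total for every $\rho' \in F^\rho$ along the true path); and (iii) every bin $\BB^\rho(n,\rho')$ and every test component $\VV^{\rho'}_n$ contributing to $\WW^\sigma$ up to the threshold identified in \S\ref{sec:defining p} has settled. After $s_0$, whenever $\pi$ reaches its third substage it picks $\tau = \sigma$.

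Suppose for contradiction that Case 2 applies to $\sigma$ only finitely often. Then eventually every visit of $\pi$ triggers Case 1, starting a test of some $\alpha = A_s\uhr{r}$ on $I_\sigma \subseteq h^{-1}_{\rho'}(k)$, where $\rho'\conc d'$ is the largest element of $F^\sigma$; since $\rho'\conc d' \prec f$ with $d' \ne \fin$, the strengthened inductive hypothesis discussed below makes the test terminate. If it succeeds, the following stage's second substage refills $\UU^\sigma$ to uncovered measure $4\delta_\sigma$, and past $s_0$ the measure of $\WW^\sigma \cup \EE$ grows by less than $\delta_\sigma$ between consecutive visits of $\pi$, so Case 2 applies at the very next visit, contradicting the assumption. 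If the test fails, $\alpha$ has left $A$; but stability of $\UU^\pi$ and of the $\Gamma$-computations living on it bounds $r$ past $s_0$, so $A_s\uhr{r}$ can change only finitely often, ruling out indefinite failure.

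The main obstacle is the circular flavour of the argument: termination of tests on $I_\sigma$ requires that the trace index selected by $f$ at each shorter prefix is tracing-compatible, which is not a pure accessibility statement. I would resolve this by strengthening the induction to assert simultaneously that (a) $\sigma \prec f$ is accessible infinitely often, and (b) every test initiated by a true-path strategy eventually returns. These two statements feed each other level by level, with strong jump-traceability of $A$ entering essentially as the source of the tracing index at each level.
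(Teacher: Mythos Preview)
Your outline is broadly correct, but you have substantially over-engineered the $\fin$ case. The paper's argument there is two sentences: once $\dom g^{|\pi|}$ has stabilised, every time $\pi$ reaches its third substage it selects $\tau = \sigma$; the only obstruction to $\sigma$ then being accessible is that $\pi$ decides it wants to change $\UU^\sigma$, and this happens at most $p(\sigma)$ many times. That is the whole proof. You never invoke $p(\sigma)$ directly, and that omission is what forces you into the contradiction argument about Case~1 versus Case~2, the claim about how much $\WW^\sigma \cup \EE$ can grow between visits (which you have not actually justified), and the proposed strengthened simultaneous induction on accessibility and test-termination.

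The ``circular flavour'' you identify is real but already absorbed into the machinery: $p(\sigma)$ is defined precisely because every $g^\rho$ for $\rho \in F^\sigma$ is total along the true path, and Claim~\ref{claim:U limit exists} then gives the finite bound on changes to $\UU^\sigma$ outright. As for test termination, the construction is arranged so that when $\pi$ is next accessible the relevant tests have returned (this is the content of the accessibility clause in the third substage, which blocks an outcome $d$ while tests through $\bar T^{\pi\conc d}$ are pending). So there is no need to carry a second inductive hypothesis about tests returning; it is built into the definition of accessibility. Your approach would work if fully fleshed out, but the paper's route via $p(\sigma)$ is shorter and avoids the unproved auxiliary claims.
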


\begin{proof}
By induction on~$|\s|$.  The base case is trivial.

For $|\s| = e + 1$, if $f(e) \in \w$, this is by definition.  If $f(e) = \fin$, then let~$n$ be least such that~$g^e(n)\diverge$, and let~$s > e$ be a stage such that $g\uhr{n}$ has converged by~$s$.  Then at every stage after~$s$ when $\s^- = f\uhr{e}$ is accessible, if~$\s^-$ reaches substage 3, it will choose $\tau = \s$.  The only reason for~$\s$ not to be accessible is if~$\s^-$ decides it wants to change~$\UU^\s$, but this can happen at most~$p(\s)$ many times.
\end{proof}

\begin{cclaim}
If~$\s$ is on the true path, then for every $\rho \in F^\s$,~$g^\rho$ is total.
\end{cclaim}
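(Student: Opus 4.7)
The plan is to simply unwind the definitions. I would argue that this claim is essentially a tautology given how $F^\sigma$ and the true path $f$ were defined, and the observation that $g^{\pi\conc d}=g^{|\pi|}$.

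First I would fix $\rho \in F^\sigma$. By definition of $F^\sigma$, $\rho$ is a proper initial segment of $\sigma$ and $\rho$ has the form $\pi\conc d$ for some $d < \w$ (i.e.\ $\rho$ does not end in $\fin$); note $\pi = \rho^-$. Since $\sigma$ lies on the true path, $\sigma = f\uhr{|\sigma|}$, and since $\rho \subsetneq \sigma$, we also have $\rho = f\uhr{|\rho|}$. In particular, $f(|\pi|) = d$.

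Next I would invoke the definition of $f$: if $g^{|\pi|}$ were partial, then by construction $f(|\pi|)$ would equal $\fin$, contradicting $f(|\pi|) = d \in \w$. Hence $g^{|\pi|}$ is total. Since $g^{\rho} = g^{\pi\conc d} = g^{|\pi|}$, we conclude that $g^\rho$ is total, as required.

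There is no real obstacle here beyond bookkeeping; the content of the claim is entirely contained in the definitions of $F$, of the indexing convention $g^{\pi\conc d} = g^{|\pi|}$, and of the true path $f$. The genuine work — showing that the true outcome $f(e) = d$ is actually \emph{realised} (i.e.\ that the infinitary outcome is accessible infinitely often) when $g^e$ is total — was handled by the previous claim; here we only need the converse direction, namely that $f(e) \in \w$ forces $g^e$ total, which is literally the definition.
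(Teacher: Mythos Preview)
Your proof is correct and follows exactly the approach the paper intends: the paper's own proof is the single line ``Immediate from the definition of the true path,'' and you have simply spelled out that immediacy by unwinding the definitions of $F^\sigma$, the convention $g^{\pi\conc d}=g^{|\pi|}$, and the clause in the definition of $f$ that forces $f(|\pi|)=\fin$ whenever $g^{|\pi|}$ is partial.
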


\begin{proof}
Immediate from the definition of the true path.
\end{proof}

\begin{cclaim}\label{claim:tot outcome exists}
Suppose $\s = (f\uhr{e})$ is on the true path and~$g^e$ is total.  Then there is a~$d$ such that~$\s\conc d$ is infinitely often accessible.
\end{cclaim}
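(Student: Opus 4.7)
The plan is to identify, using strong jump-traceability, a canonical $d^*\in\w$ which must be eventually eligible at every $\sigma$-accessible stage, yielding a contradiction with the assumption that no $d$ is infinitely often accessible. First I verify that $h_\sigma$ is total: its definition requires $g^\rho$ total for each $\rho\in F^\sigma$ (guaranteed by the preceding claim, since $\sigma$ is on the true path) together with $g^{\sigma\conc d}=g^e$ for every $d$ (total by hypothesis), while the constraint $n_{\sigma\conc d}\ge d$ keeps the sum defining $h_\sigma(k)$ finite. Since $\Psi_\sigma^A$ is $A$-partial computable and $h_\sigma$ is an order function, strong jump-traceability of $A$ yields an $h_\sigma$-trace tracing $\Psi_\sigma^A$; as $\seq{\bar T^{\sigma\conc d}}_{d<\w}$ enumerates all $h_\sigma$-traces, I take $d^*$ to be the least $d$ with $\bar T^{\sigma\conc d}$ tracing $\Psi_\sigma^A$.

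Next I would observe that $\sigma$ chooses a finite child at infinitely many stages. Since $\sigma$ is accessible infinitely often (by the preceding claim) and $|\dom g^e_s|$ tends to infinity (as $g^e$ is total), there must be infinitely many consecutive pairs of $\sigma$-accessible stages between which $\dom g^e$ strictly grows, and at the later member of each such pair the construction chooses a finite $d$ rather than $\fin$. Assuming for contradiction that every $\sigma\conc d$ is accessible only finitely often, the chosen values of $d$ must tend to infinity, so in particular $\sigma$ chooses some $d>d^*$ at infinitely many stages; at each such stage $d^*$ fails one of the two eligibility conditions.

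The crux is to show that $d^*$ is in fact eligible at every sufficiently late $\sigma$-accessible stage, which yields the desired contradiction. Let $T$ exceed the last stage at which $\sigma\conc d^*$ or any extension of it is accessible; after $T$ no new tests can be initiated by $\sigma\conc d^*$ or an extension, so only the finitely many initiated by stage $T$ remain. For each pending test in $\bar T^{\sigma\conc d^*}$ of some $\alpha = A_t\rest n$ on input $z$: if $\alpha\not\subset A$ the test fails once $A$ moves away from $\alpha$; if $\alpha\subset A$, the axiom $\Psi_\sigma^\alpha(z) = \alpha$ enumerated when the test began gives $\Psi_\sigma^A(z)\converge = \alpha$, so by the choice of $d^*$, $\alpha$ enters $T^{\sigma\conc d^*}_z$ at some later stage and the test succeeds. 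For tests on $\Psi_{\sigma\conc d^*}$ waiting to begin, each is either eventually started once the required interval is defined or cancelled by some $\UU^\pi$-change. Thus past some $T'\ge T$ both eligibility conditions hold for $d^*$, forcing every finite-child choice past $T'$ to select some $d\le d^*$, contradicting the previous paragraph.

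The main obstacle I anticipate is the second eligibility condition: verifying that no test on $\Psi_{\sigma\conc d^*}$ remains stuck waiting to begin. Such a test's interval lives inside $h_{\sigma\conc d^*}^{-1}(k)$ for some $k$, and $h_{\sigma\conc d^*}$ is extended only when $\sigma\conc d^*$ itself is accessible; since we are assuming $\sigma\conc d^*$ is accessible only finitely often, the needed value of $h_{\sigma\conc d^*}$ might seem never to be reached. I expect that the construction's rule that a child cannot take action until its required boxes are set up forces any test pending at stage $T$ to need only a box size $k$ for which $h_{\sigma\conc d^*}$ was already defined by $T$, as the requesting extension was accessible then; pinning this down cleanly is the delicate point.
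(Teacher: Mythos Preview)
Your approach is essentially the paper's: use strong jump-traceability to pick $d'$ (your $d^*$) with $\bar T^{\sigma\conc d'}$ tracing $\Psi_\sigma^A$, argue that the finitely many pending tests associated with $\sigma\conc d'$ all eventually resolve, and conclude that some $d\le d'$ is visited infinitely often. The paper phrases the last step as a direct pigeonhole rather than a contradiction, and it notes explicitly that $\sigma$ reaches substage~3 at least every other time it is accessible (a point your ``$\dom g^e$ grows'' argument does not quite cover, since reaching substage~3 is a precondition for choosing any child at all); otherwise the arguments coincide.

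On your flagged obstacle: the paper dispatches it in one line by observing that $g^{\sigma\conc d'}=g^e$ and $g^\rho$ for $\rho\in F^{\sigma\conc d'}$ are all total, so the needed intervals are eventually defined. Your worry that the relevant intervals live in $h_{\sigma\conc d^*}^{-1}$ and hence require $\sigma\conc d^*$-accessibility to be extended is largely misplaced. The tests $\sigma\conc d^*$ itself initiates use $I_\tau$ (for $\tau$ a child of $\sigma\conc d^*$) and $J_{\sigma\conc d^*}(\rho,n,k)$; by the organisation of boxes these lie in $h_\sigma^{-1}$ and $h_{\rho^-}^{-1}$ respectively, with $\rho^-\subseteq\sigma$ --- and $\sigma$ (and its predecessors) are accessible infinitely often, so these $h$'s keep getting extended as the relevant $g$-values converge. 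As for tests initiated by strictly deeper extensions that do use boxes in $h_{\sigma\conc d^*}$, your own proposed resolution is the right one: the rule that a child cannot act until its required boxes are set up means any such test pending at stage $T$ already has its box-size available in $h_{\sigma\conc d^*}$ by stage $T$. So the obstacle you anticipated is not a genuine gap.
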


\begin{proof}
By assumption~$A$ is strongly jump traceable, so let~$d'$ be least such that~$\bar{T}^{\s\conc d'}$ traces~$\Psi_\s^A$.  Fixing a stage~$s_0$, note that there are only finitely many tests on~$\Psi_\s^A$ begun by extensions of~$\s\conc d'$ at stage~$s_0$, and no new test will be begun until~$\s\conc d'$ is again accessible.  Since~$\bar{T}^{\s\conc d'}$ is a trace, eventually these finitely many tests will return.

Similarly, there are only finitely many tests on~$\Psi_{\s\conc d'}^A$ which are waiting to begin.  Since $g^e = g^{\s\conc d'}$ and~$g^\rho$ for all~$\rho \in F^{\s\conc d'}$ are total, eventually the appropriate intervals will be defined.

Note by construction that~$\s$ reaches substage 3 at least every other time it is accessible.  Thus there is some~$s_1 \geq s_0$ when~$\s$ reaches substage 3 and~$\s\conc d'$ is a valid choice for~$\tau$.  At this stage, $\tau = \s\conc d''$ for some $d'' \leq d'$.

By pigeon-hole, there is some least $d \leq d'$ which is infinitely often accessible.
\end{proof}

\begin{cclaim}\label{claim:intersection nonempty}
The intersection $\bigcap (\UU^{f\uhr{e}} - \WW^{f\uhr{e}} - \EE) \,\,\,\Cyl{e \in \w}$ is non-empty.
\end{cclaim}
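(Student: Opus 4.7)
The plan is to exhibit the intersection as a decreasing nested sequence of nonempty closed (in fact compact) subsets of $2^\w$, whence by the finite intersection property the intersection is nonempty. Let $C_e = \UU^{f\uhr e} - \WW^{f\uhr e} - \EE$. Since by Claim \ref{claim:U limit exists} each $\UU^{f\uhr e}$ stabilises to a clopen set, and both $\WW^{f\uhr e}$ and $\EE$ are open, each $C_e$ is closed.

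For nestedness, suppose $X \in C_{e+1}$. Then $X \in \UU^{f\uhr{e+1}} \subseteq \UU^{f\uhr e}$ by the tree structure of the $\UU$'s, and $X \notin \EE$. To see that $X \notin \WW^{f\uhr e}$, observe that $F^{f\uhr e} \subseteq F^{f\uhr{e+1}}$, so that any $\rho \in F^{f\uhr e}$ and $n \ge n_\rho$ witnessing $X \in \WW^{f\uhr e}$ would also witness $X \in \WW^{f\uhr{e+1}}$, contradicting $X \in C_{e+1}$.

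The main task is non-emptiness of each $C_e$, which we will deduce by taking limits in Claim \ref{claim:U full size}. Write $\s = f\uhr e$. Pick a stage $s_0$ after which $\UU^\s$ has stabilised, and fix $N$ large. For any accessible stage $s \ge s_0$ with $s \ge N$ and large enough that $\VV^\rho_{n,s} = \VV^\rho_n$ for all $\rho \in F^\s$ and $n_\rho \leq n \leq N$, Claim \ref{claim:U full size} gives
\[
\leb\Bigl(\UU^\s \cap \Bigl(\textstyle\bigcup_{\rho \in F^\s,\, n_\rho \leq n \leq N} \VV^\rho_n \cup \EE_s\Bigr)\Bigr) \leq \delta_\s.
\]
Letting $s \to \infty$ along accessible stages and then $N \to \infty$, monotone convergence (for the increasing unions in $s$ and $N$) yields $\leb(\UU^\s \cap (\WW^\s \cup \EE)) \leq \delta_\s$. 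Since $\leb(\UU^\s) = 4\delta_\s$, the set $C_e$ has measure at least $3\delta_\s > 0$, so is nonempty.

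The delicate step is the passage to the limit in Claim \ref{claim:U full size}. One must carefully exploit the $\w$-c.a.\ convergence $\VV^\rho_{n,s} \to \VV^\rho_n$ on each finite block $n \le N$, combined with the stabilisation of $\UU^\s$, to push a stage-wise bound through a double limit. Once this is done, the compactness/finite intersection argument is routine.
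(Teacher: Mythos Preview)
Your proof is correct and follows the same approach as the paper's: establish that the sets $C_e$ are closed, nested, and nonempty (the latter via the measure bound of Claim~\ref{claim:U full size}), then invoke compactness of $2^\w$. Your double-limit argument ($s\to\infty$ then $N\to\infty$) makes explicit what the paper leaves implicit when it simply cites Claims~\ref{claim:small cover of U} and~\ref{claim:U full size}; in particular, your careful treatment shows that Claim~\ref{claim:small cover of U} is not strictly needed once the limit is handled properly.
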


\begin{proof}
By construction, $\UU^{f\uhr{e}} \supset \UU^{f\uhr{e'}}$ for $e' > e$, and $\UU^{f\uhr{e}} \cap \WW^{f\uhr{e}} \subseteq \WW^{f\uhr{e'}}$ for $e' > e$ by definition.  Thus $\seq{ \UU^{f\uhr{e}} - \WW^{f\uhr{e}} - \EE}$ is nested.  Further,~$\UU^{f\uhr{e}}$ is closed, and~$\WW^{f\uhr{e}}$ and~$\EE$ are open.  Thus $\UU^{f\uhr{e}} - \WW^{f\uhr{e}} - \EE$ is closed.  By Claims \ref{claim:small cover of U} and \ref{claim:U full size}, these are all non-empty, and so, by compactness of~$2^\w$, the intersection is non-empty.
\end{proof}

\begin{cclaim}
$X$ is Demuth random.
\end{cclaim}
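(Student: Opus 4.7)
The plan is to reduce Demuth randomness of $X$ to the statement that $X$ passes every quick clopen Demuth test $\seq{\VV^e_n}$ from the fixed enumeration for which $g^e$ is total, and then to locate, for each such $e$, a node on the true path that already guarantees this.

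First I would invoke Lemma~\ref{lem:clopen tests} together with the subsequent lemma on quick tests to reduce the problem to showing that for every $e$ with $g^e$ total, there exists an index $n_e$ such that $X \notin \VV^e_n$ for all $n \ge n_e$. Given such an $e$, the node $\s = f \uhr e$ lies on the true path (by Claim~\ref{claim:tot outcome exists} applied inductively together with the preceding claim), and hence $f(e) = d$ for some $d \in \w$, so $\s\conc d$ is also on the true path and belongs to $F$. By the definitions in the construction, $\seq{\VV^{\s\conc d}_n} = \seq{\VV^e_n}$, and the number $n_{\s\conc d}$ is defined.

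Next, I would take any $\tau$ on the true path with $\tau \supsetneq \s\conc d$ (e.g.\ $\tau = f \uhr{e'}$ for some $e' > e+1$). For such $\tau$ we have $\s\conc d \in F^\tau$, so by the definition of $\WW^\tau$,
\[
\UU^\tau \cap \bigcup_{n \ge n_{\s\conc d}} \VV^{\s\conc d}_n \;\subseteq\; \WW^\tau.
\]
Since $X$ is defined to lie in $\UU^\tau - \WW^\tau - \EE$ for every initial segment of $f$, it follows that $X \notin \VV^{\s\conc d}_n$ for any $n \ge n_{\s\conc d}$. Setting $n_e = n_{\s\conc d}$ completes the proof that $X$ passes the test $\seq{\VV^e_n}$.

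There is no real obstacle here beyond unpacking definitions: the heavy lifting has already been done in Claims~\ref{claim:U limit exists}--\ref{claim:intersection nonempty}, which ensure that $X$ is well-defined and that the sets $\UU^\tau$ along the true path stabilize to nonempty compact sets avoiding $\WW^\tau$ and $\EE$. The potential pitfall to double-check is the matching of indices: that the test component $\VV^{\s\conc d}_n$ used in the construction (and excluded from $X$'s neighborhood via $\WW^\tau$) is genuinely the same as the $n\tth$ component of the $e\tth$ quick clopen Demuth test in the fixed enumeration, which is immediate from the convention $\seq{\VV^\tau_n} = \seq{\VV^{|\s|}_n}$ for $\tau = \s\conc d$.
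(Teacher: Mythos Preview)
Your proof is correct and follows essentially the same approach as the paper: identify the node on the true path in $F$ corresponding to a given quick clopen Demuth test, and use that for any deeper $\tau$ on the true path, the tail of that test inside $\UU^\tau$ is contained in $\WW^\tau$, which $X$ avoids by definition. The paper is slightly more economical, taking $\tau = f\uhr{e+1}$ immediately above the $F$-node $f\uhr e$ (rather than a strictly deeper node as you do), but the substance is identical.
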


\begin{proof}
By definition of the true path and our choice of enumeration, every quick clopen Demuth test occurs as~$\seq{\VV_n^{f\uhr{e}}}$ for some $e > 0$ with $f(e-1) \neq \fin$.  Then $(f\uhr{e}) \in F^{f\uhr{e+1}}$, so
\[
\WW^{f\uhr{e+1}} \supseteq \UU^{f\uhr{e+1}} \cap \bigcup \VV_n^{f\uhr{e}} \,\,\,\Cyl{n_{f\uhr{e}} \leq n}.
\]
Since $X \in \UU^{f\uhr{e+1}} - \WW^{f\uhr{e+1}}$,~$X$ passes the Demuth test~$\seq{\VV_n^{f\uhr{e}}}$.
\end{proof}

\begin{cclaim}
The definition of~$\Gamma$ is consistent.
\end{cclaim}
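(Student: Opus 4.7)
The plan is to apply the criterion noted right after the definition of $\EE_s$: for $\Gamma$ to be consistent, it suffices that whenever a clopen $\CC$ is enumerated into $\dom\Gamma$ at stage $s$, we have $\CC \cap \EE_s = \emptyset$. The only axioms enumerated by the construction are of the form $\Gamma(\UU^\tau_{s+1}) = A_s\uhr{|\tau|}$, added in the Second Substage after a successful test. So the whole task reduces to showing $\UU^\tau_{s+1} \cap \EE_s = \emptyset$ at every such stage.

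Let $t < s$ be the previous stage at which $\s = \tau^-$ was accessible and initiated the returning test. By construction, $\UU^\tau_{s+1}$ is the set $\s$ planned at stage $t$:
\[
\bigl(\UU^\tau_t \setminus (\WW^\tau_t \cup \EE_t)\bigr) \cup \YY^\tau_t \cup \bigcup_{\rho, n} \YY^\rho_{n,t}.
\]
Each of these three summands is a clopen subset of $\UU^\s_t$, and each is chosen to be disjoint from $\EE_t$: the preserved piece explicitly, $\YY^\tau_t$ by the formula used to select it, and the bin contributions $\YY^\rho_{n,t}$ because the end-of-stage cleansing $\BB^\s_{s+1}(n,\rho) = (\BB^\s(n,\rho) - \EE)[s]$ ensures $\BB^\s_t(n,\rho) \cap \EE_{t-1} = \emptyset$. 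The core of the proof is to show that nothing in these three pieces enters $\EE$ during the interval $(t,s]$.

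This is where the $I_\tau$-test does the work. Recall that $\s$ chose $r$ at stage $t$ to be the maximum of $|\tau|$ and the longest length of any existing $\Gamma$-axiom assigning the value $A_t\uhr{r}$ on a clopen $\subseteq \UU^\s_t$, and initiated a test of $\alpha = A_t\uhr{r}$ on $I_\tau$. Since by convention the test counts as failed at the return stage $s$ if $\alpha \not\subset A_s$, successful return certifies $A_t\uhr{r} \prec A_s$. For any $\Gamma$-axiom $\Gamma(\CC')=\beta$ in $\dom\Gamma_s$ meeting $\UU^\tau_{s+1}$, a case analysis on the enumerating node of $\CC'$ shows that either $|\beta| \leq r$ (whence $\beta \prec A_t\uhr{r} \prec A_s$ and $\CC' \not\subseteq \EE_s$), or $\beta \not\prec A_t$ (whence $\CC' \subseteq \EE_t$, contradicting its overlap with $\UU^\tau_{s+1}$ since the latter is disjoint from $\EE_t$). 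Hence no axiom bad at stage $s$ meets $\UU^\tau_{s+1}$, and consistency follows.

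The main obstacle is the temporal gap between the planning stage $t$ and the acting stage $s$: $A$ may change and $\EE$ may grow in the interim, and one must rule out that the measure pre-allocated at $t$ has acquired bad axioms by $s$. The whole point of the $I_\tau$-test, together with the careful choice of $r$ bounding the existing $\Gamma$-computation lengths on $\UU^\s$, is exactly to certify that no $A$-change below position $r$ has occurred in this interval, which is precisely what the case analysis needs.
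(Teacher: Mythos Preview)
Your overall plan is right — you correctly identify the criterion (disjointness from $\EE_s$), the only place axioms are enumerated, and that $\UU^\tau_{s+1}$ was chosen at stage~$t$ disjoint from~$\EE_t$. But the heart of your argument, the ``case analysis on the enumerating node of $\CC'$'', is exactly where the work lies, and you do not carry it out. Your dichotomy ``$|\beta|\le r$ or $\beta\not\prec A_t$'' follows from the \emph{definition} of~$r$ only for axioms already present in~$\Gamma_t$; for an axiom enumerated at some stage $u\in(t,s]$ there is no reason either alternative should hold. So the case analysis must really be an argument that \emph{no} axiom is enumerated on $\UU^\tau_{s+1}$ during $(t,s]$ — and you never supply that.

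The paper's proof makes precisely this observation, and it is much simpler than the route through~$r$: since $\tau^-$ started the test at stage~$t$ and is not accessible again until stage~$s$, no descendant of~$\tau^-$ can act in the interval, and only descendants of~$\tau^-$ can enumerate axioms on measure inside $\UU^{\tau^-}_t$ (any shorter node acting would cancel the test). Hence the set of axioms touching $\UU^\tau_{s+1}$ is the same at stages~$t$ and~$s$. Disjointness from $\EE_t$ then says every such axiom maps to an initial segment of~$A_t$, and since the test succeeded, $A_t\uhr{|\tau|}=A_s\uhr{|\tau|}$, so the new axiom is compatible. Note that the paper does not use the specific value of~$r$ at all here — only that $|\alpha|\ge|\tau|$. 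The careful choice of~$r$ matters for the charging argument bounding $\leb(\EE)$, not for consistency; you have over-weighted its role.

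A minor point: your claim that $\YY^\rho_{n,t}$ is disjoint from $\EE_t$ via the end-of-stage cleansing actually only yields disjointness from $\EE_{t-1}$, as you yourself write. This does not break the argument once you have the ``no new axioms'' observation, but it is another place where your write-up is looser than it should be.
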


\begin{proof}
By construction, we only enumerate axioms during the second substage when we add measure to some~$\UU^\tau$.  In order for this measure to be added, a test of some~$\alpha$ with $|\alpha| \geq |\tau|$ must have been begun at some stage~$t$, and then successfully returned at stage $s \geq t$ when~$\tau^-$ was accessible.  By construction $\alpha \subset A_t$, and since this test has returned successfully, $\alpha \subset A_s$.  In particular, $A_t\uhr{|\tau|} = A_s\uhr{|\tau|}$.

The measure in~$\UU^\tau_{s+1}$, on which we define the new axiom, was chosen at stage~$t$ to be disjoint from~$\EE_t$.  Thus defining $\Gamma(\UU^\tau_{s+1}) = A_t\uhr{|\tau|}$ was consistent at stage~$t$.  By construction, $\tau^-$ was not accessible between stages~$t$ and~$s$, and thus no other computations were enumerated onto any of~$\UU^{\tau^-}_t$ between those stages, in particular none on~$\UU^\tau_{s+1}$.  Thus enumerating the computation $\Gamma(\UU^\tau_{s+1}) = A_t\uhr{|\tau|}$ is still consistent at stage~$s$.

The actual computation enumerated is $\Gamma(\UU^\tau_{s+1}) = A_s\uhr{|\tau|}$, but as observed above, $A_s\uhr{|\tau|} = A_t\uhr{|\tau|}$.
\end{proof}

\begin{cclaim}
$\Gamma^X = A$.
\end{cclaim}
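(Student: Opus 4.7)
The plan is to verify two properties of $\Gamma^X$ separately: (i) consistency --- every computation witnessed by $X$ carries an initial segment of $A$; and (ii) totality --- such computations witness arbitrarily long initial segments. Consistency is essentially immediate from $X \notin \EE$. Indeed, if a clopen $\CC$ satisfies $X \in \CC$ and $\Gamma(\CC) = \alpha$ with $\alpha \not\subset A$, then by the definition of $\EE$ we have $\CC \subseteq \EE$ at every sufficiently late stage, forcing $X \in \EE$ and contradicting the fact that $X \in \bigcap_e(\UU^{f\uhr{e}} - \WW^{f\uhr{e}} - \EE)$.

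For totality I would fix $e \geq 2$ and set $\tau = f\uhr{e}$. Since $\tau$ lies on the true path, every $g^\rho$ with $\rho \in F^\tau$ is total, so $p(\tau)$ is defined, and by Claim \ref{claim:U limit exists} the clopen set $\UU^\tau = \lim_s \UU^\tau_s$ exists. By the definition of $X$ together with Claim \ref{claim:intersection nonempty}, $X \in \UU^\tau$; in particular the limit set is nonempty. Examining the second substage of the construction, each time $\UU^\tau$ receives fresh measure at a stage $s$ the axiom $\Gamma(\UU^\tau_{s+1}) = A_s\uhr{e}$ is enumerated (this uses $|\tau| = e > 1$). Because $\UU^\tau$ is nonempty in the limit, there must be a final such stage $s^{*}$, so the axiom $\Gamma(\UU^\tau) = A_{s^{*}}\uhr{e}$ lies in $\Gamma$. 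Hence $\Gamma^X$ supplies the value $A_{s^{*}}\uhr{e}$ at length $e$, and by the consistency argument this value equals $A\uhr{e}$. Since $e \geq 2$ was arbitrary, $\Gamma^X = A$.

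The main obstacle --- already handled by the preceding claims --- is guaranteeing both that $\UU^\tau$ actually stabilises (so the last axiom stays in force and is never undermined by a subsequent change) and that $X$ genuinely lies in this stable limit clopen set rather than escaping into $\WW^\tau \cup \EE$. These are precisely the properties that the bookkeeping of bins, boxes, and the bounds $p(\s)$ was constructed to achieve, so once they are in hand the final identification $\Gamma^X = A$ reduces to the short argument above.
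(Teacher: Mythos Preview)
Your proof is correct and follows essentially the same approach as the paper: totality comes from $X \in \UU^{f\uhr{e}}$ together with the axiom $\Gamma(\UU^{f\uhr{e}}) = A_{s^*}\uhr{e}$, and correctness comes from $X \notin \EE$. The paper's own proof is the terse two-line version of your argument; you simply spell out more carefully why $\Gamma(\UU^{f\uhr{e}})$ is defined (via stabilisation and the final stage $s^*$) and correctly restrict to $e \geq 2$ since axioms are only enumerated for $|\tau| > 1$.
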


\begin{proof}
Since $X \in \UU^{f\uhr{e}}$ for all~$e$, and $|\Gamma(\UU^{f\uhr{e}})| = e$, $\Gamma^X$ is total.  Since~$X \not \in \EE$, $\Gamma^X = A$.
\end{proof}

Thus~$X$ is the desired Demuth random set, completing the proof of Theorem \ref{thm:SJT Below Demuth}.

%%%%%%%%%%%%%%%%%%%%%%%%%%%%%%%%%%%
%%%%%%%%%%%%%%%%%%%%%%%%%%%%%%%%%%%
\section{Proof of Theorem \ref{thm:SJT Not Base for Demuth Randomness}} \label{sec:base}
%%%%%%%%%%%%%%%%%%%%%%%%%%%%%%%%%%%
%%%%%%%%%%%%%%%%%%%%%%%%%%%%%%%%%%%

We enumerate a c.e.\ set $A$. To ensure that $A$ is strongly jump-traceable, we meet the following requirements:
\begin{description}
	\item[$N_e$] if $h_e$ is an order function, then $J^A$ has an $h_e$-trace $\seq{T^e_x}_{x<\w}$.
\end{description}
Here $\seq{h_e}$ is an effective list of all partial computable functions whose domain is an initial segment of $\w$ and which are nondecreasing on their domain, and $J^A$ denotes a universal $A$-partial computable function. 

We will find the following approximation to the use function helpful: if $J^A(x)\, [s]$ converges, we define $j_s(x)$ to be the use of the computation~$J^A(x)\, [s]$.  Otherwise, we define $j_s(x) = 0$.
%To meet $N_e$, we meet subrequirements:
%\begin{description}
%	\item[$N_{e,x}$] if $J^A(x)\converge$, then $J^A(x)\in T^e_x$. 
%\end{description} 

To ensure that~$A$ is not computable from an $A$-Demuth random set, we meet the following requirements:
\begin{description}
	\item[$P_e$] Every $X\in 2^\w$ such that $\Phi_e(X)=A$ fails an $A$-Demuth test $\seq{\UU_n}_{n<\w}$. 
\end{description}
This will be a single Demuth test shared by all~$P_e$.  To meet $P_e$, we meet subrequirements:
\begin{description}
	\item[$P_{e,m}$] If $\Phi_e(X)=A$ then there is some $n>m$ such that $X\in \UU_n$.  
\end{description}

To guess which functions $h_e$ are in fact order functions, we use a tree of strategies. To define the tree, we list the possible outcomes of each strategy (node on the tree). Let $\s$ be a node. If $\s$ works for $N_e$, then the possible outcomes of $\s$ are $\inff$ and $\fin$ (denoting whether or not~$h_e$ is an order). Otherwise, $\s$ has only one outcome. 

For a node $\s$, accessible at stage $s$ and of length $\le s$, we describe what actions $\s$ takes at stage $s$, and if $|\s|<s$, which outcome of $\s$ is next accessible.

%%%%%%%%%%%%%%%%%%%%%%%%%%%%%%%%%%%
\subsection{Strategy for $N_e$-requirements}
%%%%%%%%%%%%%%%%%%%%%%%%%%%%%%%%%%%

Let~$\ell_s(e)$ be the greatest~$n$ such that there is some $x \leq s$ such that $h_{e,s}(x)\converge = n$.

At stage~$s$,~$\s$ attends to all~$x$ such that~$h_e(x) < \ell_s(e)$: if $J^A(x)\converge = y\, [s]$ and $y \not \in T_{x,s}^\s$, then we enumerate~$y$ into~$T_x^\s$ and initialise all $\tau \supseteq \s\fin$.

Also, $\s$ aggregates restraint: for all $n<\ell_s(e)$, we let $R_s(\s,n)$ be the maximum of $j_s(x)$ where $h_{e}(x) \le n$.

Let $t<s$ be the last stage at which $\s\inff$ was accessible ($t=0$ if there is no such stage). If $\ell_s(e)> 2^{t+2}$ and $\ell_s(e) > 2^{n_t(\tau)}$ for every $\tau \supseteq \s\inff$ such that $n_t(\tau)$ is defined\footnote{$n_t(\tau)$ will be defined in the $P_{e,m}$-strategy}, let $\s\inff$ be accessible at stage $s$. Otherwise, $\s\fin$ is accessible at stage~$s$.%\footnote{$\Ack$ is Ackermann's function.}

%%%%%%%%%%%%%%%%%%%%%%%%%%%%%%%%%%%
\subsection{A Basic Strategy for $P_{e,m}$}
%%%%%%%%%%%%%%%%%%%%%%%%%%%%%%%%%%%

When we see measure that appears to compute~$A$ (using $\Phi_e$), we have two possible ways in which we can satisfy~$P_{e,m}$: we can cover that measure with our test, or we can change~$A$.  We employ a combination of the two.

First, we choose an unclaimed test component~$\UU_n$ with $n > m$ and a large~$y$.  We keep~$y$ out of~$A$.  We study the open set
\[
\VV_s = \{X \in 2^\w \mid \Phi^X_{e,s} \supseteq A_s\uhr{y+1}\}.
\]
While $\leb(\VV_s) \leq 2^{-n}$, we can cover it with~$\UU_n$.  When~$\VV_s$ grows to be too large, we can enumerate~$y$ into~$A_{s+1}$.  Then all of~$\VV_s$ is wrong (it is in~$\EE$, to use the notation of the previous proof), so~$P_{e,m}$ need no longer concern itself with it.  We can then empty~$\UU_n$, choose a new~$y$, and start again.  This can happen at most~$2^n$ many times (since at least $2^{-n}$ measure goes bad each time it happens), so we have a computable bound on the number of times we empty~$\UU_n$.

This strategy is insufficient, however, because the strongly jump-traceable strategies~$N_{e'}$ act to ensure the trace at~$x$ by putting restraint on~$A$.  If a higher priority strategy places restraint that prevents~$y$ from entering~$A$, it will interfere with the $P_{e,m}$-strategy.

Our response is to modify the strategy slightly.  If~$y$ is restrained from entering~$A$, we empty~$\UU_n$, choose a new large~$y$, and start again.  Every $N_{e'}$-strategy will only impose restraint for~$x$ at most~$h_e(x)$ many times, so eventually this stops occurring.

It would seem that we have just constructed a strongly jump traceable c.e.\ set~$A$ which is not computable from a Demuth random, in contradiction with the previous theorem.  There is a complication, however, in the bound on the number of changes to~$\UU_n$; specifically, how many~$x$ are there with higher priority?

When a $P_{e,m}$-strategy is initialised, it chooses a test component~$\UU_n$ to work with.  This indicates that it will enumerate at most~$n$ many elements~$y$ into~$A$.  This then determines which $x$ are higher priority than~$P_{e,m}$; those pairs $(e', x)$ such that~$h_{e'}(x)$ is large enough to tolerate~$n$ many changes are lower priority, and the rest are higher.  Thus this choice of~$n$ would seem to indicate how many higher priority pairs there are.  However, we will not actually know how many such~$x$ there are until all the~$h_{e'}$ have grown sufficiently large.

To guess which functions $h_{e'}$ are in fact order functions, we use a tree of strategies.  The first time a $P_{e,m}$-strategy is accessible, the value~$n$ is chosen.  We will not let this strategy be accessible again until every~$h_{e'}$ which it guesses to be an order grows large relative to~$n$ (this is why we required $\ell_s(e) > n_t(\tau)$ in the $N_e$-strategy in order for~$\s\inff$ to be accessible).  So the second time the $P_{e,m}$-strategy is accessible, we know how many higher priority $(e', x)$ pairs there are, and thus what the bound on the number of changes to~$\UU_n$ is.  Unfortunately, this means that if a $P_{e,m}$-strategy is accessible precisely once, the computable bound we define will not be defined at~$n$.

Our strategy then is to define the bound on the number of changes to~$\UU_n$ to be~$0$ when the $P_{e,m}$-strategy is first accessible.  The second time the strategy is visited, we cause a change in~$A$ and redefine the bound to be whatever we now know it should be.  Because our redefinition accompanied a change in~$A$, the resulting function is $A$-computable.  Hence $\seq{\UU_n}$ will be an $A$-Demuth test.  Indeed, the only part of our $A$-test which requires the oracle is the bound on the number of changes.

%%%%%%%%%%%%%%%%%%%%%%%%%%%%%%%%%%%
\subsection{The Full Strategy for $P_{e,m}$}
%%%%%%%%%%%%%%%%%%%%%%%%%%%%%%%%%%%

$\s$ is associated with a \emph{test component} $n_s(\s)>m$, a \emph{coding marker} $x_s(\s)$ and a \emph{witness} $y_s(\s)$. These become undefined whenever~$\s$ is initialised.  Whenever~$\s$ changes the definitions of any of these or undefines them, all~$\tau \supset \s$ are initialised.
Let $s_0$ be the stage at which $\s$ was last initialised.  There are three cases.

%%%%%%%%%%%%%%%%%%%%%%%%%%%%%%%%%%%
\subsubsection*{Case 1} $n_s(\s)$ is not defined.
%%%%%%%%%%%%%%%%%%%%%%%%%%%%%%%%%%%

We set $n_s(\s)$ and $x_s(\s)$ to be large and pass to the next accessible node. 

%%%%%%%%%%%%%%%%%%%%%%%%%%%%%%%%%%%
\subsubsection*{Case 2} $n_s(\s)$ and $x = x_s(\s)$ are defined and $x \not \in A$
%%%%%%%%%%%%%%%%%%%%%%%%%%%%%%%%%%%

Let $b = 2^{s_0+2}$. If there is some $\beta\inf\subseteq \s$ such that $\beta$ works for some $N_d$\nbd requirement\footnote{\label{footnote}Since~$\s$ is accessible, we know that $\ell_s(d)$ is greater than~$b$ and~$k$, and thus that~$R_s(\beta,b)$ and~$R_s(\beta,k)$ are defined.}\addtocounter{footnote}{-1}\addtocounter{Hfootnote}{-1}, and $R_s(\beta,b)\ge x_s(\s)$, then we undefine both $x_s(\s)$ and $n_s(\s)$.
%\footnote{We note that for all $\beta\inf\subseteq\s$ such that $\beta$ works for some $N_d$, $k<\ell_s(d)$, and so $R_s(\beta,k)$ is defined. This is because for all $\alpha$ involved in the sum defining $k$, $n_s(\alpha)$ was defined at or before the last stage $t$ at which $\beta\inf$ was accessible.}
Otherwise, we enumerate $x$ into $A$.  Either way, we then pass to the next accessible node.

%%%%%%%%%%%%%%%%%%%%%%%%%%%%%%%%%%%
\subsubsection*{Case 3} $n = n_s(\s)$ and $x = x_s(\s)$ are defined and $x \in A$.
%%%%%%%%%%%%%%%%%%%%%%%%%%%%%%%%%%%

Let $k = 2^{n+2}$. If there is some $\beta\inf\subseteq \s$ such that $\beta$ works for some $N_d$\nbd requirement\footnotemark, and $R_s(\beta,k)\ge y_s(\s)$ (or if $y_s(\s)$ is not defined), then we set $y_s(\s)$ to be some large number and declare $\UU_n = \emptyset$.  

Let
\[ \VV_s(\s) = \left\{ X\in 2^\w\,:\, \Phi_{e,s}^X(x) \supseteq A_s\uhr{y+1} \right\}.\]
If $\lambda \VV_s(\s)\ge 2^{-n}$, we: 
\begin{itemize}
	\item enumerate $y$ into $A$, and declare $y_s(\s)$ to be undefined;
	\item declare $\UU_n = \emptyset$. 
\end{itemize}
Otherwise, we declare $\UU_n = \VV_s(\s)$.  We then pass to the next accessible node.

%%%%%%%%%%%%%%%%%%%%%%%%%%%%%%%%%%%
\subsection{Construction}
%%%%%%%%%%%%%%%%%%%%%%%%%%%%%%%%%%%

We build a tree of strategies by devoting each level to a single requirement.  Every strategy at level~$2e$ is devoted to the $N_e$-requirement, while every strategy at level $2\langle e, m\rangle + 1$ is devoted to the $P_{e,m}$-requirement.

At stage~$s$, we begin by letting the root be accessible and then proceed to let every accessible node~$\s$ with $|\s| < s$ act in order of length.  At the end of stage~$s$, for every~$\s$ with $|\s| < s$, we let~$x_{s+1}(\s) = x_s(\s)$, $y_{s+1}(\s) = y_s(\s)$ and $n_{s+1}(\s) = n_s(\s)$.

%%%%%%%%%%%%%%%%%%%%%%%%%%%%%%%%%%%
\subsection{Verification}
%%%%%%%%%%%%%%%%%%%%%%%%%%%%%%%%%%%

We perform the verification as a sequence of claims.

\begin{cclaim}\label{lem:bounding enumeration}
	Let $\s$ work for some $P_{e,m}$. Let $t>s$, and suppose that $n = n_s(\s) = n_t(\s)$. Then between stages $s$ and $t$, $\s$ enumerates at most $2^{n_s(\s)}$ many witnesses into~$A$. 
\end{cclaim}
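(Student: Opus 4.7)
The plan is to count: each witness enumeration by $\s$ consumes at least $2^{-n}$ worth of measure, and the sets consumed turn out to be pairwise disjoint, so at most $2^n$ witnesses can be enumerated in the interval $[s,t]$.

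First, I would list the stages $s_1 < s_2 < \dots < s_k$ in $[s,t]$ at which $\s$ enumerates a witness into $A$, and set $y_i = y_{s_i}(\s)$. Inspection of Case~3 shows that enumeration occurs exactly when $\leb \VV_{s_i}(\s) \ge 2^{-n}$, and that immediately afterwards $y_s(\s)$ becomes undefined; at the next visit of $\s$ to Case~3 the first branch (triggered because $y_s(\s)$ is undefined) resets $y_s(\s)$ to a fresh large number, so $y_1 < y_2 < \dots < y_k$. I would also remark that if $n_s(\s)$ were to become undefined at some intermediate stage (which can happen only through Case~2), then the subsequent redefinition in Case~1 would choose a value strictly larger than any parameter used so far, contradicting $n_t(\s) = n$; hence $n_s(\s) = n$ throughout $[s,t]$.

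The heart of the argument is showing that $\VV_{s_1}(\s), \dots, \VV_{s_k}(\s)$ are pairwise disjoint. Fix $i < j$ and suppose, for contradiction, that $X \in \VV_{s_i}(\s) \cap \VV_{s_j}(\s)$. From $X \in \VV_{s_i}(\s)$ we obtain $\Phi^X_{e,s_i}(y_i) \converge = A_{s_i}(y_i) = 0$; since a converged $\Phi_e$-computation with oracle $X$ remains converged with the same value, $\Phi^X_{e,s_j}(y_i) = 0$. On the other hand, $X \in \VV_{s_j}(\s)$ forces $\Phi^X_{e,s_j}$ to extend $A_{s_j}\uhr{y_j+1}$, and since $y_i < y_j$ and $y_i$ entered $A$ at stage $s_i < s_j$, we have $A_{s_j}(y_i) = 1$. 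This forces $\Phi^X_{e,s_j}(y_i) = 1$, a contradiction.

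Combining, $k \cdot 2^{-n} \le \leb\bigl(\bigcup_{i=1}^k \VV_{s_i}(\s)\bigr) \le 1$, so $k \le 2^n = 2^{n_s(\s)}$, as required. I expect the only genuinely interesting step to be the disjointness calculation via the monotonicity of $\Phi_e$-computations; the rest is bookkeeping from the case structure of the $P_{e,m}$-strategy.
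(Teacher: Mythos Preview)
Your proposal is correct and follows essentially the same approach as the paper: both arguments observe that each enumeration is triggered by a set $\VV$ of measure at least $2^{-n}$, and that these sets are pairwise disjoint because a previously enumerated witness $y_i$ satisfies $\Phi_e^X(y_i)=0$ on the earlier $\VV$ but $\Phi_e^X(y_i)=1$ on any later one. The only cosmetic difference is that you index by the stages at which witnesses are \emph{enumerated}, whereas the paper indexes by the stages at which witnesses are \emph{chosen}; your indexing is arguably cleaner.
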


\begin{proof}
	Let $s_0\le s$ be the stage at which the location $n= n_s(\s) = n_{s_0}(\s)$ was chosen. Let $s_1<s_2<\dots$ be the stages, after stage $s_0$, at which a new witness $y_i = y_{s_i}(\s)$ is chosen. Since each $y_i$ is chosen large, we have $y_1<y_2<\dots$. 
	
	Let $\VV_i = \VV_{s_{i+1}}(\s)$. We claim that if $n_{s_{i+1}}(\s)=n$ (so in particular, if ${s_{i+1}}<t$), then $\VV_i$ is disjoint from every $\VV_j$ for $j<i$. This is because for all $X\in \VV_j$ we have $\Phi_e^X(y_j)=0$, as $y_j\notin A_{s_{j+1}}$, but for all $X\in \VV_i$ we have $\Phi_e^X(y_j)=1$, as $y_j\in A_{s_{i+1}}$. 
	
	Since, for all $j$ such that $s_{j+1}$ is defined, we have $\lambda \VV_j \ge 2^{-n}$, we see that $s_{2^n+1}$ cannot exist.
\end{proof}

\begin{cclaim}\label{lem:negative bound 1}
Let~$\s$ work for some~$N_e$, and suppose that $m < \ell_s(e)$.  Then there are fewer than~$m$ many stages $s' \geq s$ at which some~$\tau \supseteq \s\inff$ enumerates an element into~$A$ below $R_{s'}(\s,m)$.
\end{cclaim}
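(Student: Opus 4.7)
My plan is to use the explicit restraint checks in the $P_{e,m'}$\nbd strategy definition together with the ``large'' choice of witnesses and markers to constrain when an enumeration below $R_{s'}(\s,m)$ can occur.

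Suppose $\tau\supseteq \s\inff$ enumerates $y$ into $A$ at stage $s'\ge s$ with $y<R_{s'}(\s,m)$. Then $y$ is either the coding marker $x_{s'}(\tau)$ enumerated in Case~2 or the witness $y_{s'}(\tau)$ enumerated in Case~3. In Case~2, the enumeration condition, applied with $\beta=\s$, yields $R_{s'}(\s,2^{s_0+2})<y$, where $s_0$ is $\tau$'s most recent initialisation stage before $s'$. Since $R_{s'}(\s,\cdot)$ is nondecreasing in its second argument, the inequality $y<R_{s'}(\s,m)$ forces $2^{s_0+2}<m$. In Case~3, the reset test similarly gives $R_{s'}(\s,2^{n+2})<y$ and hence $2^{n+2}<m$, where $n=n_{s'}(\tau)$.

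Next, I would observe that each enumerated $y$ was chosen ``large'' at some earlier stage $t\le s'$ (either when $\tau$ was last initialised, for a coding marker, or when $y$ was most recently reset, for a witness). Since $m<\ell_s(e)\le\ell_t(e)$, the value $R_t(\s,m)$ is defined, and the largeness of $y$ ensures $y>R_t(\s,m)$. Combined with $y<R_{s'}(\s,m)$, this shows that $R(\s,m)$ strictly grew past the fresh threshold $y$ between stages $t$ and $s'$.

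The remaining and most delicate step is the final count, and this is where I expect the main obstacle to lie. My plan is induction on $m$. The base case $m=1$ is immediate from the first paragraph: $2^{s_0+2},2^{n+2}\ge 4>1$ implies $y>R_{s'}(\s,1)$, contradicting $y<R_{s'}(\s,1)$, so no enumeration can occur. For the inductive step, each growth of $R(\s,m)$ past a fresh $y$ at stage $s'$ is triggered by some $j(x)$ jumping past $y$ for an $x$ with $h_e(x)\le m$; if $h_e(x)<m$ the event is already counted at restraint level $R(\s,m-1)$ and controlled by the inductive hypothesis, and if $h_e(x)=m$ a direct charging to an $A$\nbd change event for such an $x$ should contribute at most one additional stage. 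Careful bookkeeping of this recursion between enumerations into $A$ and subsequent restraint growth should then yield the desired bound of fewer than $m$ stages.
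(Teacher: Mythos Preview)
Your first paragraph is correct and matches the paper: the restraint checks in Cases~2 and~3 force $2^{s_0+2}<m$ (for a coding marker) and $2^{n+2}<m$ (for a witness, with $n=n_{s'}(\tau)$).  From this point on, however, the paper's argument diverges sharply from yours, and your proposed count has a genuine gap.

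The paper does not track restraint growth or argue by induction on~$m$ at all.  It uses only the two inequalities above, together with two structural facts, to count directly.  For coding markers: since the priority tree is at most binary branching and $|\tau|<s_0$, at most $2^{s_0}$ strategies~$\tau$ can have a given last-initialisation stage~$s_0$, and each contributes one marker; summing over the finitely many $s_0$ with $2^{s_0+2}<m$ gives $\sum 2^{s_0}<m/2$.  For witnesses: because $n_{s'}(\tau)$ is always chosen large, a given value of~$n$ occurs at most once in the entire construction, and Claim~\ref{lem:bounding enumeration} bounds the witnesses enumerated for that~$n$ by~$2^n$; summing over~$n$ with $2^{n+2}<m$ gives $\sum 2^n<m/2$.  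Total: fewer than~$m$.

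Your inductive scheme, by contrast, is circular.  You propose to bound enumerations below $R(\s,m)$ by first bounding growth events of $R(\s,m)$, and to bound those by changes to $j(x)$ for $x$ with $h_e(x)\le m$.  But bounding how often $j(x)$ changes is exactly the content of Claim~\ref{lem:negative bound 2}, whose proof \emph{uses} the present claim.  Moreover, the number of inputs~$x$ with $h_e(x)\le m$ is not bounded in terms of~$m$ alone --- it depends on~$h_e$ --- so splitting by whether $h_e(x)<m$ or $h_e(x)=m$ does not obviously yield a bound of~$m$; the phrase ``a direct charging \dots should contribute at most one additional stage'' is where the argument would have to do real work, and no mechanism for it is given.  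Your second paragraph (restraint growth between stages~$t$ and~$s'$) is not needed for the paper's direct count, and indeed introduces its own wrinkle since the choice stage~$t$ may precede~$s$.
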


\begin{proof}
Such elements come in two sorts: $x_{s'}(\tau)$ and $y_{s'}(\tau)$.  We count these separately.

By construction, in order for $x_{s'}(\tau) < R_{s'}(\s,m)$ to be enumerated into~$A$ at stage~$s'$, it must be that $2^{s_0+2} < m$, where~$s_0 < s'$ is the last stage at which~$\tau$ was initialised.  But since $|\tau| < s_0$ and the priority tree is at most binary branching, there are at most $2^{s_0}$ many strategies~$\tau$ which were initialised at stage~$s_0$.  Thus a bound on the number of such~$x_{s'}(\tau)$ is
\[
\sum_{2^{s_0+2} < m} 2^{s_0} < m/2.
\]

By construction, in order for~$y_{s'}(\tau) < R_{s'}(\s,m)$ to be enumerated into~$A$ at stage~$s'$, it must be that $2^{n+2} < m$, where $n = n_{s'}(\tau)$.  Since strategies always choose their~$n$ large, the same~$n$ never occurs more than once.  For a fixed~$n$, by Claim \ref{lem:bounding enumeration}, at most~$2^n$ many witnesses are enumerated.  Thus a bound on the number of such~$y_{s'}(\tau)$ is
\[
\sum_{2^{n+2} < m} 2^n < m/2.
\]

So there are fewer than~$m$ many such elements enumerated in total.  Since no element is enumerated more than once, there are fewer than~$m$ many such stages.
\end{proof}

\begin{cclaim}\label{lem:negative bound 2}
	Let $\sigma$ work for some $N_e$.  Let $t > s$, and suppose that $h_{e,s}(x)\converge < \ell_s(e)$, $\s$ is not initialised between stages $s$ and $t$.  Then between stages $s$ and $t$, at most $h_{e}(x)$ many elements are enumerated into $T_x^e$.
\end{cclaim}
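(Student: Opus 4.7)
The plan is to count additions to $T^\sigma_x$ by tracking changes in the value of $J^A(x)$, and then invoke Claim~\ref{lem:negative bound 1}. By construction, $\sigma$ enumerates a new element into $T^\sigma_x$ at a stage $s'$ only when $J^A(x)[s']\converge$ with a value not already present in $T^\sigma_{x,s'}$, so the values added are pairwise distinct. Hence if $k$ additions occur during $(s,t]$, there must be at least $k-1$ changes in the value of $J^A(x)$ between consecutive visits of $\sigma$, and each such change requires an enumeration into $A$ below the use $j_{s'}(x)$ at some stage $s'\in(s,t]$. Because $h_{e,s}(x)\converge$ and $h_e$ is a partial computable function, $h_e(x)=h_{e,s}(x)$ is permanent; and since $x$ itself trivially satisfies $h_e(x)\le h_e(x)$, we have $j_{s'}(x)\le R_{s'}(\sigma,h_e(x))$ whenever the use is defined.

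The task therefore reduces to bounding the number of stages $s'\in(s,t]$ at which $A$ receives an enumeration below $R_{s'}(\sigma,h_e(x))$. Claim~\ref{lem:negative bound 1} applied with $m=h_e(x)<\ell_s(e)$ yields fewer than $h_e(x)$ such stages contributed by strategies $\tau\supseteq\sigma\inff$. What remains is to rule out enumerations from strategies $\tau$ not extending $\sigma\inff$ during $(s,t]$. A proper $P$-ancestor $\tau$ of $\sigma$ that enumerates necessarily changes its $x_\tau$ or $y_\tau$, thereby initialising every strategy properly extending $\tau$ --- in particular $\sigma$ --- contradicting the hypothesis. Extensions of $\sigma\fin$ are re-initialised each time $\sigma$ itself acts on its trace; once re-initialised they pick fresh witnesses chosen large enough to exceed the prevailing value of $R(\sigma,h_e(x))$, and this restraint can grow only in response to further $A$-changes, which are themselves already being counted by Claim~\ref{lem:negative bound 1}.

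Putting this together gives $k-1<h_e(x)$, hence $k\le h_e(x)$ as required. The main obstacle is precisely the handling of $\sigma\fin$-extensions in the previous paragraph: one must verify carefully that the ``large witness'' convention, together with the explicit initialisation of $\sigma\fin$-extensions at every action of $\sigma$, suffices to preclude such strategies from ever enumerating below $R(\sigma,h_e(x))$ except via $A$-changes already bounded by Claim~\ref{lem:negative bound 1}. Everything else is a straightforward reduction to that earlier claim.
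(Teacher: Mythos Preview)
Your overall plan matches the paper's: count insertions into $T^\s_x$, note that between consecutive insertions at stages $s_i<s_{i+1}$ some strategy must enumerate into $A$ below $j_{s_i}(x)$, and then argue that only $\tau\supseteq\s\inff$ can do this, so Claim~\ref{lem:negative bound 1} finishes. But two things go wrong in your execution.

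First, you omit two cases. A $P$-strategy $\tau$ to the \emph{left} of $\s$ could be accessible between $s_i$ and $s_{i+1}$; you need to observe that its accessibility initialises $\s$, contrary to hypothesis. And a $P$-strategy strictly to the \emph{right} of $\s$ (not extending $\s$ at all) is not covered by ``proper $P$-ancestor'' or ``extension of $\s\fin$''; it needs the same treatment as the $\s\fin$ case.

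Second, your handling of $\s\fin$-extensions has a genuine gap. You argue that such $\tau$ are reinitialised at $s_i$, pick witnesses larger than the ``prevailing'' $R(\s,h_e(x))$, and that this restraint ``can grow only in response to further $A$-changes, which are themselves already being counted by Claim~\ref{lem:negative bound 1}.'' That last claim is false: $R_s(\s,m)=\max\{j_s(x'):h_e(x')\le m\}$ can increase simply because some new computation $J^A(x')$ converges with a large use, with no $A$-change at all. Moreover, appealing back to Claim~\ref{lem:negative bound 1} here is circular --- you are in the middle of showing that the injuries all come from $\s\inff$-extensions. The fix is to drop the detour through $R$ entirely. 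The quantity you actually need to compare against is $j_{s_i}(x)$, which is a \emph{fixed number} once $s_i$ is fixed. Since $\tau\supseteq\s\fin$ (and likewise any $\tau$ to the right of $\s$) is initialised at stage $s_i$, any witness it later enumerates was chosen after $s_i$ and is therefore larger than $j_{s_i}(x)$. That one-line observation replaces your entire paragraph about restraint growth and closes the argument cleanly.
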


\begin{proof}
Let $s_1 < s_2 < \dots$ be the stages between $s$ and $t$ at which~$\s$ enumerates an element into~$T_x^e$.  Then by construction, $J^A(x)\, [s_i] \neq J^A(x)\, [s_{i+1}]$, and so between stages~$i$ and~$i+1$ some accessible node~$\tau$ must have enumerated an element into~$A$ below $j_{s_i}(x)$.

By assumption, $\tau \not \subset \s$.  If~$\tau$ is to the left of~$\s$, then when~$\tau$ was accessible,~$\s$ would have been initialised, contrary to hypothesis.  If~$\tau$ is to the right of~$\s$ or $\tau \supseteq \s\fin$, then~$\tau$ was initialised at stage~$s_i$, and so the element~$x_{s'}(\tau)$ or~$y_{s'}(\tau)$ which was enumerated would have been chosen after stage~$s_i$, and thus would be larger than~$j_{s_i}(x)$.

So it must be that $\tau \supseteq \s\inff$.  But the number of stages at which this can happen is less than~$h_e(x)$ by Claim \ref{lem:negative bound 1}.  Thus there can be no~$s_{h_e(x)+1}$.
\end{proof}

\begin{cclaim}\label{lem:effectively open sets}
Let~$\s$ be working for some $P_{e,m}$-requirement.  Let $t > s$ be such that $n = n_s(\s) = n_t(\s)$.  Then if~$\UU_n$ is not declared empty between stages~$s$ and~$t$, $\UU_{n,s} \subseteq \UU_{n,t}$.
\end{cclaim}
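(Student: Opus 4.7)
The aim is a monotonicity statement about the declarations $\UU_n \leftarrow \VV_{s'}(\s)$ that $\s$ makes during non-emptying passes through Case 3. The plan is to reduce to showing $\VV_{s'}(\s) \subseteq \VV_{t'}(\s)$ for $s' \le t'$ the most recent such declaration stages at or before $s$ and $t$ respectively, with the cases where no such $s'$ or $t'$ exists being trivial. Writing $y = y_s(\s)$, this containment will follow from two ingredients: the monotonicity of the functional approximation, $\Phi_{e,s'}^X \subseteq \Phi_{e,t'}^X$ for every $X$; and the constancy $A_{s'}\uhr{y+1} = A_{t'}\uhr{y+1}$. Given both, any $X \in \VV_{s'}(\s)$ satisfies $\Phi_{e,t'}^X \supseteq \Phi_{e,s'}^X \supseteq A_{s'}\uhr{y+1} = A_{t'}\uhr{y+1}$, whence $X \in \VV_{t'}(\s)$.

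The real work is in establishing the constancy of $A\uhr{y+1}$. I would first show that $\s$ is not initialised anywhere in $[s,t]$: otherwise the next Case 1 action would assign a fresh, distinct value of $n$, contradicting $n_s(\s) = n_t(\s)$. Combined with the hypothesis forbidding empty declarations, this implies that $y$ is constant in $[s,t]$, that $y$ is not enumerated into $A$ there, and that no proper ancestor $\pi \subsetneq \s$ changes its own parameters during $[s,t]$. So it remains to rule out an enumeration into $A$ below $y+1$ by some other strategy.

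I would then classify the candidates. Strategy $\s$ itself is in Case 3 throughout (since $x_s(\s) \in A$), so never enumerates its own $x$. A proper extension $\pi \supsetneq \s$ was initialised at the moment $y_s(\s)$ was last defined, erasing any previously defined parameters, and any parameters it chooses afterwards are fresh, hence above $y$. The delicate case is a proper ancestor $\pi \subsetneq \s$: its parameters are frozen, so the only admissible enumeration is of $x_\pi$ in its own Case 2, and this could a priori occur because $x_\pi \le y$ (having been defined before $y$ was chosen large). The resolution, which I expect to be the main obstacle, relies on the order of actions within a single stage: because $\pi$ acts before $\s$ whenever $\s$ is accessible, at stage $s'$ either $x_\pi$ is already in $A$ or $\pi$ enumerates it at $s'$ itself (the alternative of undefining $x_\pi$ would initialise $\s$, which we have excluded). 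Hence $x_\pi$ is in $A$ when $\s$ computes $\VV_{s'}(\s)$, and by permanence remains so at $t'$, so its contribution is identical in $A_{s'}\uhr{y+1}$ and $A_{t'}\uhr{y+1}$, completing the proof.
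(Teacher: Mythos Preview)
Your overall architecture is the same as the paper's: reduce to constancy of $A\uhr{y+1}$ on the relevant interval, then classify the possible enumerating strategies. However, your case analysis is incomplete. You treat only three cases for a potential enumerator $\pi$: $\pi=\s$, $\pi\supsetneq\s$, and $\pi\subsetneq\s$. You have omitted the strategies $\pi$ that are \emph{incomparable} with $\s$ on the tree, and these are precisely the cases the paper's proof spends its effort on. For $\pi$ to the left of $\s$, the point is that $\pi$ being accessible at all in the interval forces an initialisation of $\s$, contradicting $n_s(\s)=n_t(\s)$. For $\pi$ to the right of $\s$, one argues (as you did for proper extensions) that $\pi$ was initialised when $\s$ last set $y$, so any parameter $\pi$ later selects is fresh and hence exceeds $y$. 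Without these two cases your argument does not establish $A_{s'}\uhr{y+1}=A_{t'}\uhr{y+1}$.

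On a minor point, your treatment of proper ancestors is actually more careful than the paper's. The paper simply asserts that an enumeration by $\pi\subsetneq\s$ initialises $\s$, but this is not literally what the construction says for the Case~2 enumeration of $x_\pi$ (only parameter \emph{changes} trigger initialisation). Your worry is legitimate, and your resolution is essentially right, though it can be sharpened: by the stage~$v<s'$ at which $\s$ enumerated its own $x_\s$, the ancestor $\pi$ already had to be in Case~2 or Case~3, so $x_\pi\in A_{v+1}\subseteq A_{s'}$; thus the ``$\pi$ enumerates $x_\pi$ at $s'$ itself'' branch never actually occurs, and the timing question you flag does not arise.
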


\begin{proof}
By hypothesis, $\UU_{n,t} = \VV_t(\s)$, $\UU_{n,s} = \VV_s(\s)$, $y = y_s(\s) = y_t(\s)$ and $y \not \in A_t$.  If $\VV_s(\s) \not \subseteq \VV_t(\s)$, then $A_s\uhr{y+1} \neq A_t\uhr{y+1}$.  So some element less than~$y$ was enumerated into~$A$ by some accessible strategy~$\rho$ between stages~$s$ and~$t$.

If $\rho \subset \s$ or $\rho$ is to the left of~$\s$, then~$\s$ would have been initialised between stages~$s$ and~$t$ when~$\rho$ was accessible, contradicting $n_s(\s) = n_t(\s)$.

If $\rho \supset \s$ or $\rho$ is to the right of~$|s$, then~$\rho$ would have been initialised when~$\s$ chose~$y$, so any values chosen by~$\rho$ would be larger than~$y$.
\end{proof}

\begin{cclaim}\label{lem:bounding empty declarations}
There is an $A$-computable total function~$g(n)$ bounding the number of times~$U_n$ is declared empty.
\end{cclaim}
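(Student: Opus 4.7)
The plan is to identify the unique strategy that owns $U_n$, bound the two sources of empty-declarations arising from Case 3 of that strategy, and convert the resulting formula into an $A$-computable function using the permitting trick flagged in the informal discussion preceding the full $P_{e,m}$-strategy.

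Since Case 1 chooses each $n_s(\s)$ large, and $U_n$ is declared empty only within Case 3 of a strategy holding $n$, at most one strategy $\s = \s(n)$ ever acts on $U_n$; if none does, take $g(n) := 0$. Otherwise, the declarations fall into two streams. The \emph{measure stream}, consisting of enumerations of a witness $y$ into $A$ when $\leb(\VV_s(\s)) \ge 2^{-n}$, contributes at most $2^n$ events by Claim~\ref{lem:bounding enumeration} applied over the interval on which $\s$ holds $n$. The \emph{restraint stream}, consisting of resets triggered by $R_s(\beta,k) \ge y_s(\s)$ for some $\beta\inf \subsetneq \s$ with $k = 2^{n+2}$, is bounded as follows: the fresh witness chosen after each such reset exceeds every active $R_\cdot(\beta,k)$, so the next $\beta$-driven reset forces $R_\cdot(\beta,k)$ to strictly exceed its previous maximum. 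Hence the number of $\beta$-driven resets is at most $R^*(\beta,k) := \lim_s R_s(\beta,k)$.

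Next I would verify that each $R^*(\beta,k)$ is finite and $A$-computable. The $N_{e_\beta}$-gatekeeping of $\beta\inf$'s accessibility forces $\ell_s(e_\beta) > k$ before $\s$ can execute Case 3 (cf.\ footnote~\ref{footnote}), so the set $S_\beta = \{x : h_{e_\beta}(x) \le k\}$ has by then stabilised to a finite initial segment readable from the simulation; then $R^*(\beta,k) = \max_{x \in S_\beta} j^A(x)$ is $A$-computable because $A$ is c.e. A natural candidate for the bound is
\[
g(n) \;=\; 2^n + 1 + \sum_{\beta\inf \subsetneq \s(n)} R^*(\beta,k).
\]

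Finally, to promote this to a genuine $A$-computable total function, I would exploit the synchronisation of $g$-updates with $A$-changes: the marker $x_{s_0}(\s)$ chosen in Case 1 is enumerated into $A$ precisely when, and only when, Case 2 fires successfully at $\s$'s second accessibility, and no other strategy can enumerate it since it was chosen large. Given oracle $A$ and input $n$, one simulates the (fully computable) construction for stages $s \le n$, which suffices because $n \ge s_0$ if $n$ is chosen at all, to locate the candidate $\s$ and its marker $x_0$, and then queries $A(x_0)$. If $x_0 \notin A$ then no Case 3 ever fires for $n$ (either $n$ was never picked, or $\s$ was initialised before its second visit, or Case 2 undefined $n$), so output $0$. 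Otherwise, simulate forward to the enumeration stage $s_1$, extract the finitely many $\beta\inf \subsetneq \s$ together with their now-stabilised $S_\beta$, compute the corresponding $R^*$'s using $A$, and output the displayed formula. The chief obstacle the plan hinges on is precisely this $A$-computability clause: the naive ``wait until everything settles'' definition of $g$ is only $\emptyset''$-computable, and synchronising the update with the $A$-enumeration of $x_0$ is what brings it down to $A$.
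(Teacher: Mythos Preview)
Your overall plan matches the paper's: locate the unique $\s(n)$, split the empty-declarations into the measure stream (bounded by $2^n$ via Claim~\ref{lem:bounding enumeration}) and the restraint stream, and then use the coding marker $x_0$ to make the case split $A$-decidable. The gap is in your bound on the restraint stream.

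From $R_{t_{i+1}}(\beta,k) \ge y_i > R_{t_i}(\beta,k)$ you correctly obtain a strictly increasing sequence of values of $R_\cdot(\beta,k)$ at the $\beta$-reset times, but you then conclude that the number of resets is at most $R^*(\beta,k) := \lim_s R_s(\beta,k)$. This inference fails because $R_s(\beta,k)$ is not monotone: a spurious computation $J^{A_s}(x)$ with large use can make $j_s(x)$, and hence $R_s$, temporarily exceed its eventual limit before being destroyed by a later $A$-change below that use. Thus the $R_{t_i}$ are bounded only by $\sup_s R_s(\beta,k)$, not by the limit. Worse, your formula $R^* = \max_{x\in S_\beta} j^A(x)$ is not $A$-computable, since deciding whether $J^A(x)$ converges (and hence whether $j^A(x)$ is a genuine use or should be taken as $0$) is not $A$-decidable; if every $J^A(x)$ for $x\in S_\beta$ diverges, your formula gives $R^*=0$ while resets may still occur.

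The paper sidesteps this by bounding the restraint stream not by any \emph{value} of $R$ but by the number of \emph{changes}: each $\beta$-reset forces some $j_\cdot(x)$ with $x\in S_\beta$ to take a new value, and Claim~\ref{lem:negative bound 2} shows each such $x$ contributes at most $h_{e_\beta}(x)\le 2^{n+2}$ new trace entries (hence new computations). This yields the bound $2^{n+2}\cdot\#\{x:h_{e_\beta}(x)\le 2^{n+2}\}$ per $\beta$, which is outright computable from the data available at $\s$'s second accessible stage; the oracle $A$ is needed only for the single bit $x_0\in A$.
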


\begin{proof}
By construction, if~$n$ is not selected by some $P_{e,m}$-strategy by stage~$n$, it will never be selected, and thus~$g(n)$ can be set to 0.

Otherwise, let~$\s$ be the $P_{e,m}$-strategy which selects~$n$, let $s$ be the stage at which~$\s$ selects~$n$, and let $x = x_s(\s)$.  Note that by construction, if~$\s$ is accessible at stage $t > s$, $\ell_t(d) > 2^{s_0+2}$ and $\ell_t(d) > 2^{n+2}$ for all $N_d$-strategies $\beta\inff \subseteq \s$.

If $x \not \in A$, there are two possibilities: either $\s$ was never again accessible after stage~$s$, or $x$ and~$n$ were undefined before the next time~$\s$ was accessible after stage~$s$.  In both cases, $g(n) = 0$ suffices.

If $x \in A$, then~$\s$ was accessible at some stage $t > s$.  At this stage, for every $N_{d}$-strategy $\beta$ with $\beta\inff \subseteq \s$, we can compute $\#\{ x \mid h_{d}(x) \leq 2^{n+2}\}$.  By Claim \ref{lem:negative bound 2}, each such~$x$ can cause~$R(\beta,2^{n+2})$ to change at most~$2^{n+2}$ many times.

By construction, whenever~$\UU_n$ is declared empty, either a new~$y$ was chosen because the previous~$y$ was below some restraint, or because the previous~$y$ was enumerated into~$A$.  We can use the previous paragraph to bound the first number, and Claim \ref{lem:bounding enumeration} to bound the second.  Thus if~$x \in A$,
\[
g(n) = 2^n + \sum_{\tau\inf \subseteq \s} 2^{n+2}\cdot \#\{x \mid h_{d}(x) \leq 2^{n+2}\}
\]
suffices.
\end{proof}

\begin{cclaim}
$\seq{\UU_n}$ is an $A$-Demuth test.
\end{cclaim}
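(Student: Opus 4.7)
The plan is to verify two things: (i) each $\UU_n$ is an effectively open subset of $2^\w$ of measure at most $2^{-n}$, and (ii) there is an index function $f$ for $\seq{\UU_n}$ whose mind-change count is bounded by an $A$-computable function.

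For (i), I inspect the construction: at every stage $s$, $\UU_{n,s}$ has only ever been assigned either $\emptyset$ or $\VV_t(\s)$ for the unique strategy $\s$ which selected $n$ and some $t\le s$. Case~3 only performs the latter assignment when $\lambda(\VV_s(\s))<2^{-n}$. Each $\VV_s(\s)$ is clopen since $\Phi_{e,s}^X$ uses only finite oracle, so $\UU_{n,s}$ is clopen of measure strictly less than $2^{-n}$, and $\UU_n = \lim_s \UU_{n,s}$ is open with $\lambda(\UU_n) \le 2^{-n}$.

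For (ii), the key idea is to repackage the monotone growth supplied by Claim~\ref{lem:effectively open sets} into a single c.e.\ index per ``epoch''. For each $n$, partition the stage axis into maximal intervals separated by the stages at which $\UU_n$ is declared empty; by Claim~\ref{lem:effectively open sets}, within each such epoch the clopen set $\UU_{n,s}$ grows monotonically with $s$. Using the s-m-n theorem, I allocate a fresh c.e.\ index $e^n_i$ for the $i\tth$ epoch and enumerate into $W_{e^n_i}$ precisely the strings defining $\UU_{n,s}$ as $s$ ranges over that epoch; during stages when $\UU_n = \emptyset$ I use a fixed index for the empty c.e.\ set. Setting $f_s(n)$ to be the current epoch-index gives a (uniformly) computable approximation with $\Cyl{W_{f_s(n),s}} = \UU_{n,s}$ for all $s,n$, so in the limit $\Cyl{W_{f(n)}} = \UU_n$.

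It then remains to bound $m_{\seq{f_s}}(n)$, which is the main point. Since $f(n)$ changes only at boundaries between epochs, and each empty declaration creates at most two such boundaries (one when $\UU_n$ becomes empty, and at most one when it next leaves the empty state), Claim~\ref{lem:bounding empty declarations} gives $m_{\seq{f_s}}(n) \le 2g(n)+1$, an $A$-computable bound. Hence $f$ is $\w$-c.a.\ relative to $A$, so $\seq{\UU_n}$ is an $A$-Demuth test. The only mild subtlety is making sure that within-epoch growth does not force $f(n)$ to change, which is exactly what the monotonicity of Claim~\ref{lem:effectively open sets} and our deliberate use of a c.e.\ (rather than clopen) index buys us.
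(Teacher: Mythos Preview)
Your proposal is correct and follows exactly the route the paper takes: the paper's proof consists solely of citing Claims~\ref{lem:effectively open sets} and~\ref{lem:bounding empty declarations}, and you have simply unpacked how those two claims combine to yield an $\w$-c.a.-relative-to-$A$ index function together with the measure bound. There is no divergence in approach, only in the level of detail you supply.
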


\begin{proof}
Claims \ref{lem:effectively open sets} and \ref{lem:bounding empty declarations}.
\end{proof}

Define the True Path in the usual fashion.

\begin{cclaim}\label{lem:finite initialisation}
Every strategy along the true path is initialised only finitely many times.
\end{cclaim}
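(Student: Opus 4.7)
The plan is to prove the claim by induction on the length of the true-path node~$\tau$, with the strengthened hypothesis that $\tau$ is initialised only finitely often and, when $\tau$ is of $P_{e,m}$-type, the parameters~$n_s(\tau), x_s(\tau), y_s(\tau)$ eventually stabilise. The root is never initialised. For the inductive step, let $\sigma = \tau^-$ and fix a stage~$s_0$ beyond which: no proper prefix of~$\tau$ is ever initialised; every proper $P_{e',m'}$-prefix of~$\tau$ has stable parameters; every node to the left of~$\tau$ that is accessed only finitely often is no longer accessed; and for every $N_{e'}$-ancestor $\beta$ of $\tau$ with $\beta\fin \subseteq \tau$, the quantity $\ell_s(e')$ has reached its final value~$L_\beta$ (this exists because the accessibility condition for~$\beta\inff$ forces $\ell_s(e') \leq 2^{t+2}$ whenever~$\beta\inff$ is not accessed after stage~$t$).

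After~$s_0$, initialisations of~$\tau$ can come only from $\sigma$'s own actions when it is accessed. If~$\sigma$ is an $N_e$-strategy with $\tau = \sigma\inff$, no initialisation occurs, since $\sigma$ only initialises extensions of~$\sigma\fin$. If $\sigma$ is an $N_e$-strategy with $\tau = \sigma\fin$, then $\ell_s(e) = L_\sigma$ for $s \geq s_0$, so $\sigma$ attends only to the finite initial segment $X = \{x : h_e(x) < L_\sigma\}$; since~$\sigma$ itself is not initialised after~$s_0$, Claim~\ref{lem:negative bound 2} applies between $s_0$ and any later stage to bound the growth of each~$T_x^\sigma$ past~$s_0$ by~$h_e(x)$, yielding only finitely many new trace entries across~$X$, and hence only finitely many initialisations of~$\tau$. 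If $\sigma$ is a $P_{e,m}$-strategy, initialisations of~$\tau$ correspond to modifications of~$n_s(\sigma), x_s(\sigma), y_s(\sigma)$, each of which in Case 2 or Case 3 requires a restraint~$R_s(\beta, b) \geq x_s(\sigma)$ or~$\geq y_s(\sigma)$ for some $N_d$-ancestor~$\beta$ with~$\beta\inff \subseteq \sigma$. Applying the preceding case to each such~$\beta$, the traces~$T_x^\beta$ stabilise, so the uses~$j_s(x)$ stabilise for $x$ with $h_d(x)$ below the relevant threshold; the restraints thus shift only finitely often, and once stable, newly chosen large parameters of~$\sigma$ permanently avoid them, so Cases 2 and 3 cease to fire and $\sigma$'s parameters settle. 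Claim~\ref{lem:bounding enumeration} then bounds~$\tau$'s own enumerations once $n_s(\tau)$ itself settles.

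The chief obstacle is justifying that the restraints $R_s(\beta, b)$ stabilise in the $P_{e,m}$-case, which reduces to showing that the uses $j_s(x)$ are eventually bounded for the finitely many~$x$ with $h_d(x) \leq b$. One must argue that $A$-enumerations below these uses eventually cease, combining Claim~\ref{lem:negative bound 1} (which controls enumerations by nodes extending~$\beta\inff$) with the strengthened inductive hypothesis on the previously-handled true-path nodes, and the fact that descendants of~$\tau$ always choose witnesses exceeding the current uses after each initialisation---preventing an infinite cascade of reinitialisations from below and thereby closing the induction.
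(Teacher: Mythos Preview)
Your overall approach---an induction along the true path with the hypothesis strengthened to include stabilisation of the $P$-parameters---is exactly what the paper does (the paper compresses this into the single line ``for every~$\rho \subset \s$, $\rho$ will never again enumerate an element into~$A$'', and defers the justification to the next claim). Making the strengthened hypothesis explicit is a good idea.

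However, the inductive step is tangled in a way that leaves the main obligation unmet. In the case where $\sigma = \tau^-$ is a $P_{e,m}$-strategy, you expend all your effort re-deriving that $\sigma$'s parameters $n_s(\sigma), x_s(\sigma), y_s(\sigma)$ stabilise---but this is precisely your inductive hypothesis for $\sigma$, and should simply be cited. What is \emph{not} established anywhere is the strengthened conclusion for $\tau$ itself: when $\tau$ is a $P$-strategy, you must show that after its last initialisation, $n_s(\tau)$ and $x_s(\tau)$ eventually survive Case~2 (the restraint argument, applied to $\tau$'s $N$-ancestors $\beta$ with $\beta\inff \subseteq \tau$), and then that $y_s(\tau)$ survives Case~3 (the restraint argument again, plus Claim~\ref{lem:bounding enumeration} for the $\VV$-growth clause). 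Your final sentence ``Claim~\ref{lem:bounding enumeration} then bounds~$\tau$'s own enumerations once $n_s(\tau)$ itself settles'' assumes exactly the thing that needs proving.

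Two smaller errors compound the confusion. First, the assertion that every modification of $\sigma$'s parameters ``in Case~2 or Case~3 requires a restraint'' is false: in Case~3, $y$ is also undefined when $\leb(\VV_s(\sigma)) \geq 2^{-n}$, with no restraint involved; and Case~1 defines $n, x$ from scratch. Second, ``applying the preceding case to each such~$\beta$'' does not make sense as written: your preceding case handled $N$-strategies whose \emph{$\fin$} outcome is on the true path, whereas the $\beta$ you now need have $\beta\inff \subseteq \sigma$. What you actually want here is a direct appeal to Claim~\ref{lem:negative bound 2}, which applies to any $N$-strategy on the true path regardless of its outcome, once it is no longer initialised.
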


\begin{proof}
Proof by induction.

Let~$\s$ be along the true path, and let $s_0$ be a stage such that for every $\tau\fin \subseteq \s$ with $\tau$ an $N_e$-strategy, $\ell_s(e)$ will never change after stage~$s_0$, and for every~$\rho \subset \s$, $\rho$ will never again enumerate an element into~$A$.  Then by construction,~$\s$ will never again be initialised.
\end{proof}

\begin{cclaim}
Every strategy along the true path guarantees its requirement.
\end{cclaim}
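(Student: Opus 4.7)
The plan is to split into the two requirement types. Let $\sigma$ be on the true path. By Claim \ref{lem:finite initialisation} there is a last stage $s^*$ after which $\sigma$ is no longer initialised, and by the usual definition of the true path $\sigma$ is accessible cofinally often. The $N_e$-case is short, while the $P_{e,m}$-case has more moving parts.

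Suppose first that $\sigma$ works for some $N_e$. If $h_e$ fails to be an order function the requirement is vacuous, so assume $h_e$ is an order function; then $\ell_s(e) \to \infty$ and $\sigma\inff$ lies on the true path. At stage $s^*$ the trace $T_x^\sigma$ is emptied, and Claim \ref{lem:negative bound 2} bounds the number of elements entering $T_x^\sigma$ after $s^*$ by $h_e(x)$, giving $|T_x^\sigma| \le h_e(x)$. For any $x \in \dom J^A$, the c.e.\ approximation $J^A(x)\,[s]$ eventually stabilises at its true value, and the first stage past this point at which $\sigma$ is accessible forces that value into $T_x^\sigma$. Hence $\seq{T_x^\sigma}$ is an $h_e$-trace of $J^A$.

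Now suppose $\sigma$ works for some $P_{e,m}$, and assume toward a contradiction that $\Phi_e^X = A$ and $X \notin \UU_n$ for every $n > m$. The first step is to show that $n_s(\sigma)$, $x_s(\sigma)$ and $y_s(\sigma)$ eventually stabilise. Every $\beta\inff \subseteq \sigma$ is devoted to some $N_d$ with $h_d$ an order function, so for each bound $b$ only finitely many $x'$ satisfy $h_d(x') \le b$; each use $j_s(x')$ settles because $A$ is c.e., and hence $R_s(\beta, b)$ settles. Since $\sigma$ picks every replacement strictly larger than the previous, once every $R_s(\beta, b)$ that can interfere has stabilised no further refresh is triggered. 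Then the enumeration step of Case 2 puts $x$ into $A$, and $\sigma$ is permanently in Case 3.

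With $n$, $x$ and $y$ all stable, an argument in the spirit of Claim \ref{lem:bounding enumeration} shows that the event $\leb \VV_s(\sigma) \ge 2^{-n}$ occurs at most $2^n$ times: successive values of $\VV_s(\sigma)$ at the moments of enumeration are pairwise disjoint because they disagree on $\Phi_e^X(y)$. After the last such event, $\UU_n$ is declared equal to $\VV_s(\sigma)$ and never emptied again, growing monotonically by Claim \ref{lem:effectively open sets}. Since $\Phi_e^X = A$, we have $X \in \VV_s(\sigma)$ for all sufficiently large $s$, and hence $X \in \UU_n$ with $n > m$, the desired contradiction. I expect the main obstacle to be the joint stabilisation of $n_s(\sigma)$, $x_s(\sigma)$ and $y_s(\sigma)$: one must rule out the strategy undefining its witnesses infinitely often after $s^*$, which requires carefully coordinating the settling of the higher-priority restraints $R_s(\beta, b)$ with the successive large replacement choices.
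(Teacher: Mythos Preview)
Your overall approach is the same as the paper's: the $N_e$ case is handled via Claims \ref{lem:negative bound 2} and \ref{lem:finite initialisation}, and the $P_{e,m}$ case by arguing that $n_s(\sigma)$, $x_s(\sigma)$, $y_s(\sigma)$ stabilise (first via the restraints settling, then via the $2^n$ bound of Claim \ref{lem:bounding enumeration}) so that $\UU_n$ eventually covers every $X$ with $\Phi_e^X = A$.

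There is one concrete error. You write ``each use $j_s(x')$ settles because $A$ is c.e.'' This is false when $J^A(x')$ diverges: then $j_s(x')$ oscillates between $0$ and various nonzero values, and there is no reason for it to stabilise. What you actually need is not pointwise settling of $j_s(x')$, but only that $R_s(\beta,b)$ (and later $R_s(\beta,k)$) can exceed a freshly chosen large value of $x_s(\sigma)$ (respectively $y_s(\sigma)$) only finitely often. The paper obtains this by invoking Claim \ref{lem:negative bound 2}: a jump of $R_s(\beta,b)$ past the current large witness forces a new $J^A(x')$-computation for some $x'$ with $h_d(x')\le b$, and the trace bound limits the number of such new computations. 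Replace your appeal to ``$A$ is c.e.'' by this argument. You correctly flag the stabilisation of the parameters as the main obstacle; this is exactly where the fix is needed.

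Two smaller points of presentation. First, you assert ``with $n$, $x$ and $y$ all stable'' before giving the $2^n$ disjointness argument that establishes $y$ stable; reorder so that the Claim \ref{lem:bounding enumeration} bound is part of the \emph{proof} that $y$ stabilises, not something done afterwards. Second, the Case 3 restraint $R_s(\beta,k)$ (with $k = 2^{n+2}$) is a separate cause of redefining $y$, distinct from the Case 2 restraint $R_s(\beta,b)$; you should say explicitly that the same settling argument applies to it, as the paper does.
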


\begin{proof}
By construction, if~$h_e$ is an order,~$T_x^e$ traces~$J^A$.  By Claims \ref{lem:negative bound 2} and \ref{lem:finite initialisation},~$T_x^e$ is eventually smaller than the order~$h_e$, which suffices to meet the $N_e$-requirement.

Let~$\s$ be a $P_{e,m}$-strategy along the true path.  Let~$s_0$ be the final stage at which~$\s$ is initialised.  The next time~$\s$ is accessible after~$s_0$, we will choose an~$n$ and~$x$, and from then after never again consider Case 1.

By Claim \ref{lem:negative bound 2},~$R_s(\beta,b)$ will eventually stabilise for every~$\beta\inff \subseteq \s$.  Thus we will eventually enumerate some~$x$ into~$A$ and never again consider Case 2.

By Claim \ref{lem:negative bound 2} again,~$R_s(\beta,k)$ will eventually stabilise for every~$\beta\inff \subseteq \s$.  Thus we will eventually stop rechoosing~$y$ because of restraint.

By Claim \ref{lem:bounding enumeration}, we enumerate only finitely many of these~$y$ into~$A$.  After we have enumerated the last one, $\UU_n$ will cover all~$X$ which compute~$A$.
\end{proof}

This completes the proof of Theorem \ref{thm:SJT Not Base for Demuth Randomness}.

% \bibliographystyle{plain}
% \bibliography{../../../biblist}

\end{document}